\newcommand\textcode[1]{{\footnotesize \rm\texttt{#1}}}   
\renewcommand\subsubsection{\@startsection{subsubsection}{3}%
  \z@{.5\linespacing\@plus.7\linespacing}{-.5em}%
  {\normalfont\bfseries}} 
\newcommand\makequotestraight{%
\begingroup\lccode`~=`' 
\lowercase{\endgroup\let~}\textquotesingle
\catcode`'=\active
}
\newtheoremstyle{mythm}                   
{6pt}
{6pt}
{\it}
{}
{\bf}
{.}
{.5em}
{}
\newtheoremstyle{mydef}                   
{6pt}
{6pt}
{}
{}
{\bf}
{.}
{.5em}
{}
\newtheoremstyle{myrem}                   
{6pt}
{6pt}
{}
{}
{\bf}
{.}
{.5em}
{}
\theoremstyle{mythm}      
\newtheorem{theorem}{Theorem}[section]
\newtheorem{proposition}[theorem]{Proposition}
\newtheorem{lemma}[theorem]{Lemma}
\theoremstyle{mydef}
\theoremstyle{myrem}
\newtheorem{remark}[theorem]{Remark}
\numberwithin{equation}{section}
\newcounter{ithmcount}
\newenvironment{iprf}{\begin{list}{{\rm
	\alph{ithmcount})}}{\usecounter{ithmcount}\labelwidth-5pt
      \leftmargin0pt \topsep3pt \itemsep1pt \parsep2pt}}{\end{list}}
\renewcommand{\leq}{\leqslant} 
\newcommand{\PSL}{{\rm PSL}}
\newcommand{\PGL}{{\rm PGL}}
\newcommand{\PSU}{{\rm PSU}}
\newcommand{\GL}{{\rm GL}} 
\newcommand{\MM}{\mathbf{M}}
\newcommand{\BB}{\mathbf{B}}
\newcommand{\GG}{\mathbf{G}}
\newcommand{\QQ}{\mathbf{Q}}
\newcommand{\Gx}{2^{1+24}\udot{\rm Co}_1}
\newcommand{\mt}[1]{{\rm #1}}
\newcommand{\udot}{\mathpalette\udot@\relax}
\newcommand{\udot@}[2]{%
  \begingroup
  \sbox\z@{$#1{:}$}%
  \sbox\tw@{$#1{.}$}%
  \raisebox{\dimexpr\ht\z@-\ht\tw@}{$\m@th#1.$}%
  \endgroup
}
\begin{document}

\vspace*{-0.5cm}

\title{The maximal subgroups of the Monster}
\subjclass[2000]{} 
\author[H. Dietrich]{Heiko Dietrich}
\author[M. Lee]{Melissa Lee}
\author[T. Popiel]{Tomasz Popiel}
\address[Dietrich, Lee, Popiel]{School of Mathematics, Monash University, Clayton VIC 3800, Australia}
\email{\rm heiko.dietrich@monash.edu, melissa.lee@monash.edu, tomasz.popiel@monash.edu}
\keywords{finite simple groups, sporadic simple groups, Monster group, maximal subgroups}
\subjclass{Primary: 20D08. Secondary: 20E28.}
\date{\today}

\begin{abstract}
The classification of the maximal subgroups of the Monster $\MM$ is a long-standing problem in finite group theory. 
According to the literature, the classification is complete apart from the question of whether $\MM$ contains maximal subgroups that are almost simple with socle $\PSL_2(13)$. 
However, this conclusion relies on reported claims, with unpublished proofs, that $\MM$ has no maximal subgroups that are almost simple with socle $\PSL_2(8)$, $\PSL_2(16)$, or $\PSU_3(4)$. 
The aim of this paper is to settle all of these questions, and thereby complete the {solution to the maximal subgroup problem for $\MM$, and  for the sporadic simple groups as a whole.}
{Specifically, we prove the existence of} two new maximal subgroups of $\MM$, isomorphic to the automorphism groups of $\PSL_2(13)$ and $\PSU_3(4)$, and we {establish} that $\MM$ has no almost simple maximal subgroup with socle $\PSL_2(8)$ or $\PSL_2(16)$. {We also} correct the claim that $\MM$ has no almost simple maximal subgroup with socle $\PSU_3(4)$, {and provide evidence that the maximal subgroup $\PSL_2(59)$ (constructed in 2004) does not exist}. Our proofs are supported by reproducible computations carried out using the publicly available Python package \textcode{mmgroup} for computing with $\MM$ recently developed by M.~Seysen. {We provide} explicit generators for our newly discovered maximal subgroups of $\MM$ {in \textcode{mmgroup} format.}
\end{abstract}

\maketitle


{\centering \it Dedicated to Rob Wilson for his many contributions to the study of the sporadic finite simple groups. \par}

\thispagestyle{empty}

\section{Introduction}\label{sec_intro} 

\noindent The Monster $\MM$ is the largest of the 26 sporadic finite simple groups, and the only sporadic group for which the classification of the maximal subgroups remains open. 
The classification of the maximal subgroups of $\MM$ is mostly complete, due chiefly to significant efforts of S.\ P.\ Norton, R.\ A.\ Wilson, and Wilson's former student P.\ E.\ Holmes. 
As Wilson explains in his contribution to the proceedings of the ``Thirty years of the Atlas'' conference \cite[pp.~64--65]{W17}, at least 15 papers had been written on this topic as of 2017, but it remained to classify the (possible) maximal subgroups of $\MM$ that are almost simple with socle isomorphic to one of $\PSL_2(8)$, $\PSL_2(13)$, $\PSL_2(16)$, $\PSU_3(4)$, or $\PSU_3(8)$.  
These cases seemed largely resistant to theoretical arguments, while also posing significant computational difficulties. 
Nevertheless, Wilson \cite{W17b} then ruled out the case $\PSU_3(8)$, and reported that $\PSL_2(8)$, $\PSL_2(16)$, and $\PSU_3(4)$ had been eliminated in unpublished work; see \cite[p.~65]{W17} and \cite[p.~877]{W17b}. 
Based on this, it would seem that the only remaining task is to determine, up to conjugacy, all maximal subgroups of $\MM$ that are almost simple with socle $\PSL_2(13)$. 
However, it also seems prudent to check the unpublished cases $\PSL_2(8)$, $\PSL_2(16)$, and $\PSU_3(4)$. 
The aim of this paper is to settle all four cases, and thereby complete the classification of the maximal subgroups of $\MM$. 
Our results are as follows. 

\begin{theorem} \label{thmL213}
The Monster has a unique conjugacy class of maximal subgroups with socle isomorphic to $\PSL_2(13)$. 
These subgroups are isomorphic to $\PSL_2(13){:}2$ and have non-trivial intersection with precisely the conjugacy classes $2\mt{B}$, $3\mt{B}$, $4\mt{C}$, $6\mt{E}$, $7\mt{B}$, $12\mt{H}$, $13\mt{A}$, and $14\mt{C}$ of $\MM$.
\end{theorem}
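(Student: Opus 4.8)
The plan is to classify, up to $\MM$-conjugacy, all subgroups of $\MM$ that are almost simple with socle $L\cong\PSL_2(13)$; since $\Aut(\PSL_2(13))=\PGL_2(13)$, such a subgroup is isomorphic to $\PSL_2(13)$ or to $\PGL_2(13)=\PSL_2(13){:}2$. We would proceed in four stages: (i) restrict the possible fusion of $L$-classes into $\MM$-classes; (ii) construct an explicit copy $H\cong\PGL_2(13)$ in $\MM$ using \textcode{mmgroup}, read off its $\MM$-class fusion, and check $C_\MM(H)=1$ (and $C_\MM(L)=1$); (iii) prove that every copy of $\PSL_2(13)$ realising the surviving fusion extends to such a copy of $\PGL_2(13)$, and that $H$ is maximal; (iv) prove that all such maximal subgroups form a single conjugacy class.

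For the fusion in (i), an element of $L$ of order $n\in\{2,3,6,7,13\}$ lies in an $\MM$-class of the same order, and an element in the outer coset of $\PGL_2(13)$ has order $2$, $4$, $12$, or $14$. Requiring that the restriction to $L$ of the $196883$-dimensional character of $\MM$ be a genuine character, together with structure-constant identities and divisibility, narrows the candidates drastically. Here one also uses that $\PSL_2(13)$ lies in neither $13$-local maximal subgroup $N_\MM(13\mt{A})\cong(13{:}6\times\PSL_3(3))\udot2$ nor $N_\MM(13\mt{B})\cong13^{1+2}{:}(3\times4.S_4)$ (neither order is divisible by $7$), so a copy of $L$ containing a $13\mt{B}$-element can be maximal only if it lies in no known maximal subgroup; checking the (otherwise complete) list of maximals of $\MM$ excludes this, as it does the fusions with $7\mt{A}$ or $2\mt{A}$ in place of $7\mt{B}$ or $2\mt{B}$. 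This leaves the fusion $2\mt{B},3\mt{B},6\mt{E},7\mt{B},13\mt{A}$ on $L$ and $2\mt{B},4\mt{C},12\mt{H},14\mt{C}$ on the outer coset.

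For (ii), using \textcode{mmgroup} we would search over conjugates for elements realising standard generators of $\PGL_2(13)$ --- for instance $a,b\in\MM$ with $|a|=2$, $|b|=3$, $|ab|=7$, $a\in2\mt{B}$, $b\in3\mt{B}$, $ab\in7\mt{B}$, satisfying a presentation of $\PSL_2(13)$ taken from the ATLAS (satisfaction of the presentation and simplicity of $\PSL_2(13)$ then pin down the isomorphism type of $L=\langle a,b\rangle$), together with a further element extending $L$ to $H\cong\PGL_2(13)$. For each of the nine conjugacy classes of $H$ we would use the class-identification routine of \textcode{mmgroup} to confirm the $\MM$-classes are exactly $1\mt{A},2\mt{B},3\mt{B},4\mt{C},6\mt{E},7\mt{B},12\mt{H},13\mt{A},14\mt{C}$, and also verify $C_\MM(H)=C_\MM(L)=1$. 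For (iii), if $H<K\le\MM$ with $K$ maximal then, since the maximal subgroups of $\MM$ are classified apart from almost simple ones with socle $\PSL_2(13)$ (and $\PGL_2(13)$ is not properly contained in such), $K$ is of known type; divisibility of $|K|$ by $|H|=2^3\cdot3\cdot7\cdot13$ leaves a short list of candidates, including $2\udot\BB$, $\Gx$, $2^2\udot{}^2E_6(2){:}S_3$, $3\udot\mt{Fi}_{24}'$, $(3^{1+12}\udot2\mt{Suz}){:}2$, $\mt{Th}$, $\mt{Fi}_{23}$, ${}^3D_4(2){:}3$, and one or two further $3$-local types. The involution-centraliser cases $2\udot\BB$, $\Gx$ and (via $C_\MM(L)=1$) $2^2\udot{}^2E_6(2){:}S_3$ are excluded because $H\le K$ would place a nontrivial element in $C_\MM(H)$ or $C_\MM(L)$; ${}^3D_4(2){:}3$ is excluded because $\PSL_2(13)\not\le{}^3D_4(2)$ (the normaliser of an order-$13$ subgroup of ${}^3D_4(2)$ is only $13{:}4$, too small to contain $13{:}6$) and $\PGL_2(13)/\PSL_2(13)\cong C_2$ cannot surject onto the outer $C_3$; each remaining case reduces --- after quotienting out a normal $3$-subgroup and passing to a derived subgroup --- to an embedding/conjugacy question inside $\mt{Fi}_{23}$, $\mt{Fi}_{24}'$, $2\udot\mt{Suz}$, $\mt{Th}$, etc., which is settled from the known maximal-subgroup structure of these groups together with direct computation. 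The same argument with $L$ in place of $H$ shows every $\PSL_2(13)$ with the above fusion extends to a $\PGL_2(13)$, so no maximal subgroup of $\MM$ is isomorphic to $\PSL_2(13)$ itself.

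For the uniqueness in (iv): any maximal $K$ with socle $\cong\PSL_2(13)$ is, by (i) and (iii), isomorphic to $\PGL_2(13)$ and contains a $13\mt{A}$-element $z$; since $13\mt{A}$ is a single $\MM$-class we may assume $z\in K$. Then the Borel $N_K(\langle z\rangle)\cong13{:}12$ lies in $N_\MM(\langle z\rangle)$, and $K$ is generated by $N_K(\langle z\rangle)$ together with one further involution; working inside the concrete group $N_\MM(\langle z\rangle)$ and the centraliser $C_\MM(z)\cong13\times\PSL_3(3)$, we would enumerate the subgroups $13{:}12$ of the relevant type through $\langle z\rangle$, show they form one $N_\MM(\langle z\rangle)$-orbit, and then verify by an explicit orbit computation that $C_\MM(z)$ acts transitively on the extensions to $K\cong\PGL_2(13)$. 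Transitivity of $\MM$ on $13\mt{A}$ then gives a single conjugacy class. I expect the main obstacle to be the maximality step (iii), in particular excluding the large overgroups $\mt{Fi}_{23}$, $3\udot\mt{Fi}_{24}'$ and $(3^{1+12}\udot2\mt{Suz}){:}2$, where order and centraliser arguments do not suffice and one must combine their detailed subgroup structure with conjugacy computations in $\MM$; a secondary difficulty is ruling out the $13\mt{B}$ and $7\mt{A}$ fusion patterns in (i) without an a priori local containment.
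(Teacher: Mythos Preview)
Your outline differs substantially from the paper and contains a real gap. The paper proceeds by \emph{exhaustive enumeration}: taking the fusion constraints $(2\mt{B},7\mt{B},13\mt{A})$ from the literature, it first classifies the five $\MM$-classes of subgroups $13\mt{A}{:}6$ (all visible inside $N_\MM(\langle g_{13}\rangle)\cong((13{:}6)\times\PSL_3(3)).2$), and then, for each representative $\langle g_{13},g_6\rangle$, it lists \emph{every} $2\mt{B}$-involution $j_2$ inverting $g_6$ with $\langle g_6,j_2\rangle\cong\mt{D}_{12}$ of pure $2\mt{B}$-type --- the number of such $j_2$ being the class-multiplication coefficient $(2\mt{B},2\mt{B},6\mt{X})$, up to about $1.4\times10^{7}$ --- and checks which yield $\langle g_{13},g_6,j_2\rangle\cong\PSL_2(13)$. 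This produces exactly three $\MM$-classes of $\PSL_2(13)$ meeting $13\mt{A}$: the two previously known ones with non-trivial centraliser, and one new class with trivial centraliser, which is then explicitly extended to $\PGL_2(13)$ and shown to be maximal by comparing class fusions against the known maximal subgroups.

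Your step~(iv) enumerates subgroups $13{:}12$ and their extensions to $\PGL_2(13)$, so it classifies the $\PGL_2(13)$'s through a $13\mt{A}$-element but not the $\PSL_2(13)$'s. Hence the claim at the end of your~(iii), that ``every $\PSL_2(13)$ with the above fusion extends to a $\PGL_2(13)$'', is unjustified: showing that such an $L$ lies in no known maximal only tells you that $L$ is either itself maximal or sits inside a new $\PGL_2(13)$, and nothing in your argument rules out the former. Closing this requires either classifying all the $\PSL_2(13)$'s (as the paper does via $13{:}6$ plus a bounded involution search) or proving $N_\MM(L)>L$ for every such $L$, which again needs a handle on every $L$. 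Two further errors: your list of candidate overgroups in~(iii) is wrong --- $\mt{Fi}_{23}$, $\mt{Th}$, and ${}^3D_4(2){:}3$ are not maximal in $\MM$; the groups the paper actually has to exclude are $2^{2}\udot{}^2\mt{E}_6(2){:}\mt{S}_3$, $3^{1+12}\udot2\udot\mt{Suz}{:}2$, $\mt{S}_3\times\mt{Th}$, $13^2{:}2\PSL_2(13).4$, $3^8\udot\mt{P}\Omega_8^-(3).2$, and $(3^2{:}2\times\mt{P}\Omega^+_8(3))\udot\mt{S}_4$, handled via class-fusion data and a centraliser check --- and your exclusion of the $13\mt{B}$ fusion in~(i) is circular as written, whereas the paper takes this from Wilson and independently confirms it by the same exhaustive method.
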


\begin{theorem} \label{thmU34}
The Monster has a unique conjugacy class of maximal subgroups with socle isomorphic to $\PSU_3(4)$. 
These subgroups are isomorphic to $\PSU_3(4){:}4$ and have non-trivial intersection with precisely the conjugacy classes $2\mt{B}$, $3\mt{C}$, $4\mt{C}$, $4\mt{D}$, $5\mt{B}$, $6\mt{F}$, $8\mt{E}$, $10\mt{E}$, $12\mt{J}$, $13\mt{B}$, $15\mt{D}$, $16{B}$, and $16\mt{C}$ of $\MM$. 
\end{theorem}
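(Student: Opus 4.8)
The plan is to combine a reduction modulo the prime~$13$ with explicit computations in $\MM$ carried out using \textcode{mmgroup}, run in parallel with the proof of Theorem~\ref{thmL213}. Throughout, let $S$ denote a putative subgroup of $\MM$ with $S\cong\PSU_3(4)$; recall that $|S|=2^6\cdot 3\cdot 5^2\cdot 13$, that $\PSU_3(4)$ is simple with $\Aut(\PSU_3(4))=\PSU_3(4)\sdp 4$ induced by a field automorphism of order~$4$, and that $S$ contains a self-centralising cyclic subgroup $T$ of order~$13$ with $N_S(T)\cong 13\sdp 3$. The Monster has exactly two classes of elements of order~$13$, namely $13\mt{A}$ and $13\mt{B}$, both of them rational, so the twelve non-trivial elements of $T$ lie in a single $\MM$-class. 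The first step is to pin this class down as $13\mt{B}$: restricting the $196883$-dimensional irreducible character $\chi$ of $\MM$ to $S$ and imposing that $\langle\chi|_S,\psi\rangle\in\mathbb{Z}_{\ge 0}$ for every irreducible character $\psi$ of $\PSU_3(4)$ should eliminate every class fusion $S\to\MM$ consistent with $T$ lying in $13\mt{A}$; equivalently, the search of the next step, performed with a copy of $T$ placed inside $N_\MM(T)=(13\sdp 6\times\PSL_3(3))\gext 2$, then produces nothing. Hence $T\subseteq 13\mt{B}$, all copies of $T$ in $\MM$ are $\MM$-conjugate, and we may fix one inside the $13$-local maximal subgroup $N:=N_\MM(T)$, of shape $13^{1+2}\sdp(3\times 4S_4)$, whose internal structure is completely explicit in \textcode{mmgroup}.

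The second step is to construct, and then classify up to $\MM$-conjugacy, the subgroups $S$ of $\MM$ containing a fixed copy of $F:=N_S(T)=S\cap N\cong 13\sdp 3$ inside $N$. There are only finitely many — in practice very few — subgroups isomorphic to $13\sdp 3$ in $N$ up to $N$-conjugacy, distinguished by the $\MM$-classes of their elements and by their action on $13^{1+2}/T$. For each such $F$ I would use a presentation (or a set of standard generators) of $\PSU_3(4)$ on a generating set that includes a copy of $F$, and search with \textcode{mmgroup} for an element $g\in\MM$ of prescribed order and $\MM$-class completing $F$ to a subgroup $H=\langle F,g\rangle$ that satisfies the defining relations; \textcode{mmgroup}'s conjugacy-class identification (via values of $\chi$ together with power maps) makes these element searches effective, and the relations sharply restrict the set over which $g$ ranges. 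For each $H$ produced I would certify $H\cong\PSU_3(4)$ by verifying $|H|=62400$ and simplicity, and then test $H$ for $\MM$-conjugacy against the examples already found. The expected outcome is that, up to $\MM$-conjugacy, exactly one such $S$ arises. I would then compute $C_\MM(S)$ and $N_\MM(S)$ directly in \textcode{mmgroup}, expecting $C_\MM(S)=1$ and $N_\MM(S)=S\sdp 4\cong\PSU_3(4)\sdp 4$, exhibiting an explicit element of $\MM$ that normalises $S$ and induces a generator of the order-$4$ outer automorphism group.

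The third step is to prove that $S\sdp 4$ is maximal in $\MM$. Suppose, for a contradiction, that $S\sdp 4<K<\MM$ with $K$ maximal. By the classification of the maximal subgroups of $\MM$ — complete except for the almost simple subgroups whose socle is one of $\PSL_2(8)$, $\PSL_2(13)$, $\PSL_2(16)$, $\PSU_3(4)$ — either $K$ is almost simple with socle in this list, or $K$ is one of the known maximal subgroups of $\MM$. If $\mathrm{soc}(K)\cong\PSL_2(q)$ with $q\in\{8,13,16\}$, then $|K|\le|\Aut(\PSL_2(16))|=16320<249600=|S\sdp 4|$, which is impossible; and if $\mathrm{soc}(K)\cong\PSU_3(4)$, then, socles being characteristic, $S=\mathrm{soc}(S\sdp 4)\le\mathrm{soc}(K)$, hence $S=\mathrm{soc}(K)$ by order, so $K\le N_\MM(S)=S\sdp 4$, contradicting $S\sdp 4<K$. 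Thus $K$ is a known maximal subgroup and $|S\sdp 4|=2^8\cdot 3\cdot 5^2\cdot 13$ divides $|K|$. The known maximal subgroups of $\MM$ of order divisible by $5^2\cdot 13$ form a short, explicit list, comprising $2\gext\BB$, $\Gx$, $3\gext\mathrm{Fi}_{24}'$, $2^2\gext{}^2E_6(2)\sdp S_3$, $3^{1+12}\gext 2\mathrm{Suz}\sdp 2$, $S_3\times\mathrm{Th}$, and a couple of further $3$- and $5$-local subgroups. Each of these is the centraliser of an involution, or the normaliser of a cyclic group of prime order $2$ or $3$, or the normaliser of an extraspecial or elementary abelian normal $p$-subgroup, or $S_3\times\mathrm{Th}=N_\MM(3\mt{C})$; and in every case one checks that $S$, being perfect, must centralise a nontrivial normal subgroup of $K$ (a central involution of $K$, an element of order $3$, the centre of a normal $p$-subgroup, or a direct factor), forcing $C_\MM(S)\ne 1$ and contradicting $C_\MM(S)=1$. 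Hence no such $K$ exists and $S\sdp 4$ is maximal in $\MM$.

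Uniqueness then follows: any maximal subgroup $M\le\MM$ with $\mathrm{soc}(M)\cong\PSU_3(4)$ has socle $\MM$-conjugate to $S$ by the second step, so after conjugating $M\le N_\MM(\mathrm{soc}(M))=N_\MM(S)=S\sdp 4$, whence $M=S\sdp 4$ by maximality; thus there is exactly one conjugacy class of such maximal subgroups, with representative $S\sdp 4\cong\PSU_3(4)\sdp 4$. Finally, the class fusion is read off by applying \textcode{mmgroup}'s class-identification routine to a complete set of conjugacy class representatives of $S\sdp 4$, which I expect to have image precisely $2\mt{B}$, $3\mt{C}$, $4\mt{C}$, $4\mt{D}$, $5\mt{B}$, $6\mt{F}$, $8\mt{E}$, $10\mt{E}$, $12\mt{J}$, $13\mt{B}$, $15\mt{D}$, $16\mt{B}$, $16\mt{C}$. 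The main obstacle is not the arithmetic but rigour and reproducibility in the second and third steps: organising a \emph{provably complete} search for copies of $\PSU_3(4)$ through a fixed copy of $13\sdp 3$ inside the Monster — rather than merely exhibiting one copy — and presenting the maximality argument so that it remains watertight given that the ambient classification of the maximal subgroups of $\MM$ is exactly what this paper is completing.
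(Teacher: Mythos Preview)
Your route via the $13{:}3$ maximal subgroup of $\PSU_3(4)$ is genuinely different from the paper's, and the maximality and uniqueness arguments in your third and fourth steps are sound in outline. The serious gap is in the second step, where you propose to classify all $\PSU_3(4)<\MM$ containing a fixed $F\cong 13{:}3$ by ``searching with \textcode{mmgroup} for an element $g\in\MM$ of prescribed order and $\MM$-class completing $F$ to $H=\langle F,g\rangle$.'' This is not a bounded search: an $\MM$-conjugacy class typically has on the order of $10^{40}$--$10^{50}$ elements, and defining relations for $\PSU_3(4)$ are verification tests, not a priori restrictions on where $g$ lives. Without pinning $g$ down to a subgroup of $\MM$ that you can actually enumerate or sample exhaustively, you cannot claim completeness. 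Your proposal never identifies such a subgroup (e.g.\ a normaliser in which all candidate $g$ must lie, together with a count of how many there are).

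The paper solves exactly this problem by anchoring on the $5\times\mt{A}_5$ maximal subgroup instead. The point is that Norton has classified all $\mt{A}_5<\MM$ up to conjugacy, and the class-fusion constraints ($2\mt{B},3\mt{C},5\mt{B}$) force the $\mt{A}_5$ to be the unique ``type $\mt{G}$'' class, whose centraliser in $\MM$ is only $\mt{D}_{10}$; hence the full $5\times\mt{A}_5$ is uniquely determined once $A_\mt{G}$ is fixed. The extension from $5\times\mt{A}_5$ to $\PSU_3(4)$ is then by a single involution $j_2$ inverting a specified $5\mt{B}$-element, and the $(2\mt{B},2\mt{B},5\mt{B})$ class multiplication coefficient of $\MM$ says there are exactly $3\,150\,000$ such involutions, all lying in the (explicitly constructed) normaliser of the $5\mt{B}$-element. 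That is what makes the search provably complete. No analogous bound is available for your $13{:}3$ approach: the $3$-element lies in class $3\mt{C}$, whose normaliser is $\mt{S}_3\times\mt{Th}$, and you have not explained how to confine and count the candidates for $g$ inside it. A secondary issue is that you assert the $13$-elements lie in $13\mt{B}$ via a character-restriction argument that you do not carry out; the paper in fact establishes this only \emph{after} constructing $S$, via a trace computation modulo $3$ on the $196884$-dimensional module.
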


\begin{theorem} \label{thmL216}
The Monster has no maximal subgroup that is almost simple with socle $\PSL_2(16)$. 
\end{theorem}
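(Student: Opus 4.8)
The plan is as follows. If $X$ is a maximal subgroup of $\MM$ that is almost simple with socle $L\cong\PSL_2(16)$, then $L\trianglelefteq X\le N_\MM(L)$, and $N_\MM(L)\ne\MM$ because $\MM$ is simple, so $X=N_\MM(L)$; in particular $C_\MM(L)=1$ and $X/L\hookrightarrow\mathrm{Out}(\PSL_2(16))\cong C_4$. Hence it suffices to classify, up to conjugacy, all subgroups $L\cong\PSL_2(16)$ of $\MM$, and to show in each case that $N_\MM(L)$ is contained in a proper subgroup of $\MM$ strictly larger than itself --- equivalently, in one of the previously classified maximal subgroups --- and is therefore not maximal. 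Such $L$ do occur: $\PSL_2(16)\cong\mathrm{SL}_2(16)\le\mathrm{Sp}_4(4)=\mt{S}_4(4)$ by field reduction, and $\mt{S}_4(4)$ sits inside the known maximal subgroup $(\PSL_3(2)\times\mt{S}_4(4){:}2){:}2$, so these copies are certainly non-maximal; the content of the theorem is that there are no others.

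First I would constrain the possible embeddings. The element orders in $\PSL_2(16)$ are $1,2,3,5,15,17$, its Sylow $2$-subgroup is elementary abelian of order $16$, and $L$ is generated by a pair of ATLAS standard generators $(a,b)$ with $a$ an involution, $b$ of order $3$ and $ab$ of order $15$. Since $L$ contains dihedral subgroups $D_{30}$ and $D_{34}$, it has pairs of involutions with product of order $15$ or $17$; because the class $\mt{2A}$ of $\MM$ is a $6$-transposition class, the involutions of $L$ cannot lie in $\mt{2A}$, so $a\in\mt{2B}$. To pin down the remaining classes I would restrict the $196883$-dimensional irreducible $\mathbb C\MM$-module to $L$: it decomposes into $L$-irreducibles of degrees $1$, $15$, $16$ and $17$, and matching the dimension, imposing rationality of the restricted character (so that Galois-conjugate constituents occur with equal multiplicity), using the $\PSL_2(16)$ power maps (the $\MM$-classes of the elements of $L$ of order $3$, $5$, $15$ and $17$ are all determined by that of a single order-$15$ element), and requiring each $L$-class to fuse into an $\MM$-class of equal trace, together reduce the possibilities for the classes of $a$, $b$ and $ab$ to a short list.

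Next I would search in \textcode{mmgroup} for the generating pairs: for each surviving class pair $(\mt{2B},\mt{3Y})$, fix a representative $a$ of $\mt{2B}$, run over suitable $b$ of class $\mt{3Y}$ with $ab$ of order $15$, test whether $\langle a,b\rangle\cong\PSL_2(16)$ (via its order and a defining presentation), and sort the resulting subgroups into $\MM$-classes. Making this finite and computationally feasible requires some care: one exploits that the order-$17$ element of $\langle a,b\rangle$ lies in the single class $\mt{17A}$, whose normalizer in $\MM$ is comparatively small, to conjugate part of the configuration into a fixed position and restrict the remaining search accordingly (and, where necessary, to locate a copy inside a large maximal subgroup of $\MM$ in which the relevant conjugacy can be decided directly). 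For each $\MM$-class of $L$ so produced, I would compute $N_\MM(L)$ with \textcode{mmgroup} and exhibit a known maximal subgroup containing it --- by the observations above, typically a conjugate of $(\PSL_3(2)\times\mt{S}_4(4){:}2){:}2$, and otherwise a $2$-local subgroup such as $\Gx$ or $2\gext\BB$ --- which shows that the almost simple subgroup with socle $L$ is not maximal.

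The main obstacle is not any individual computation but proving \emph{completeness}: one must argue rigorously that the character- and power-map constraints genuinely reduce the problem to the finite set of candidate generating pairs actually examined, so that no $\MM$-conjugacy class of $\PSL_2(16)$ is missed. Establishing this is the delicate core of the argument, and it mirrors the hardest part of the analyses underlying Theorems~\ref{thmL213} and~\ref{thmU34} and the $\PSL_2(8)$ case.
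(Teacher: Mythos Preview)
Your overall plan --- classify all $\PSL_2(16) < \MM$ up to conjugacy and show that each has normalizer strictly contained in a known maximal subgroup --- matches the paper's. However, the approaches diverge at the crucial reduction to a finite, exhaustive search, and here your proposal has a genuine gap.

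The paper does not search over generating pairs $(a,b)$ with $a\in 2\mt{B}$, $b$ of order~$3$, and $|ab|=15$, nor does it use the class $17\mt{A}$. Instead it exploits the maximal subgroup $\mt{A}_5 < \PSL_2(16)$ together with Norton's classification of the subgroups $\mt{A}_5$ of $\MM$: every as-yet-unclassified $\PSL_2(16)$ must have its order-$5$ elements in $5\mt{B}$ (this is one of the cited Norton--Wilson fusion constraints, not something re-derived from character restriction), and there are exactly three $\MM$-classes of $\mt{A}_5$ containing $5\mt{B}$-elements. For each of these three fixed $\mt{A}_5 = \langle g_2, g_3 \rangle$, one then seeks all $2\mt{B}$-involutions $j_2$ inverting $g_5 = g_2g_3$; the $(2\mt{B},2\mt{B},5\mt{B})$ class multiplication coefficient says there are exactly $3150000$ of them, and they all lie in $N_\MM(\langle g_5\rangle) \cong 5^{1+6}{:}2\udot\mt{J}_2{:}4$. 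The paper constructs enough of this normalizer to find all $3150000$ involutions by random search, then tests each for whether $\langle g_2,g_3,j_2\rangle \cong \PSL_2(16)$. Two of the three $\mt{A}_5$-types yield nothing; the third yields copies of $\PSL_2(16)$ each centralised by a $2\mt{A}$-involution and hence contained, together with any almost simple extension, in $2\udot\BB$.

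Your proposal, by contrast, never specifies a finite set of candidates to test. Running over ``suitable $b$ of class $3\mt{Y}$ with $ab$ of order~$15$'' for fixed $a$ is not a finite search unless $b$ is first restricted to an explicit finite set, and your suggestion to ``conjugate part of the configuration into a fixed position'' using the $17\mt{A}$-normalizer is not developed into a concrete procedure (nor is it clear how one would then extend a fixed $\mt{D}_{34}$ to $\PSL_2(16)$ exhaustively). You correctly flag completeness as ``the delicate core of the argument'', but you have not supplied that core. Norton's $\mt{A}_5$-classification is precisely the missing ingredient: it replaces an unbounded search by three finite ones whose sizes are known in advance from structure constants.
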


\begin{theorem} \label{thmL28}
The Monster has no maximal subgroup that is almost simple with socle $\PSL_2(8)$. 
\end{theorem}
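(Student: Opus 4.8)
The plan is to classify, up to conjugacy in $\MM$, all subgroups $S\cong\PSL_2(8)$ of $\MM$, and then to show that every such $S$ is $\MM$-conjugate into the subgroup $A_{12}$ of the maximal subgroup $(A_5\times A_{12}){:}2$ of $\MM$. This suffices: $\PSL_2(8)$ embeds into $A_{12}$ only through its natural degree-$9$ action together with three fixed points (its smallest faithful transitive degree is $9$, and there is no faithful transitive action of degree $10$, $11$ or $12$), so $C_{A_{12}}(S)\cong C_3\ne 1$, hence $C_\MM(S)\ne 1$ and $N_\MM(S)$ is not almost simple. On the other hand, a maximal subgroup $H\le\MM$ that is almost simple with socle $S$ must equal $N_\MM(S)$ (since $S\trianglelefteq H$, $S$ is not normal in $\MM$, and $H$ is maximal), contradicting the previous sentence.

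\emph{Step 1: fusion.} The nontrivial element orders of $\PSL_2(8)$ are $2,3,7,9$, and those of $\PSL_2(8){:}3$ are $2,3,6,7,9$. Using the character table and power maps of $\MM$ (in the \textsf{GAP} character table library, cross-checked with \textcode{mmgroup}) --- in particular the requirement that the restriction to $S$ of the faithful $196883$-dimensional complex representation of $\MM$ be an ordinary character of $S$ --- together with the local structure of $\MM$, one narrows down the possible $\MM$-classes of the nontrivial elements of $S$. The decisive point is that $S$ contains a Frobenius subgroup $F\cong 2^3{:}7$ in which the element of order $7$ cycles the seven involutions of the normal $2^3$: since neither $C_\MM(2\mt{A})\cong 2\udot\BB$ nor $N_\MM(\langle 7\mt{A}\rangle)\cong(7{:}3\times\mt{He}){:}2$ (whose $7\mt{A}$-element centralises its $\mt{He}$-factor) accommodates such an $F$, the involutions of $S$ lie in class $2\mt{B}$ and the order-$7$ elements in class $7\mt{B}$; the classes of the elements of orders $3$ and $9$ follow the same way.

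\emph{Step 2: enumeration.} The Borel subgroup $F\cong 2^3{:}7$ is maximal of index $9$ in $S$; any two distinct Borels of $\PSL_2(8)$ meet in a torus $T\cong C_7$ and generate $S$; and the two Borels through a given $T$ are interchanged by an involution of $N_S(T)\cong D_{14}$ inverting $T$. Hence every copy of $S$ in $\MM$ equals $\langle F,j\rangle$, where $F$ is a subgroup $2^3{:}7$ whose $2^3$ is $2\mt{B}$-pure and admits a Singer $7$-element, $T=\langle c\rangle\cong C_7$ is a torus of $F$ with $c\in 7\mt{B}$, and $j\notin F$ is an involution with $jcj=c^{-1}$. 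One first checks, from the local structure of $\MM$ (e.g.\ inside the maximal $2$-local subgroup $2^{3+6+12+18}\udot(\PSL_3(2)\times 3S_6)$, or inside $\Gx=N_\MM(\langle 2\mt{B}\rangle)$), that there is a single $\MM$-class of such $F$. Fixing one, every candidate involution $j$ lies in $N_\MM(\langle c\rangle)=N_\MM(\langle 7\mt{B}\rangle)\cong 7^{1+4}{:}(3\times 2S_7)$, an explicitly constructible group of order roughly $5\times 10^{8}$ in which the required computations are feasible. Running over the finitely many such $j$ up to $N_\MM(F)$-conjugacy, and retaining those for which $F$ and $j$ satisfy a fixed presentation of $\PSL_2(8)$ (so that $\langle F,j\rangle\cong\PSL_2(8)$, as $\PSL_2(8)$ is simple), yields a complete and short list of conjugacy classes of subgroups $\PSL_2(8)\le\MM$.

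\emph{Step 3: conjugating into $A_{12}$.} For each class on this list one verifies, using \textcode{mmgroup}, that a representative $S$ is $\MM$-conjugate into $A_{12}$: construct $(A_5\times A_{12}){:}2\le\MM$ explicitly, observe that the $\PSL_2(8)$-subgroups of $A_{12}$ form a single $A_{12}$-class, and exhibit an element of $\MM$ conjugating $S$ into it. Together with the first paragraph this completes the proof. The main obstacle is the \emph{completeness} of the enumeration in Step~2: producing some copies of $\PSL_2(8)$ in $\MM$ is routine, but proving there are no others rests on the uniqueness up to conjugacy of the Borel subgroup $F\cong 2^3{:}7$, and on organising the search inside the relatively small group $N_\MM(\langle 7\mt{B}\rangle)$ so that no class of candidate inverting involutions $j$ is overlooked; the resulting list must then be checked against the fusion constraints of Step~1.
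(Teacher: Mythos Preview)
Your Step~1 contains a genuine error. You argue that the order-$7$ elements of $S\cong\PSL_2(8)$ must lie in class $7\mt{B}$ because the Frobenius group $F\cong 2^3{:}7$ cannot be ``accommodated'' in $N_\MM(\langle 7\mt{A}\rangle)\cong(7{:}3\times\mt{He}){:}2$. But $F$ need not lie in $N_\MM(\langle g_7\rangle)$ at all: the normal $2^3$ does not normalise $\langle g_7\rangle$, since a nontrivial involution $e\in 2^3$ conjugates $g_7$ to $g_7 e'$ with $e'\ne 1$. More decisively, Norton \cite[Section~5]{N98} has exhibited three classes of $\PSL_2(8)<\MM$, all with elements of orders $2,3,7$ in the $\MM$-classes $2\mt{B},3\mt{B},7\mt{A}$. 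Your argument therefore rules out precisely the fusion pattern that actually occurs, and leaves those subgroups unaddressed. The paper handles this by invoking the prior result (Norton, Norton--Wilson) that the $7\mt{A}$-type subgroups are already known not to be maximal or to extend to almost simple maximal subgroups, and then restricts attention to the remaining $(2\mt{B},3\mt{B},7\mt{B})$ fusion.

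Your Step~2 also understates the work required. The claim that there is a single $\MM$-class of $2\mt{B}$-pure $F\cong 2^3{:}7$ with a $7\mt{B}$-element is asserted without proof; the paper first proves the nontrivial Lemma~\ref{lem_7B} that every such $F$ is conjugate into $\GG\cong 2^{1+24}\udot\mt{Co}_1$ --- which requires a case analysis over all maximal $2$-local subgroups of $\MM$, showing that only $\GG$ meets $7\mt{B}$ --- and then, via an orbit computation in $\QQ/Z(\QQ)$, finds \emph{three} candidate classes of such $F$ (Proposition~\ref{prop_2^3:7}), not one. Your broad strategy thereafter (search for an inverting involution $j$ in $N_\MM(\langle 7\mt{B}\rangle)\cong 7^{1+4}{:}(3\times 2\mt{S}_7)$) does match the paper's, but your Step~3 rests on a false expectation: the paper's computation shows that \emph{none} of the $72030$ inverting involutions extends any of the three $B_i$ to $\PSL_2(8)$, so the ``short list'' of $7\mt{B}$-type $\PSL_2(8)$ is empty and there is nothing to conjugate into $\mt{A}_{12}$.
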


{We comment on two corrections to the existing literature in Remark \ref{remPSL259}.}

Historically, various results about the Monster have appeared without detailed proofs or citations, especially in the Atlas of Finite Groups \cite{atlas}. 
This has been explicitly pointed out by Meierfrankenfeld and Shpectorov, who classified the maximal $2$-local subgroups of $\MM$; see \cite[Section~1]{MS02}. 
By now, much of the information in the Atlas has been independently checked, and lists of corrections are available; see e.g. \cite{Bnew,BMO,modularAtlas,NortonAtlasWebsite}.
However, in light of these observations, we feel that it is important to be transparent with regards to what we have taken from the literature vs. what we have independently verified. 
The following remark summarises the key existing results that we employ in our proofs.

\begin{remark} \label{rem:assumptions}
We use the result that the classification of the maximal subgroups of $\MM$ is complete apart from the possibility that there are as-yet-unclassified maximal subgroups that are almost simple with socle $S$ isomorphic to $\PSU_3(4)$ or $\PSL_2(q)$ for some $q \in \{8,13,16\}$; see \cite{W17,W17b}. 
We also apply results of Norton and/or Wilson~\cite{N98,NW02,W15} which show that, in every such maximal subgroup, the socle $S$ satisfies certain conjugacy class fusion restrictions, outlined in Section~\ref{sec_known_subgroups}. 
The proofs of Theorems~\ref{thmU34} and~\ref{thmL216} refer to Norton's classification \cite[Section~4]{N98} of the subgroups of $\MM$ isomorphic to $\mt{A}_5$. 
\end{remark}

Our proof of each of Theorems~\ref{thmL213}--\ref{thmL28} is supported by a computational construction of all conjugacy classes of subgroups of $\MM$ isomorphic to $S = \PSL_2(13)$, $\PSU_3(4)$, $\PSL_2(16)$, or $\PSL_2(8)$, subject to the conjugacy class fusion restrictions outlined in Section~\ref{sec_known_subgroups}. 
We show that every such subgroup either has non-trivial centraliser, and is therefore properly contained in some $p$-local maximal subgroup of $\MM$, or has trivial centraliser and extends to an almost simple overgroup of $S$ that is maximal in $\MM$. 
Most of our computations are carried out using Seysen's Python package \textcode{mmgroup} \cite{sey_python,sey20,sey22} for computing with $\MM$. 
In particular, we exhibit explicit generators for our new maximal subgroups $\PSL_2(13){:}2$ and $\PSU_3(4){:}4$ in {\textcode{mmgroup}} format, and make our code publicly available \cite{ourfile}. 
Based on Theorems~\ref{thmL213}--\ref{thmL28} and the existing literature, the complete classification of the maximal subgroups of $\MM$ is as follows. 

\begin{theorem}
For every group $G$ in Table~\ref{tab:allmax}, $\MM$ has a unique conjugacy class of maximal subgroups  isomorphic to $G$. 
Conversely, every maximal subgroup of $\MM$ is isomorphic to some group in Table~\ref{tab:allmax}. \end{theorem}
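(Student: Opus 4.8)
The plan is to assemble Table~\ref{tab:allmax} from the existing literature together with Theorems~\ref{thmL213}--\ref{thmL28}, and then to check that the resulting list is both exhaustive and non-redundant. As recorded in Remark~\ref{rem:assumptions}, the cited work of Norton, Wilson, Holmes and others (see \cite{W17,W17b} and the references therein) reduces the problem to the following dichotomy: every maximal subgroup of $\MM$ is either isomorphic to one of the groups determined in those earlier papers, or is almost simple with socle $S$ isomorphic to $\PSU_3(4)$ or to $\PSL_2(q)$ for some $q\in\{8,13,16\}$. The groups of the first kind account for every row of Table~\ref{tab:allmax} except $\PSL_2(13){:}2$ and $\PSU_3(4){:}4$, and for each such $G$ the relevant source already provides a single conjugacy class of maximal subgroups isomorphic to $G$; I would take these results as given.

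For the second kind, I would invoke Theorems~\ref{thmL213}--\ref{thmL28}: Theorems~\ref{thmL216} and~\ref{thmL28} show that no maximal subgroup has socle $\PSL_2(16)$ or $\PSL_2(8)$, while Theorems~\ref{thmL213} and~\ref{thmU34} show that the only maximal subgroups with socle $\PSL_2(13)$ or $\PSU_3(4)$ are, up to conjugacy, $\PSL_2(13){:}2$ and $\PSU_3(4){:}4$, each forming a single class. Combining these with the first dichotomy immediately gives the converse assertion of the theorem: every maximal subgroup of $\MM$ is isomorphic to a group in Table~\ref{tab:allmax}.

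It remains to confirm that each $G$ in the table really is realised by a unique conjugacy class of \emph{maximal} subgroups. For the rows inherited from the literature this is again a citation. For the two new rows I would argue in three steps. Existence is immediate from the proofs of Theorems~\ref{thmL213} and~\ref{thmU34}, which construct explicit subgroups isomorphic to $\PSL_2(13){:}2$ and $\PSU_3(4){:}4$, with generators given in \textcode{mmgroup} format. Maximality: a proper overgroup of either subgroup would be a maximal subgroup of $\MM$, hence one of the groups already in Table~\ref{tab:allmax}; comparing orders, composition factors, and the conjugacy class fusion data recorded in Theorems~\ref{thmL213} and~\ref{thmU34} (e.g.\ that $\PSL_2(13){:}2$ meets $13\mt{A}$ and $14\mt{C}$, and that $\PSU_3(4){:}4$ meets $13\mt{B}$, $15\mt{D}$, $16\mt{B}$, $16\mt{C}$) with the corresponding data for the known maximal subgroups rules out any such containment. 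Uniqueness: the proofs of Theorems~\ref{thmL213} and~\ref{thmU34} enumerate all conjugacy classes of subgroups of $\MM$ with socle $\PSL_2(13)$, respectively $\PSU_3(4)$, subject to the fusion restrictions of Section~\ref{sec_known_subgroups}, and in each case exactly one class extends to an almost simple maximal overgroup, so the conjugacy class of $\PSL_2(13){:}2$, respectively $\PSU_3(4){:}4$, is unique.

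The main obstacle is the bookkeeping required to guarantee that the assembled table is at once complete and irredundant — in particular, the maximality check for the two newly found subgroups, which amounts to excluding their containment in any previously classified maximal subgroup of $\MM$, together with the confirmation that no isomorphism type in the table corresponds to two non-conjugate maximal subgroups or is conjugate into another. This is a finite verification against the (now well-established) list of known maximal subgroups, carried out using orders, composition factors, and the class-fusion information from Theorems~\ref{thmL213} and~\ref{thmU34}; I would not re-prove the correctness of the prior literature, which is taken as an input in accordance with Remark~\ref{rem:assumptions}.
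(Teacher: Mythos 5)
Your proposal is correct and matches the paper's approach: the paper gives no separate argument for this theorem, treating it exactly as you do — as the assembly of the previously classified maximal subgroups (Table~\ref{tab:allmax} is reproduced from the literature, per Remark~\ref{rem:assumptions}) with Theorems~\ref{thmL213}--\ref{thmL28}, whose proofs already contain the existence, uniqueness, and maximality verifications (including the non-containment checks against known maximal subgroups) for the two new classes. The only minor remark is that the paper's maximality checks occasionally need centraliser information on top of orders and class fusion (e.g.\ for $3^8\udot\mt{P}\Omega_8^-(3).2$), but since you invoke Theorems~\ref{thmL213} and~\ref{thmU34}, which already assert maximality, this does not affect your argument.
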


\renewcommand{\arraystretch}{1.12}
\newcommand{\newt}[6]{
 #1 & #2 &  & #3 & #4 &  & #5 & #6 \\
 }
\begin{table}[t]
\begin{adjustbox}{width=15.5cm,center}
\begin{tabular}{llcllcll}
\hline
\newt{$2\udot\BB$}{  }{$(7{:}3\times\mt{He}){:}2$}{  }{$(\mt{PSL}_2(11)\times \mt{PSL}_2(11)){:}4$}{  }
\newt{$2^{1+24}\udot\mt{Co}_1$}{  }{$(\mt{A}_5\times\mt{A}_{12}){:}2$}{  }{$13^2{:}2\mt{PSL}_2(13).4$}{  }
\newt{$3\udot\mt{Fi}_{24}$}{  }{$5^{3+3}\udot(2\times\mt{PSL}_3(5))$}{  }{$(7^2{:}(3\times 2\mt{A}_4)\times \mt{PSL}_2(7)).2$}{  }
\newt{$2^{2}\udot {}^2\mt{E}_6(2){:}\mt{S}_3$}{  }{$(\mt{A}_6\times\mt{A}_6\times\mt{A}_6).(2\times\mt{S}_4)$}{  }{$(13{:}6\times \mt{PSL}_3(3)). 2$}{  }
\newt{$2^{10+16}\udot\mt{P}\Omega_{10}^+(2)$}{  }{$(\mt{A}_5\times \mt{PSU}_3(8){:}3){:}2$}{  }{$13^{1+2}{:}(3\times 4\mt{S}_4)$}{  }
\newt{$2^{2+11+22}\udot(\mt{M}_{24}\times\mt{S}_3)$}{  }{$5^{2+2+4}{:}(\mt{S}_3\times\mt{GL}_2(5))$}{  }{$\mt{PSU}_3(4){:}4$}{New}
\newt{$3^{1+12}\udot 2\udot \mt{Suz}{:}2$}{  }{$(\mt{PSL}_3(2)\times\mt{PSp}_4(4){:}2)\udot 2$}{  }{$\mt{PSL}_2(71)$}{\cite{HW08}}
\newt{$2^{5+10+20}\udot(\mt{S}_3\times \mt{PSL}_5(2))$}{  }{$7^{1+4}{:}(3\times2\mt{S}_7)$}{  }{{$59{:}29^\dagger$}}{\cite{HW04}}
\newt{$\mt{S}_3\times\mt{Th}$}{  }{$(5^2{:}[2^4]\times \mt{PSU}_3(5)).\mt{S}_3$}{  }{$11^2{:}(5\times 2\mt{A}_5)$}{}
\newt{$2^{3+6+12+18}\udot (\mt{PSL}_3(2)\times 3\mt{S}_6)$}{  }{$(\mt{PSL}_2(11)\times\mt{M}_{12}){:}2$}{  }{$\mt{PSL}_2(41)$}{\cite{NW13}} 
\newt{$3^8\udot\mt{P\Omega}_8^-(3).2$}{  }{$(\mt{A}_7\times(\mt{A}_5\times \mt{A}_5){:}2^2){:}2$}{  }{$\mt{PSL}_2(29){:}2$}{\cite{HW02}}
\newt{$(\mt{D}_{10}\times\mt{HN})\udot 2$}{  }{$5^4{:}(3\times 2\udot\mt{PSL}_2(25)){:}2$}{  }{$7^2{:}\mt{SL}_2(7)$}{  }
\newt{$(3^2{:}2\times\mt{P\Omega}^+_8(3))\udot\mt{S}_4$}{  }{$7^{2+1+2}{:}\mt{GL}_2(7)$}{  }{$\mt{PSL}_2(19){:}2$}{\cite{HW08}} 
\newt{$3^{2+5+10}.(\mt{M}_{11}\times 2\mt{S}_4)$}{  }{$\mt{M}_{11}\times \mt{A}_6\udot 2^2$}{  }{$\mt{PSL}_2(13){:}2$}{New}
\newt{$3^{3+2+6+6}{:}(\mt{PSL}_3(3)\times\mt{SD}_{16})$}{  }{$(\mt{S}_5\times\mt{S}_5\times\mt{S}_5){:}\mt{S}_3$}{  }{$41{:}40$}{  }
\newt{$5^{1+6}{:}2\udot\mt{J}_2{:}4$}{  }{  }{  }{  }{  }
\hline
\end{tabular}
\end{adjustbox}
\caption{The maximal subgroups of $\MM$. The table is mostly reproduced from \cite[p.~69]{W17}. 
Where no citation is given, the group appears in the Atlas \cite{atlas} and/or \cite{MS02,MS03,W88}.\linebreak {$\dagger$ See Remark \ref{remPSL259} for an explanation of why $\mt{PSL}_2(59)$ has been replaced by $59{:}29$.}
}\label{tab:allmax}
\end{table}

Before proceeding to our proofs, we briefly discuss the role of computational group theory in the classification of the maximal subgroups of $\MM$, and the breakthrough represented by \textcode{mmgroup}. 
Initially, the classification was handled mostly theoretically, as explained in Wilson's historical account in \cite{W17}. 
Eventually, however, extensive computations were required to construct or rule out the existence of various `small' almost simple maximal subgroups; see \cite{HW02,HW04,HW08,NW13,W15}. 
Despite the rapid development of computational group theory during the same period of time (see e.g. \cite{handbook}), computing in~$\MM$ remained very difficult, because $\MM$ has no permutation or matrix representation of sufficiently small degree to facilitate efficient computations using standard algorithms. 
Many computations in $\MM$ were therefore carried out in a non-standard computational model developed by Holmes and Wilson \cite{HW03}, based on the original construction of $\MM$ due to Griess \cite{Griess}, the simplification due to Conway \cite{Conway}, and related work of Linton et~al. \cite{Linton}. 
In this model, $\MM$ is generated by \emph{standard generators} $a$ and $b$ for the centraliser $\GG\cong\Gx$ of an involution of class $2\mt{B}$, and a `triality' element $t$ that normalises the group generated by two commuting $2\mt{B}$-involutions.  
Roughly speaking, the approach is feasible because much of the computation can be done in the $24$-dimensional matrix representation of $\text{Co}_1$ over $\mathbb{F}_3$, whereas the smallest matrix representation of $\MM$ itself, over any field, is of dimension at least $196882$. 
 
Given that Holmes and Wilson (necessarily) used their own computational construction of $\MM$ as opposed to a standard computational algebra system such as {\sf GAP} \cite{gap} or {\sc Magma} \cite{magma}, it has seemed difficult for the non-expert to reproduce their computations or apply their methods to any remaining open cases of the maximal subgroup problem. 
Seysen's package \cite{sey_python,sey20,sey22} is a significant breakthrough in this regard. 
In essence, \textcode{mmgroup} is a Python implementation of Conway's construction of the Monster. 
Elements of $\MM$ are represented as words in certain natural generators, most of which are defined in terms of automorphisms of the Parker loop $\mathcal{P}$ and certain maps from $\mathcal{P}^3$ to $\mathcal{P}^3$; for details, see the section ``Representing elements of the Monster group'' in the \textcode{mmgroup} documentation \cite{sey_python}. 
Seysen's implementation is extremely efficient, due in part to a new word-shortening algorithm, described in \cite{sey22}, which allows the group operation in $\MM$ to be performed in a matter of milliseconds. 
This facilitates extensive reproducible computations that have seemingly not been possible before, with the caveat that \textcode{mmgroup} currently lacks various `basic' functionality for computing with groups that one expects from packages such as {\sf GAP} and {\sc Magma}; see Section~\ref{sec2.3}. 

\begin{remark} \label{rem:mmgroup}
As for any computer-assisted proof, our results rely on the correctness of the software that we use. 
Theoretical justification for the correctness of \textcode{mmgroup} is given in \cite{sey_python,sey20,sey22}. 
Our paper includes several code listings defining various elements of $\MM$ in \textcode{mmgroup} format, so that the reader can readily reproduce most of our proofs. 
These listings are collected in Appendix~\ref{app:listings}. 
Some of our proofs require more extensive computations, so additional supporting code (including a well-documented Jupyter Notebook file) is provided in our GitHub repository \cite{ourfile}, where we also exhibit generators for various previously known maximal subgroups of $\MM$ that we constructed during our investigations.
\end{remark}

\begin{remark}\label{remPSL259}
\begin{iprf}    
\item It was claimed (without proof) in \cite{W17,W17b}, and later cited in e.g.\ \cite{colva}, that $\MM$ has no almost simple maximal subgroup with socle $\PSU_3(4)$. Theorem~\ref{thmU34} shows that this claim is incorrect.
\item We also correct another error in the existing literature. 
Our student Anthony Pisani has used \textcode{mmgroup} to construct a copy of each of the maximal subgroups of $\MM$ in appearing in \cite[p.~69]{W17} with the exception of $\mt{PSL}_2(59)$; see \cite{maxcode}. 
According to Holmes and Wilson \cite{HW04}, every $\mt{PSL}_2(59) < \MM$ can be generated some $A<\MM$ isomorphic to $\mt{A}_5$ together with a $2\mt{B}$-involution $i\in\MM$ centralising a dihedral subgroup $\mt{D}_{10}$ of $A$. 
More precisely, $A$ must belong to one of three $\MM$-classes of subgroups isomorphic to $\mt{A}_5$ (of which there are eight in total), and for each given $A$ there are precisely $500$ candidates for $i$.
We were able to construct (copies of) each of the candidates for $A$, and, as in \cite[p.~143]{HW04}, found all candidates for $i$ in each case. 
However, none of the involutions extended any $A$ to a group isomorphic to $\mt{PSL}_2(59)$: we verified this explicitly in \textcode{mmgroup} by considering element orders. 
Our computations contradict the final conclusion of \cite{HW04} that some of the involutions should produce a subgroup $\mt{PSL}_2(59)$. 
We discussed this discrepancy with the second author of \cite{HW04}, but it seems that the code used in \cite{HW04} is no longer available. 
Based on the theory in \cite{HW04} and our reproducible computations, we conclude that the maximal subgroup $\mt{PSL}_2(59)$ of $\MM$ proposed in \cite{HW04} does not exist. 
The unique maximal subgroup of $\MM$ containing an element of order $59$ must therefore be $59{:}29$, which is what we have listed in Table~\ref{tab:allmax}.
\end{iprf}
\end{remark}\enlargethispage{0.8cm}

\noindent {\bf Acknowledgements.} 
The second author  acknowledges the support of an Australian Research Council Discovery Early Career Researcher Award (project number DE230100579). 
We thank Martin Seysen for advice regarding his Python package \textcode{mmgroup}; Rob Wilson for clarification regarding the status of the maximal subgroup problem for the Monster, and for discussions regarding the $\PSL_2(8)$ and $\PSL_2(59)$ cases; Thomas~Breuer for several comments on earlier versions of the paper; Frank L\"ubeck and Klaus Lux for helpful discussions; and Gerald H\"ohn for sharing the Python code used in the proof of Proposition~\ref{prop:U34_fusions}. 
We also thank an anonymous referee for their helpful comments.


\section{Preliminaries} \label{sec_prel} 

\noindent Most of our group-theoretic notation is standard, and usually follows the Atlas \cite{atlas}, with the notable exception that we write $\PSL_d(q)$ instead of $\mt{L}_d(q)$ and adopt similar notation for other simple classical groups. 
We denote by $\mt{D}_n$, $\mt{A}_n$, and $\mt{S}_n$ the dihedral group of order~$n$, the alternating group of degree $n$, and the symmetric group of degree $n$. 
We use $n$ to denote a cyclic group of order $n$, and $[n]$ to denote an unspecified group of order $n$. 
An extension of a group $B$ by a group $A$ is denoted by $A.B$ (or sometimes $AB$), where $A$ is the normal subgroup. 
Sometimes the notation $A{:}B$ is used to highlight that an extension splits, and $A\udot B$ denotes a non-split extension. 
The notation $p^k$ means an elementary abelian group of order $p^k$ for $p$ a prime and $k$ a positive integer, and $p^{k+\ell}$ denotes an extension $p^k.p^\ell$. 
We often use a subscript to indicate the order of a group element; for example, $g_5$ might denote an element of order $5$.

\subsection{Conjugacy classes in the Monster}\label{sec_MM}

The Monster $\MM$ has two conjugacy classes of involutions, denoted $2\mt{A}$ and $2\mt{B}$ in the Atlas. 
The respective involution centralisers are maximal subgroups of $\MM$ of the form $2 \udot \BB$, the double cover of the Baby Monster $\BB$, and $\Gx$, where $\mt{Co}_1$ is Conway's first sporadic simple group. 
We fix $z\in 2\mt{B}$ and write 
\[
\GG =C_\MM(z) \cong \Gx,
\] 
although we may sometimes abuse notation and write $\GG$ for some unspecified conjugate of $\Gx$. 
As explained in Section~\ref{sec2.3}, the \textcode{mmgroup} package includes special functionality for computing in a certain distinguished conjugate of $\GG$, namely the centraliser of the involution defined by the \textcode{mmgroup} labelling $\textcode{M<x\_1000h>}$. 
We therefore have in mind that our fixed $z$ is this specific involution. 

We also recall that the conjugacy classes of $\MM$ include exactly three classes of order-$3$ elements, $3\mt{A}$--$3\mt{C}$; two classes of order-$5$ elements, $5\mt{A}$ and $5\mt{B}$; six classes of order-$6$ elements, $6\mt{A}$--$6\mt{F}$; two classes of order-$7$ elements, $7\mt{A}$ and $7\mt{B}$; and two classes of order-$13$ elements, $13\mt{A}$ and $13\mt{B}$. 
The subgroup $\GG$ of $\MM$ has exactly seven classes of involutions, four classes of order-$3$ elements, three classes of order-$5$ elements, $22$ classes of order-$6$ elements, two classes of order-$7$ elements, and one class of order-$13$ elements. 
The group $\mt{Co}_1$ has exactly three classes of involutions, four classes of order-$3$ elements, three classes of order-$5$ elements, nine classes of order-$6$ elements, two classes of order-$7$ elements, and one class of order-$13$ elements. 
It is known how conjugacy classes are fused when comparing $\GG$ with its quotient $\mt{Co}_1$ and its overgroup $\MM$. 
This information is stored in {\sf GAP} \cite{GAPbc,gap}, and can be accessed as demonstrated in Listing~\ref{figgap1} in Appendix~\ref{app:listings}; for example, ``Output 1'' of Listing~\ref{figgap1} indicates that the (unique) class $13\mt{A}$ of order-$13$ elements in $\GG$ fuses to the class $13\mt{B}$ in $\MM$.

\subsection{Known subgroups of $\MM$ isomorphic to $\PSU_3(4)$ or $\PSL_2(q)$, $q \in \{8,13,16\}$} \label{sec_known_subgroups}

Per Remark~\ref{rem:assumptions}, the Monster contains various subgroups isomorphic to $\PSU_3(4)$ or $\PSL_2(q)$, $q \in \{8,13,16\}$, and it is known that any `new' such subgroup of $\MM$ must satisfy certain conjugacy class fusion restrictions.

Norton~\cite{N98} identified two non-conjugate subgroups of $\MM$ isomorphic to $\PSL_2(13)$.  
There is a $\PSL_2(13)$ with centraliser $3^{1+2}.2^2$ and normaliser contained in $(3^{1+2}.2^2\times {\rm G}_2(3)).2$, and a $\PSL_2(13)$ with centraliser of order $3$ and normaliser contained in $3\udot {\rm Fi}_{24}$ (the normaliser of a $3\mt{A}$-element). 
These subgroups are not maximal in $\MM$, and they do not extend to almost simple maximal subgroups. 
These results are stated more explicitly in \cite[Theorem 1]{W15}.
Norton and Wilson then showed that in every {as-yet-unclassified} $\PSL_2(13) < \MM$, elements of orders $2$, $7$, and $13$ must lie in the $\MM$-classes $2\mt{B}$, $7\mt{B}$, and $13\mt{A}$; see \cite[Table~3]{NW02} and \cite[Theorem~2]{W15}. 
Norton~\cite[Section~5]{N98} also identified three classes of $\PSL_2(8) < \MM$, all of which have non-trivial intersection with precisely the $\MM$-classes $2\mt{B}$, $3\mt{B}$, and $7\mt{A}$. 
These $\PSL_2(8)$ are not maximal in $\MM$, and they do not extend to almost simple maximal subgroups. 
By \cite[Table~3]{NW02}, every as-yet-unclassified $\PSL_2(8) < \MM$ must meet the $\MM$-classes $2\mt{B}$, $3\mt{B}$, and $7\mt{B}$.

Norton and Wilson~\cite{N98,NW02} have classified the subgroups of $\MM$ isomorphic to $\PSL_2(16)$ or $\PSU_3(4)$ whose elements of order $5$ belong to the $\MM$-class $5\mt{A}$. 
These subgroups are listed in \cite[Table~5]{N98}. 
There is one conjugacy class of each of $\PSL_2(16)$ and $\PSU_3(4)$ in $\MM$ containing $5\mt{A}$-elements. 
These groups are not maximal in $\MM$, and they do not extend to almost simple maximal subgroups. 
Every as-yet-unclassified subgroup of $\MM$ isomorphic to $\PSL_2(16)$ or $\PSU_3(4)$ must have all of its elements of order $5$ lying in the $\MM$-class $5\mt{B}$. 
By \cite[Table~3]{NW02}, such a subgroup must also satisfy certain other conjugacy class fusion restrictions. 
In particular, elements of order $2$ must belong to the $\MM$-class $2\mt{B}$, and in the case of $\PSU_3(4)$, elements of order $3$ must belong to the $\MM$-class $3\mt{C}$.

\subsection{Computing with the Monster in \textcode{mmgroup}} \label{sec2.3} 

As explained in Section~\ref{sec_intro}, \textcode{mmgroup} represents a breakthrough in computing with $\MM$, in the sense that the group operation can be performed efficiently for the first time. 
On the other hand, it currently lacks much of the functionality of computer algebra systems such as {\sf GAP} \cite{gap} or {\sc Magma} \cite{magma}, so a problem like the one that we are addressing here still poses significant technical difficulties to the user. 
We now summarise the relevant existing functionality of \textcode{mmgroup}, and explain how we overcome some of its current limitations. 

Listing~\ref{fig:mmgroup1} in Appendix~\ref{app:listings} gives some basic commands for computing in $\MM$ and its maximal subgroup $\GG \cong 2^{1+24}\udot \mathrm{Co}_1$. 
As noted in Section~\ref{sec_MM}, there is special functionality for computing in $\GG$ that does not apply to the whole of $\MM$. 
In particular, computation in $\GG$ is significantly faster than computation outside of $\GG$; see the section ``Computation in the subgroup G\_x0 of the Monster'' in \cite{sey_python}. 
Per Listing~\ref{fig:mmgroup1}, it is possible to generate random elements of $\MM$ and random elements of $\GG$, and to test for membership of an element of $\MM$ in $\GG$ and in the normal subgroup $\QQ \cong 2^{1+24}$ of $\GG$. 
Moreover, given $g \in \GG$, it is possible to compute the values of certain characters on $g$ using the method \textcode{chi\_G\_x0()}. 
This is extremely useful for distinguishing between certain conjugacy classes; see e.g. Section~\ref{sec_stdGG}. 
Given an involution $i \in \MM$, the method \textcode{conjugate\_involution()} determines which of the two $\MM$-classes of involutions $i$ belongs to, and returns an element $h$ conjugating $i$ to a `standard' element of $i^\MM$. 
If $i \in 2B$, then $i^h$ is the involution $z$ defined in Section~\ref{sec_MM}. 
This feature is of critical importance, because it allows us to compute as efficiently in the centraliser of an arbitrary $2\mt{B}$-involution as in the centraliser $\GG$ of the distinguished $2\mt{B}$-involution $z$. 
This strategy of ``changing post'' was developed and used extensively by Holmes and Wilson; see e.g. \cite[Section~1.4]{HW08} or \cite[Section~3]{black}.

\subsection{Performing certain calculations in {\sc Magma}} \label{secQ} 

There is another reason why it is preferable to carry out our calculations in the maximal subgroup $\GG$ of $\MM$ whenever possible. 

Let $\QQ$ denote the normal subgroup $2^{1+24}$ of $\GG$, and recall that $\QQ/Z(\QQ)\cong 2^{24}$ is isomorphic as a $\mt{Co}_1$-module to the Leech lattice modulo~$2$. 
In \textcode{mmgroup}, elements of $\QQ$ are labelled by the integers $0$ to $2^{25}-1$, such that elements whose labels are congruent modulo $2^{24}$ correspond to the same element in $\QQ/Z(\QQ)$. 
If \textcode{x} is such an integer, then the \textcode{mmgroup} command \textcode{MM(XLeech2(x))} returns an object representing the corresponding element in $\QQ$. 
Given $g \in \GG$, the Python function in Listing~\ref{fig:mmgroup2} in Appendix~\ref{app:listings} returns a $24\times 24$ matrix describing the action of $g$ on $2^{24}$. 
This yields a group homomorphism $\pi\colon \GG\to \GL_{24}(2)$ whose image is the $24$-dimensional Leech lattice representation of ${\rm Co}_1$. 

In Section~\ref{sec_stdGG}, we describe how to construct Holmes and Wilson's \cite{HW03} {\em standard generators} $a$~and~$b$ for $\GG$ in \textcode{mmgroup}. 
This enables us to use the matrices $A=\pi(a)$ and $B=\pi(b)$ to define $\pi(\GG) \cong \mt{Co}_1$ in {\sc Magma}. 
The {\sc Magma} algorithms resulting from the Matrix Group Recognition Project~\cite{CT} then allow us to perform many calculations in $\mt{Co}_1$. 
Given $w \in \mt{Co}_1$, the {\sc Magma} function {\sc InverseWordMap} can be used to write $w$ as a word, or, more precisely, a straight-line program (SLP), in the generators $A$ and $B$. 
(For a formal definition of SLPs, see \cite[Section~3.1.3]{handbook}.) 
Because $\pi$ maps $\{a,b\}$ to $\{A,B\}$, such SLPs allow us to compute preimages under $\pi$ efficiently: given an SLP in $\{A,B\}$ describing $w \in \mt{Co}_1$, the same SLP evaluated in $\{a,b\}$ yields $w' \in \GG$ such that $\pi(w')=w$. 
The full preimage of $w$ under $\pi$ is $w'\QQ$, and \textcode{mmgroup}'s membership test \textcode{in\_Q\_x0()} and its integer labelling of the elements in $\QQ$ allow us to modify $w'$ by elements of $\QQ$ to obtain coset representatives satisfying desired properties, if required. 
This is an efficient brute-force variant of the so-called ``applying the formula'' approach used extensively by Norton and Wilson; see e.g. \cite[Section~4.5]{NW13}.

\subsection{Finding standard generators for $\GG$}\label{sec_stdGG} 

Holmes and Wilson \cite{HW03} define standard generators for $\GG \cong \Gx$ to be elements $a$ and $b$ of order $4$ and $6$, respectively, that project to standard generators $a'$ and $b'$ of $\mt{Co}_1$ as defined in \cite{W96}. 
Specifically,  $a'$, $b'$, and $(a'b')^{-1}$ lie in the $\mt{Co}_1$-classes $2\mt{B}$, $3\mt{C}$, and $40\mt{A}$, respectively, and $a'b'a' (b')^2$ has order $6$. 
Output~2 in Listing~\ref{figgap1} shows that $6\mt{E}$ and $6\mt{F}$ are the only $\GG$-classes of order-$6$ elements that project to $3\mt{C}$ in $\mt{Co}_1$. 
This implies that $b \in 6\mt{E}\cup 6\mt{F}$. 
Similarly, $a$ and $(ab)^{-1}$ lie in $4\mt{F}\cup 4\mt{G}$ and $40\mt{F}\cup 40\mt{G}$ in $\GG$, respectively.
We constructed standard generators for $\GG$ in \textcode{mmgroup} as follows; one application of such a generating set is outlined in Remark~\ref{rem:N13}.

The aforementioned $\GG$-classes can be identified in \textcode{mmgroup} using \textcode{chi\_G\_x0()} and known class fusion information; cf. Section~\ref{sec_MM}. 
For example, Output~3 in Listing~\ref{figgap1} shows that the $\GG$-classes $4\mt{F}$ and $4\mt{G}$ are the only $\GG$-classes of order-$4$ elements on which the character $\chi_{299}$ defined in Listing~\ref{fig:mmgroup1} has value $-13$. 
The classes $6\mt{E}$, $6\mt{F}$, and $6\mt{U}$ are the only $\GG$-classes of order-$6$ elements with $\chi_{299}$-value $2$. 
They are distinguished by the character $\chi_\MM$ for $\MM$ defined in Listing~\ref{fig:mmgroup1}: $6\mt{E}$ and $6\mt{F}$ fuse to the $\MM$-classes $6\mt{E}$ and $6\mt{D}$, with $\chi_\MM$-values $5$ and $-3$, respectively, while $6\mt{U}$ fuses to the $\MM$-class $6\mt{C}$, with $\chi_\MM$-value $14$. 
We used a random search in $\GG$ to find elements that power to elements $a$ and $b$ of the form described above. 
Using the code in Listing~\ref{fig:mmgroup2} and {\sc Magma}, we confirmed that $\pi(a),\pi(b) \in \GL_{24}(2)$ generate $\mt{Co}_1$, where $\pi \colon \GG \rightarrow \text{GL}_{24}(2)$ is the homomorphism described in Section~\ref{secQ}. 
To show that $\langle a,b\rangle=\GG\cong\QQ\udot\mt{Co}_1$, we adapted the approach of \cite[Section 4.1]{NW13}. 
We verified that the elements $j_i=(a^2)^{(ab)^i}$, where $i=0,\ldots,23$, lie in $\QQ$ and form a basis of $\QQ/Z(\QQ) \cong 2^{24}$. 
We also verified that $[a^2,b^3]=z\in Z(\GG)$, so that $\GG=\langle a,b\rangle$. 
Our standard generators for $\GG$ are shown in Listing~\ref{fig:std_ab} in Appendix~\ref{app:listings}; the supporting calculations described here are documented in our GitHub repository \cite{ourfile}. 
(We have not checked whether our pair of standard generators for $\GG$ is conjugate to the pair used by Holmes and Wilson, but this property --- or lack thereof --- has no bearing on our proofs.)

\subsection{Auxiliary computational group theory algorithms} \label{ss:algo}

We have complemented \textcode{mmgroup} with our own Python implementations of standard algorithms for finding `random' elements in a subgroup $H$ of $\MM$ and for enumerating $H$, assuming that $H$ is given by a set of generators. 
We make no claims as to the efficiency of our implementations, nor to the `randomness' of our product replacement algorithm, but our code was sufficient for our purposes and  is included in \cite{ourfile}. 

The product replacement algorithm is used to obtain nearly uniformly distributed random elements in $H$; see \cite{CLMNO95} or \cite[Section 3.2.2]{handbook}. 
Given an ordered generating set $[x_1,\ldots,x_n]$ for $H$, the algorithm iteratively replaces $x_i$ by $x_ix_j^{\pm 1}$ or by $x_j^{\pm 1}x_i$ for random $i\ne j$. 
After a certain number of replacements, the new value of $x_i$ is returned. 
It is proved by Celler et al.~\cite{CLMNO95} that this process eventually produces uniformly distributed random elements of $H$.

Given a generating set $X$ for $H$, an orbit computation can be used to construct all elements of $H$, and, in particular, to determine $|H|$. 
Starting with $S=\{1\}\subseteq H$, for each $s\in S$ we run over all $x\in X$ and compute $s'=sx$. If $s'\notin S$, then $s'$ is added to $S$. 
The process terminates if, for the last $s\in S$, no new elements $sx$ have been found. 
This implies that $S$ is closed under multiplication by elements from $X$, and it follows that $S=H$; see \cite[Section 4.1]{handbook}. 
If we instead seek to construct a generating set $X$ for a subgroup of order $n$, then we can use this method to check whether $|\langle X\rangle|=n$. 
Obviously, this process can be aborted if we find that $|S|>n$. 
Given that we enumerate the group generated by $X$, this method is only feasible for reasonably small group orders.


\section{Proof of Theorem~\ref{thmL213}} \label{sec_PSLPGL}

\noindent The group $P=\PSL_2(13)$ contains (exactly) two conjugacy classes of elements of order $13$, three classes of elements of order $7$, and unique classes of elements of orders $2$, $3$, and $6$. 
The character table of $\MM$, which is available in {\sf GAP}~\cite{GAPbc,gap}, shows that for every $g \in \MM$ of order $p \in \{7,13\}$, the non-trivial powers of $g$ lie in a single conjugacy class; equivalently, the $\MM$-classes $p\mt{A}$ and $p\mt{B}$ are rational, i.e. all complex irreducible characters of $\MM$ take rational (and therefore integer) values on these classes. 
Because the Sylow $7$- and $13$-subgroups of $P$ are cyclic of prime order, it follows that in every subgroup of $\MM$ isomorphic to $P$, all elements of each order $p \in \{7,13\}$ are conjugate in $\MM$.

Up to conjugacy, the maximal subgroups of $P$ are $\mt{D}_{14}$, $\mt{D}_{12}$, $\mt{A}_4$, and $13{:}6$ where the $6$ acts faithfully. 
We seek to generate subgroups of $\MM$ isomorphic to $P$ via elements $g_{13},g_6,j_2 \in \MM$ satisfying $|g_{13}|=13$, $|g_6|=6$, $|j_2|=2$, $\langle g_{13},g_6\rangle\cong 13{:}6$, and $\langle g_6,j_2\rangle\cong \mt{D}_{12}$, the final condition being equivalent to $j_2$ inverting $g_6$ by conjugation. 
As explained in Section~\ref{sec_known_subgroups}, we can assume that $g_{13}$ is a fixed element belonging to the $\MM$-class $13\mt{A}$. 
We first exhibit such an element. 
Recall  that the normaliser of a $13\mt{A}$-element in $\MM$ is has shape $((13{:}6)\times \PSL_3(3)). 2$, while the normaliser of a $13\mt{B}$-element has shape $13^{1+2}{:}(3 \times 4\udot \mt{S}_4)$; see \cite[Section~11]{W88}. 
The respective centraliser orders are $2^4{\cdot}3^3{\cdot}13^2$ and $2^3{\cdot}3{\cdot}13^3$. 

\begin{proposition} \label{prop_13a}
The element $g_{13}$ defined in \textcode{mmgroup} format in Listing~\ref{fig:std_PGL} belongs to the $\MM$-class $13\mt{A}$. 
The elements $g_{13}$, $y_6$, $c$, and $d$ defined in Listing~\ref{fig:std_PGL} generate the index-$2$ subgroup $(13{:}6)\times \PSL_3(3)$ of $N_\MM(\langle g_{13} \rangle)$. 
Specifically, $\langle g_{13},y_6 \rangle \cong 13{:}6$, $\langle c,d \rangle \cong \PSL_3(3)$, and these two groups commute. 
\end{proposition}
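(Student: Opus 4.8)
The plan is to verify the claimed isomorphism type and structure of the group $\langle g_{13}, y_6, c, d\rangle$ by a sequence of explicit \textcode{mmgroup} computations, reducing everything to arithmetic in $\MM$ that the package can perform in milliseconds. First I would confirm the orders of the named generators: check that $|g_{13}| = 13$, that $y_6$ has order $6$ or a suitable order (and that the subgroup $\langle g_{13}, y_6\rangle$ has order $78$, e.g.\ by running the orbit-enumeration algorithm of Section~\ref{ss:algo} on these two generators and observing $|S| = 78$, together with the relation $y_6^{-1} g_{13} y_6 = g_{13}^{k}$ for an appropriate primitive $k$ modulo $13$, which forces the extension to be $13{:}6$ with faithful action). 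Separately I would check $|c|$, $|d|$, and that $\langle c,d\rangle$ has order $|\PSL_3(3)| = 5616$; again the orbit algorithm suffices here since this order is small, and the isomorphism type is then pinned down by the order alone among groups generated by two elements of the observed orders together with a couple of short relations, or more robustly by recognising $\langle \pi'(c), \pi'(d)\rangle$ inside a suitable faithful representation.

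Next I would establish that the two subgroups commute: it is enough to verify $[g_{13}, c] = [g_{13}, d] = [y_6, c] = [y_6, d] = 1$ as elements of $\MM$, which is four equality tests in \textcode{mmgroup}. Commutativity of the generators implies $\langle g_{13}, y_6\rangle$ centralises $\langle c, d\rangle$ and vice versa, so $\langle g_{13}, y_6, c, d\rangle = \langle g_{13}, y_6\rangle \times \langle c, d\rangle \cong (13{:}6) \times \PSL_3(3)$, since the two factors intersect trivially (their orders $78$ and $5616$ are coprime). That this product sits inside $N_\MM(\langle g_{13}\rangle)$ is automatic once $|g_{13}| = 13$: each of $y_6$, $c$, $d$ normalises $\langle g_{13}\rangle$ (for $y_6$ by the power relation just checked, and for $c, d$ because they centralise $g_{13}$). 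Finally, the index-$2$ claim follows from the known shape $N_\MM(\langle g_{13}\rangle) \cong ((13{:}6)\times\PSL_3(3)).2$ recalled from \cite[Section~11]{W88}, combined with the order computation $|\langle g_{13}, y_6, c, d\rangle| = 78 \cdot 5616 = |(13{:}6)\times\PSL_3(3)|$.

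The first sentence of the proposition — that $g_{13} \in 13\mt{A}$ — should be handled by the conjugacy-class discrimination tools already described in the excerpt: there are only two classes $13\mt{A}$ and $13\mt{B}$, with centraliser orders $2^4\cdot 3^3\cdot 13^2$ and $2^3\cdot 3\cdot 13^3$ respectively, so it suffices to distinguish them. One clean route is to note that $c$ and $d$ already exhibit a $\PSL_3(3)$ centralising $g_{13}$; since $|\PSL_3(3)| = 5616 = 2^4\cdot 3^3\cdot 13$ is not a divisor of $2^3\cdot 3\cdot 13^3$, the element $g_{13}$ cannot lie in $13\mt{B}$, hence lies in $13\mt{A}$. (Alternatively, if $g_{13}$ can be conjugated into $\GG$, apply \textcode{chi\_G\_x0()} and the fusion data of Section~\ref{sec_MM}, using that $\GG$ has a unique class of order-$13$ elements which fuses to $13\mt{B}$ in $\MM$ — so one would want $g_{13}$ to \emph{not} be $\GG$-conjugate into that situation; the centraliser-order argument is cleaner and I would use it.)

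The main obstacle I anticipate is the enumeration of $\langle c, d\rangle$ to certify $|\langle c,d\rangle| = 5616$: while $5616$ is small enough for the naive orbit algorithm of Section~\ref{ss:algo}, each group operation takes place in $\MM$ (not in $\GG$), so it is slower than in-$\GG$ arithmetic, and one must be careful that the enumeration genuinely terminates with the set closed under multiplication by the generators. In practice this is routine but it is the step with the largest computational footprint; everything else is a bounded number of element comparisons and power computations. A secondary subtlety is ensuring that $13{:}6$ rather than $13{:}2$ or $13{:}3$ is obtained — this is settled by exhibiting that $y_6$ acts on $\langle g_{13}\rangle$ with an automorphism of order exactly $6$, equivalently that $y_6^{-1}g_{13}y_6 = g_{13}^k$ for $k$ of multiplicative order $6$ modulo $13$ (e.g.\ $k \in \{4, 10\}$), which is a single exponent check.
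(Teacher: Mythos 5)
Your overall strategy is the same as the paper's: verify orders and relations directly in \textcode{mmgroup}, enumerate $\langle c,d\rangle$ to obtain order $5616$, deduce $g_{13}\in 13\mt{A}$ because $5616=2^4{\cdot}3^3{\cdot}13$ does not divide the $13\mt{B}$-centraliser order $2^3{\cdot}3{\cdot}13^3$, and obtain the index-$2$ statement from the known shape $N_\MM(\langle g_{13}\rangle)\cong((13{:}6)\times\PSL_3(3)).2$ of \cite{W88}. However, two of your steps do not hold up as written. First, your justification of trivial intersection of the two factors --- ``their orders $78$ and $5616$ are coprime'' --- is false: $78=2{\cdot}3{\cdot}13$ divides $5616=2^4{\cdot}3^3{\cdot}13$. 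The conclusion is still true, but needs a different reason, for instance that any element of $\langle g_{13},y_6\rangle\cap\langle c,d\rangle$ centralises $\langle g_{13},y_6\rangle$ and therefore lies in the centre of a $13{:}6$ with faithful action, which is trivial.

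Second, and more seriously, you never actually establish $\langle c,d\rangle\cong\PSL_3(3)$. The order $5616$ alone does not determine the isomorphism type; ``a couple of short relations'' is left unspecified (a Von Dyck-style presentation check would work, but you would have to exhibit and verify the relators); and the proposed recognition of $\langle\pi'(c),\pi'(d)\rangle$ in ``a suitable faithful representation'' is not available off the shelf, since the only homomorphism at hand, $\pi\colon\GG\to\GL_{24}(2)$, applies to elements of $\GG$, whereas $\langle c,d\rangle$ centralises a $13\mt{A}$-element and such elements do not lie in $\GG$. The paper closes exactly this gap with a short indirect argument you could adopt: once $g_{13}\in 13\mt{A}$ is known (which, as you note, requires only the order of $H=\langle c,d\rangle$, not its isomorphism type), one has $C_\MM(g_{13})\cong 13\times\PSL_3(3)$, so $H$ has index $13$ in this centraliser; an \textcode{mmgroup} check that $g_{13}\notin H$ gives $H\cap\langle g_{13}\rangle=1$, whence $H$ projects injectively onto the $\PSL_3(3)$ factor and $H\cong\PSL_3(3)$ by comparing orders. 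With these two repairs your argument coincides with the paper's proof.
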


\begin{proof}
A direct calculation in \textcode{mmgroup} shows that $|g_{13}|=13$ and $|y_6|=6$, and that $\langle g_{13},y_6 \rangle$ has the form $13{:}6$ and is centralised by $c$ and $d$. 
Enumerating $H=\langle c,d \rangle$ as described in Section~\ref{ss:algo} shows that it has order $5616 = 2^4{\cdot}3^3{\cdot}13 = |\PSL_3(3)|$. 
In particular, $|H|$ does not divide the order of a $13\mt{B}$-centraliser, so $g_{13} \in 13\mt{A}$. 
On the other hand, $H$ has index $13$ in $C_\MM(g_{13}) \cong 13 \times \PSL_3(3)$, and an \textcode{mmgroup} calculation confirms that $H$ does not contain $g_{13}$, so $H \cong \PSL_3(3)$.
\end{proof}

We comment in Remark~\ref{rem:N13} on how we found the element $g_{13} \in 13\mt{A}$ and the generators for the index-$2$ subgroup $N \cong (13{:}6) \times \PSL_3(3)$ of $N_\MM(\langle g_{13} \rangle)$ given in Proposition~\ref{prop_13a}.

The next step is to construct representatives of the conjugacy classes of subgroups $13\mt{A}{:}6$ of $\MM$, where the notation $13\mt{A}$ emphasises that the $13$ is of class $13\mt{A}$ (and the $6$ acts faithfully, as before). 
Up to conjugacy, every such subgroup of $\MM$ is contained in $N$. 
This can be verified in {\sf GAP} by constructing a permutation representation of $N_\MM(\langle g_{13} \rangle)$ as {\sf AtlasGroup("(13:6xL3(3)).2")}.

\begin{proposition}\label{lem_13:6}
There are exactly five conjugacy classes of subgroups of type $13\mt{A}{:}6$ in $\MM$, represented by the groups $\langle g_{13},y_6x \rangle$ where $x\in\{1,x_6,x_6^2,x_6^3,x_3\}$ with $g_{13}$, $y_6$, $x_6$, and $x_3$ defined as in Listing~\ref{fig:std_PGL}. 
The elements $g_6 = y_6x$ lie in the $\MM$-classes $6\mt{B}$, $6\mt{E}$, $6\mt{E}$, $6\mt{E}$, and $6\mt{F}$, respectively, and their cubes lie in $2\mt{B}$.
\end{proposition}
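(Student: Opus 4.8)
The plan is to turn the classification into a finite computation inside $N=N_\MM(\langle g_{13}\rangle)\cong((13{:}6)\times\PSL_3(3)).2$, using the explicit generators supplied by Proposition~\ref{prop_13a}. The first step is a reduction. Any subgroup $K\leq\MM$ of type $13\mt{A}{:}6$ has a unique normal subgroup $\langle h\rangle$ of order $13$ with $h\in 13\mt{A}$, so $K\leq N_\MM(\langle h\rangle)$; since $13\mt{A}$ is rational (as recorded at the start of Section~\ref{sec_PSLPGL}), $\langle h\rangle$ is $\MM$-conjugate to $\langle g_{13}\rangle$, so $K$ is $\MM$-conjugate to a subgroup of $N$ whose normal $13$-subgroup is $\langle g_{13}\rangle$. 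Conversely, any element of $\MM$ conjugating two such subgroups of $N$ to one another must normalise $\langle g_{13}\rangle$, hence lies in $N$. So it suffices to classify, up to $N$-conjugacy, the subgroups $K\leq N$ with $\langle g_{13}\rangle\trianglelefteq K\cong 13{:}6$ and the factor $6$ acting faithfully.

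The second step parametrises these subgroups. With $C=C_\MM(g_{13})=\langle g_{13}\rangle\times\PSL_3(3)$, where $\PSL_3(3)=\langle c,d\rangle$ as in Proposition~\ref{prop_13a}, one has $N/C\cong\mathrm{Aut}(\mathbb{Z}/13)\cong\mathbb{Z}/12$. A complement generator of such a $K$ acts faithfully on $\langle g_{13}\rangle$, hence projects onto a generator of the order-$6$ subgroup of $N/C$, just as $y_6$ does; absorbing the $\langle g_{13}\rangle$-part of this generator (and replacing it by its inverse if necessary) shows $K=\langle g_{13},y_6p\rangle$ for some $p\in\PSL_3(3)$. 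Because $y_6$ centralises $\PSL_3(3)$, the element $y_6p$ has order $\mathrm{lcm}(6,|p|)$, so $K\cong 13{:}6$ exactly when $|p|$ divides $6$, and then $\langle g_{13},y_6p\rangle$ has order $78$ and is of the required type. Conjugating $\langle g_{13},y_6p\rangle$ within $N$ — by $\PSL_3(3)$, by $\langle y_6\rangle$, and by the outer involution, re-absorbing $\langle g_{13}\rangle$-parts each time — shows that the $N$-class of $K$ depends only on the $\PSL_3(3)$-class of $p$, modulo inversion and the automorphism of $\PSL_3(3)=\langle c,d\rangle$ induced by the outer element of $N$.

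The third step is the count. The group $\PSL_3(3)$ has exactly five conjugacy classes of elements of order dividing $6$: the identity, the unique class of involutions, the two classes of (unipotent) elements of order $3$, and the unique class of elements of order $6$. Each of these is real and invariant under $\mathrm{Aut}(\PSL_3(3))$, so the equivalence from the previous paragraph acts trivially, and there are exactly five $N$-classes, hence five $\MM$-classes. I would then verify in \textcode{mmgroup} — or, as a cross-check, in {\sf GAP} using {\sf AtlasGroup("(13:6xL3(3)).2")} — that $1,x_6,x_6^2,x_6^3,x_3$ lie in $\langle c,d\rangle$, have orders $1,6,3,2,3$, and that $x_6^2$ and $x_3$ lie in the two distinct order-$3$ classes, so that the five subgroups $\langle g_{13},y_6x\rangle$ represent all five $\MM$-classes. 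Finally, for each $g_6=y_6x$ I would compute in \textcode{mmgroup} that $|g_6|=6$, that $g_6^3$ is an involution lying in $2\mt{B}$ (via \textcode{conjugate\_involution()}), and — using \textcode{chi\_G\_x0()} together with the class-fusion data of Section~\ref{sec_MM} — that $g_6$ lies in $6\mt{B}$, $6\mt{E}$, $6\mt{E}$, $6\mt{E}$, $6\mt{F}$ respectively.

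I expect the main obstacle to be the reduction in the first step rather than any individual calculation: one must be careful that classification up to $\MM$-conjugacy really is equivalent to classification inside $N$ up to $N$-conjugacy, which relies on the rationality of $13\mt{A}$ and on keeping track of exactly which conjugating elements normalise $\langle g_{13}\rangle$. Once that is in place, the parametrisation by $\PSL_3(3)$-classes and the check that the outer element of $N$ fixes all five relevant classes are routine, and the $\MM$-class identifications, while technically delicate in \textcode{mmgroup}, are of the same kind as those already carried out in Section~\ref{sec_stdGG}.
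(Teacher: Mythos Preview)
Your proposal is correct and follows essentially the same route as the paper: parametrise the $13\mt{A}{:}6$ subgroups by the five $\PSL_3(3)$-classes of elements of order dividing $6$, then verify the specific representatives and their $\MM$-class fusion via \textcode{conjugate\_involution()} and \textcode{chi\_G\_x0()}. Your treatment is in fact slightly more careful than the paper's on one point --- you explicitly argue that the outer involution of $N_\MM(\langle g_{13}\rangle)$ cannot fuse any of the five classes (via reality and $\mathrm{Aut}(\PSL_3(3))$-invariance), whereas the paper's final sentence leaves this implicit; your ``modulo inversion'' clause, however, is unnecessary (though harmless), since the parametrisation $K\mapsto p$ is already well-defined without it.
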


\begin{proof}
The group $H = \langle c,d \rangle \cong \PSL_3(3)$ has (exactly) five classes of elements of order dividing $6$: the identity, two classes of elements of order $3$, and unique classes of elements of orders $2$ and $6$. 
Hence, there are five $N$-classes of subgroups of type $13\mt{A}{:}6$, where $N \cong (13{:}6) \times \PSL_3(3)$ is the index-$2$ subgroup of $N_\MM(\langle g_{13} \rangle)$. 
These classes are represented by $\langle g_{13}, y_6x \rangle$, where $g_{13}$ and $y_6$ are defined as in Proposition~\ref{prop_13a}, and $x\in\{1,x_6,x_6^2,x_6^3,x_3\}$ with $x_6,x_3 \in H$ of orders $6$ and $3$, respectively, and $x_3$ not conjugate to $x_6^2$ in $H$. 
We need to show that the elements $x_6$ and $x_3$ given in Listing~\ref{fig:std_PGL} satisfy these criteria. 
An \textcode{mmgroup} calculation shows that $|x_6|=6$ and $|x_3|=3$, and that each element $y_6x$ acts faithfully on $\langle g_{13} \rangle$, and one can check that $x_6,x_3 \in H$ by enumerating $H$. 
It remains to show that $x_6^2$ and $x_3$ are not conjugate in $H$. 
This and the final assertion can be verified in \textcode{mmgroup} as follows. 
In each of the five cases, \textcode{conjugate\_involution()} shows that $g_6^3 \in 2\mt{B}$ and yields an element $h \in \MM$ such that $(g_6^3)^h = z \in Z(\GG)$. 
In particular, $g_6^h \in \GG$ because $g_6$ commutes with $g_6^3$, so \textcode{chi\_G\_x0()} can be used to determine the $\MM$-class of $g_6$. 
It suffices to evaluate the character $\chi_\MM$ of the $196883$-dimensional $\mathbb{C}\MM$-module on each $g_6^h$, because $\chi_\MM$ distinguishes the $\MM$-classes of elements of order $6$. 
For $x=1$, we find that $\chi_\MM(g_6^h) = 77$, which indicates that $g_6 \in 6\mt{B}$. 
For $x \in \{x_6,x_6^2,x_6^3\}$, we have $\chi_\MM(g_6^h) = 5$, so $g_6 \in 6\mt{E}$. 
For $x=x_3$, we have $\chi_\MM(g_6^h) = -1$, so $g_6 \in 6\mt{F}$. 
In particular, $x_3$ is not conjugate to $x_6^2$ in $H$ because $y_6$ commutes with $H$. 
It remains to check that the groups $T_x = \langle g_{13}, y_6x \rangle \cong 13\mt{A}{:}6$ are not conjugate in $\MM$. 
If $(T_x)^h=T_{x'}$ with $x,x' \in \{1,x_6,x_6^2,x_6^3,x_3\}$ for some $h\in\MM$, then $h$ normalises $\langle g_{13} \rangle$, so $x=x'$ because $1$, $x_6$, $x_6^2$, $x_6^3$, and $x_3$ belong to distinct $H$-classes. 
\end{proof}

Next, we attempt to extend each of the five groups $\langle g_{13},g_6 \rangle \cong 13\mt{A}{:}6$ given in Proposition~\ref{lem_13:6} to a subgroup of $\MM$ isomorphic to $P = \PSL_2(13)$. 
Recall that we can assume that all involutions in our subgroups are of class $2\mt{B}$; see Section~\ref{sec_known_subgroups}. 
We first find the involutions $j_2 \in \MM$ such that $\langle g_6,j_2 \rangle$ is isomorphic to $\mt{D}_{12}$ and contains only $2\mt{B}$-involutions, and then test whether each $\langle g_{13},g_6,j_2 \rangle \cong P$. 

\begin{lemma} \label{lemma:classmult1}
If $h_6 \in \MM$ lies in class $6\mt{B}$, then there are exactly $14152320$ involutions $j_2 \in 2\mt{B}$ such that $\langle h_6,j_2 \rangle$ is isomorphic to $\mt{D}_{12}$ and contains only $2\mt{B}$-involutions. 
If $h_6 \in 6\mt{E}$, then there are exactly $466560$ such involutions. 
If $h_6 \in 6\mt{F}$, then there are exactly $91530$ such involutions. 
\end{lemma}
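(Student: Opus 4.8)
The plan is to identify the count with a class multiplication (structure) constant in the character table of $\MM$, which can then be read off from {\sf GAP}. The first step is to translate the hypotheses on $j_2$ into purely conjugacy-class data. For an involution $j_2$ and our fixed $h_6$ of order $6$, note that $j_2$ inverts $h_6$ by conjugation if and only if $(j_2h_6)^2=1$, i.e. $k_2:=j_2h_6$ is an involution, and in that case $\langle h_6,j_2\rangle=\langle h_6\rangle\rtimes\langle j_2\rangle\cong\mt{D}_{12}$: indeed the unique involution $h_6^3$ of $\langle h_6\rangle$ centralises rather than inverts $h_6$, so $j_2\notin\langle h_6\rangle$, and $\langle h_6,j_2\rangle$ has order $12$ and is dihedral. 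Conversely, every involution of a dihedral group of order $12$ lying outside its index-$2$ cyclic subgroup inverts the generators of that subgroup. Hence ``$\langle h_6,j_2\rangle\cong\mt{D}_{12}$'' is equivalent to ``$j_2h_6$ is an involution'', and then $j_2k_2=h_6$.

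Next I would determine which class conditions encode the requirement that $\langle h_6,j_2\rangle\cong\mt{D}_{12}$ contain only $2\mt{B}$-involutions. Its seven involutions are $h_6^3$ together with the six reflections $j_2h_6^{\,i}$ ($0\le i\le 5$); conjugation by $\langle h_6\rangle$ sends $j_2h_6^{\,i}$ to $j_2h_6^{\,i+2}$, so the reflections form two $\langle h_6\rangle$-orbits, with representatives $j_2$ (for $i$ even) and $k_2=j_2h_6$ (for $i$ odd). Since $h_6^3\in2\mt{B}$ by Proposition~\ref{lem_13:6}, the group $\langle h_6,j_2\rangle$ contains only $2\mt{B}$-involutions precisely when $j_2\in 2\mt{B}$ and $k_2\in2\mt{B}$. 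Combining this with the previous paragraph, the involutions $j_2$ counted by the lemma are exactly the first coordinates of pairs $(x,y)\in 2\mt{B}\times2\mt{B}$ with $xy=h_6$: the map $j_2\mapsto(j_2,j_2h_6)$ is a bijection onto this set of pairs, with inverse $(x,y)\mapsto x$ (here $x$ inverts $h_6$ because $x$ and $y$ are involutions with $xy=h_6$, whence $xh_6x=yx=(xy)^{-1}=h_6^{-1}$, and $xh_6=y\in2\mt{B}$).

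The number of such pairs, for $h_6$ in a fixed $\MM$-class $6X$, is the class algebra constant
\[
a_{2\mt{B},\,2\mt{B},\,6X}=\frac{|2\mt{B}|^2}{|\MM|}\sum_{\chi\in\mathrm{Irr}(\MM)}\frac{\chi(2\mt{B})^2\,\overline{\chi(6X)}}{\chi(1)},
\]
which depends only on the three classes involved and is returned by {\sf GAP}'s function {\sf ClassMultiplicationCoefficient} applied to the (publicly available) character table of $\MM$. Evaluating this for $6X\in\{6\mt{B},6\mt{E},6\mt{F}\}$, the classes arising in Proposition~\ref{lem_13:6}, yields the claimed values $14152320$, $466560$, and $91530$, respectively.

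There is no serious obstacle here: a direct enumeration in \textcode{mmgroup} is out of the question at this scale, so the argument is necessarily character-theoretic, and the only point needing care is the reduction in the second paragraph --- isolating the correct pair of class conditions on $j_2$ (using the already-established fact $h_6^3\in2\mt{B}$) and verifying that the correspondence with pairs $(x,y)$ is genuinely a bijection. The computation itself rests only on the correctness of the {\sf GAP} character table library for $\MM$.
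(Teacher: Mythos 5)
Your proposal is correct and follows essentially the same route as the paper: both identify the required involutions $j_2$ with pairs of $2\mt{B}$-involutions whose product is the fixed $h_6$ (using that $h_6^3\in 2\mt{B}$ and that the two reflection classes of the $\mt{D}_{12}$ are represented by $j_2$ and $j_2h_6$), and then read off the $(2\mt{B},2\mt{B},6\mt{X})$ class multiplication coefficient from the character table of $\MM$ in {\sf GAP}. The only difference is cosmetic: you verify the bijection in detail, whereas the paper states it in one line (invoking that order-$6$ elements are conjugate to their inverses), so no substantive gap either way.
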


\begin{proof}
All elements of order $6$ in $\MM$ are conjugate to their inverses, so the required involutions are in one-to-one correspondence with the pairs of $2\mt{B}$-involutions whose product equals $h_6$. 
For each $\mt{X} \in \{\mt{B},\mt{E},\mt{F}\}$, the total number of such involutions is equal to the $(2\mt{B},2\mt{B},6\mt{X})$ class multiplication coefficient of $\MM$, which can be calculated in {\sf GAP} as demonstrated in Listing~\ref{figgap1}.
\end{proof}

For a given $g_6$, the involutions $j_2$ can all be found in $N_\MM(\langle g_6 \rangle)$. 
However, per the discussion in Section~\ref{sec_prel}, we have no easy way of constructing these groups, so we proceed as follows. 
Consider the projection $\pi : \GG \to {\rm Co}_1\leq \GL_{24}(2)$ described in Section \ref{secQ}. 
We know that $g_6^3 \in 2\mt{B}$, so $N_\MM(\langle g_6 \rangle) \leq C_\MM(g_6^3) \cong \GG$. 
We use \textcode{conjugate\_involution()} to find $h\in\MM$ such that $(g_6^3)^h=z\in Z(\GG)$, compute $g_6'=g_6^h\in\GG$, construct generators of the normaliser of $\langle \pi(g_6') \rangle$ in ${\rm Co}_1\leq \GL_{24}(2)$ in {\sc Magma}, lift these generators back to $\GG$, and adjust the preimages by elements of $\QQ$ to obtain coset representatives that normalise $\langle g_6' \rangle$. 
This gives us generators for a subgroup $Y'$ of $\GG$ that normalises $\langle g_6' \rangle$. 
The element $g_6'$ is also centralised by some elements of $\QQ$, namely those representing vectors that are fixed by $\pi(g_6')$ in the action of $\mt{Co}_1$ on $\QQ/Z(\QQ)\cong 2^{24}$. 
We compute a basis for the $1$-eigenspace of $\pi(g_6')$, take the corresponding elements of $\QQ$, and add these to our generating set for $Y'$. 
Conjugating by $h^{-1}$ gives us generators for a subgroup $Y$ of $N_\MM(\langle g_6 \rangle)$. 
We then find the involutions $j_2$ in $Y$ via random search, as described in Section~\ref{ss:algo}. 
Lemma~\ref{lemma:classmult1} tells us when we have found all of them. 

\begin{remark} \label{remPSL2(13)}
For brevity, we do not attempt to justify whether $Y$ is equal to $N_\MM(\langle g_6 \rangle)$. 
This is not necessary, given that we are able to find all of the involutions $j_2$.  
Our files containing the $j_2$ are too large (up to 8GB) to upload to our GitHub repository \cite{ourfile}, but the code given there includes generators for each $Y$, so the reader wishing to reproduce our proof can recover the $j_2$ via random search in $Y$. 
\end{remark}

The next step is to check which of the involutions $j_2$ extend $\langle g_{13},g_6 \rangle$ to a group isomorphic to $P$. 
Up to isomorphism, $P$ is the unique group of order $1092$ that has subgroups isomorphic to $13{:}6$ and $\mt{D}_{12}$, so it suffices to enumerate each group $S=\langle g_{13},g_6,j_2 \rangle$ and check its order. 
This, however, is not particularly efficient, so in practice we used the following observation: if $S \cong P$, then the multi-set $\{|t_2g_{13}|: t_2 \text{ is a non-central involution in } \langle g_6,j_2\rangle \}$ is equal to $\{3,6,7,7,7,13\}$. 
That is, we only checked whether $|S|=|P|$ if $S$ first satisfied this criterion. 
We also increased the efficiency of our computations in other ways: we noted that if some $j_2$ fails to extend $\langle g_{13},g_6 \rangle$ to $P$, then so does every involution in $\langle g_6,j_2 \rangle$ and every $C_\MM(\langle g_{13},g_6 \rangle)$-conjugate of such an involution; we ran certain computations in parallel using Python's \textcode{multiprocessing} package; and, to speed up look-ups in lists, we stored \textcode{mmgroup} elements \textcode{g} as the hashable objects \textcode{tuple(g.reduce().as\_tuples())}, which depend only on the group element defined by \textcode{g} and not on the representation of that element as a word in the generators for $\MM$ used in \textcode{mmgroup}. 
The outcome of these computations is as follows.

In the cases $g_6\in\{y_6x_6^2,y_6x_6^3,y_6x_3\}$, there are no involutions $j_2$ such that $S = \langle g_{13},g_6,j_2 \rangle$ is isomorphic to $P = \PSL_2(13)$.
For $g_6=y_6$, there are exactly $312$ involutions $j_2$ such that $S \cong P$. 
We claim that the $312/6 = 52$ distinct groups $S$ are all conjugate in $\MM$ and have centralisers of order $108$. 
By construction, the centraliser $C = C_\MM(T)$ of $T = \langle g_{13},y_6 \rangle<S$ is the group $H \cong \PSL_3(3)$ generated by the elements $c$ and $d$ in Listing~\ref{fig:std_PGL}. 
A direct calculation in \textcode{mmgroup} shows that exactly $108$ elements of $H$ also centralise $j_2$ in each case, so $|C_\MM(S)|=108$.
If some element of $C$ normalises $S$, then it must centralise $S$, because the only automorphism of $P$ that centralises a maximal $13{:}6$ is the identity. 
The stabiliser of $S$ under conjugation by $C$ is therefore contained in $C_\MM(S)$, so the length of the $C$-orbit containing $S$ is at least, and hence exactly, $|C|/|C_\MM(S)| = 5616/108 = 52$. 
Finally, in the case $g_6=y_6x_6$, there are exactly $48$ involutions $j_2$ such that $S \cong P$, but the groups $S$ fall into two $\MM$-classes. 
Because $g_6 = y_6x_6$ and $x_6 \in H$ commutes with $y_6$, the group $C = C_\MM(T)$ is equal to $C_H(x_6) = \langle x_6 \rangle \cong 6$, so $|C_\MM(S)|$ is equal to the number of distinct powers of $x_6$ that centralise $j_2$. 
An \textcode{mmgroup} calculation shows that $12$ of the $48$ involutions $j_2$ yield $|C_\MM(S)|=3$, and the remaining $36$ yield $|C_\MM(S)|=1$. 
The same argument as above shows that the two distinct groups $S$ in the first case are conjugate in $\MM$, and that the six distinct groups $S$ in the second case are conjugate in $\MM$.

The groups $S \cong P = \PSL_2(13)$ with non-trivial centralisers comprise the two known classes of subgroups of $\MM$ isomorphic to $P$ discussed in Section~\ref{sec_known_subgroups}. 
We now show that the groups $S$ with trivial centralisers extend to maximal subgroups of $\MM$ isomorphic to $\text{Aut}(\PSL_2(13)) = \PGL_2(13) = \PSL_2(13){:}2$. 
Given that our six groups of this form are all conjugate in $\MM$, it suffices to consider one of them. 
One such group is generated by the elements $g_{13}$, $g_6$, and $i_2$ given in Listing~\ref{fig:std_PGL213final}, where $g_{13}$ is the same as in Listing~\ref{fig:std_PGL}, $g_6 = y_6x_6$ with $y_6$ and $x_6$ as in Listing~\ref{fig:std_PGL}, and $i_2$ is one of the involutions $j_2$ inverting $g_6$.  
The following result shows that we can extend this group $S$ as claimed by adjoining a certain element of order $12$ found in $N_\MM(\langle g_{13} \rangle)$. 
This completes the proof of Theorem~\ref{thmL213}.
 
\begin{proposition}\label{thm_PGL}
The subgroup of $\MM$ generated by the elements $g_{13}$, $g_6$, $i_2$, and $a_{12}$ defined in \textcode{mmgroup} format in Listing~\ref{fig:std_PGL213final} is a maximal subgroup of $\MM$ isomorphic to $\PSL_2(13){:}2$. 
It has trivial centraliser in~$\MM$, and non-trivial intersection with precisely the $\MM$-classes $2\mt{B}$, $3\mt{B}$, $4\mt{C}$, $6\mt{E}$, $7\mt{B}$, $12\mt{H}$, $13\mt{A}$, and $14\mt{C}$. 
\end{proposition}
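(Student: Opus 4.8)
The plan is to verify the three assertions of Proposition~\ref{thm_PGL} in turn. Write $\widehat{S} = \langle g_{13},g_6,i_2,a_{12}\rangle$ and $S = \langle g_{13},g_6,i_2\rangle$. By the discussion preceding the proposition, $S$ is one of the six $\MM$-conjugate copies of $\PSL_2(13)$ with trivial centraliser, so $C_\MM(S)=1$. Since $a_{12}$ was chosen inside $N_\MM(\langle g_{13}\rangle)$, I would first enumerate $\widehat{S}$ using the orbit method of Section~\ref{ss:algo} and check that $|\widehat{S}| = 2184 = 2\,|S|$; this is feasible because the order is small. It then follows that $S \trianglelefteq \widehat{S}$ with index $2$, so $C_{\widehat{S}}(S) \leq C_\MM(S) = 1$, and conjugation embeds $\widehat{S}$ into $\Aut(S) = \PGL_2(13)$; comparing orders gives $\widehat{S} \cong \PGL_2(13) = \PSL_2(13){:}2$ and $C_\MM(\widehat{S}) \leq C_\MM(S) = 1$.

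Next I would determine the $\MM$-classes met by $\widehat{S}$. The element orders occurring in $\PGL_2(13)$ are $1,2,3,4,6,7,12,13,14$, and since the claimed list contains exactly one $\MM$-class of each nontrivial order, it suffices to compute, for a representative of each $\widehat{S}$-class of elements of each such order, its $\MM$-class. For orders $2$, $7$, $13$ I would use that $S$ is ``as-yet-unclassified'' --- it has trivial centraliser, so is not one of the two known classes of $\PSL_2(13)<\MM$ --- whence, by Section~\ref{sec_known_subgroups}, its elements of these orders lie in $2\mt{B}$, $7\mt{B}$, $13\mt{A}$; any involution of $\widehat{S}$ lying outside $S$ I would instead classify directly with \textcode{conjugate\_involution()}. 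For each even order $m\in\{4,6,12,14\}$ and each representative $x$ of such an element, I would pass to the involution $x^{m/2}\in 2\mt{B}$, use \textcode{conjugate\_involution()} to find $h$ with $(x^{m/2})^h=z$, note that $x^h\in\GG=C_\MM(z)$ since $x$ commutes with $x^{m/2}$, and then read off the $\MM$-class of $x$ from \textcode{chi\_G\_x0()} together with the known class-fusion data and the characters $\chi_\MM$ and $\chi_{299}$ of Listing~\ref{fig:mmgroup1}; order-$3$ elements, being powers of the order-$6$ element $g_6$, are classified along with $g_6$. This should return exactly $2\mt{B}$, $3\mt{B}$, $4\mt{C}$, $6\mt{E}$, $7\mt{B}$, $12\mt{H}$, $13\mt{A}$, $14\mt{C}$, each of which is met because $\widehat{S}$ contains elements of the corresponding order.

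Finally I would prove maximality. Suppose for contradiction that $\widehat{S}<K<\MM$ with $K$ maximal. By Remark~\ref{rem:assumptions}, $K$ is either a known maximal subgroup of $\MM$ or almost simple with socle $S_0$ isomorphic to $\PSU_3(4)$ or $\PSL_2(q)$ for some $q\in\{8,13,16\}$. If $S_0\cong\PSL_2(13)$ then $K\leq\Aut(\PSL_2(13))=\PGL_2(13)\cong\widehat{S}$, contradicting $\widehat{S}<K$. If $S_0$ is one of $\PSU_3(4)$, $\PSL_2(8)$, $\PSL_2(16)$, then $\PSL_2(13)$, being a subgroup of $\widehat{S}$ and hence of $K\leq\Aut(S_0)$, has order dividing $|\Aut(S_0)|$; but $|\PSL_2(13)|=1092$ is divisible by $91=7\cdot 13$, whereas none of $|\Aut(\PSU_3(4))|$, $|\Aut(\PSL_2(8))|$, $|\Aut(\PSL_2(16))|$ is, a contradiction. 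Hence $K$ is a known maximal subgroup, its order is divisible by $|\widehat{S}|=2184=2^3\cdot 3\cdot 7\cdot 13$, and it contains the $13\mt{A}$-element $g_{13}$ with $N_{\widehat{S}}(\langle g_{13}\rangle)\cong 13{:}12$. Only a few of the groups in Table~\ref{tab:allmax} have order divisible by $2184$, and for each I would rule out that it contains an $\MM$-conjugate of $\widehat{S}$, combining the known subgroup structure and class fusion of these (large) maximal subgroups with targeted computations in \textcode{mmgroup} or {\sc Magma}, using the explicit generators for $N_\MM(\langle g_{13}\rangle)$ from Proposition~\ref{prop_13a} and the restrictive class information just obtained. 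I expect this last case-check to be the principal obstacle, since it requires descending into the internal structure of several of the largest maximal subgroups of $\MM$.
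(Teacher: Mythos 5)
Your overall strategy matches the paper's: construct the group from the given generators, identify it as $\PSL_2(13){:}2$, deduce the trivial centraliser from $C_\MM(S)=1$ for the socle $S=\langle g_{13},g_6,i_2\rangle$, determine the class fusion largely by conjugating even-order elements into $\GG$ and reading off \textcode{chi\_G\_x0()}, and then prove maximality by eliminating containment in every known maximal subgroup after disposing of the hypothetical almost simple ones by order considerations. Two of your sub-steps differ from the paper in a harmless way and are worth noting. For the identification, you enumerate $\widehat{S}$ to get order $2184$ and then embed $\widehat{S}$ into $\Aut(S)=\PGL_2(13)$ via conjugation (kernel $C_{\widehat{S}}(S)\leq C_\MM(S)=1$); the paper instead checks a two-generator presentation for $\PSL_2(13){:}2$ and applies Von Dyck's Theorem. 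Both work; your route leans on the enumeration machinery of Section~\ref{ss:algo}, the paper's avoids enumeration altogether. For the fusion of order-$7$ (and order-$2$, $13$) elements you invoke the Norton--Wilson restriction for as-yet-unclassified $\PSL_2(13)<\MM$, which applies since $C_\MM(S)=1$ excludes Norton's two known classes; the paper instead verifies $7\mt{B}$ directly by computing $\chi_\MM$ on a conjugate of $g_{14}^2$ in $\GG$ and only notes consistency with the restriction, which keeps the verification more self-contained.

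The one genuine shortfall is the final case analysis for maximality, which you explicitly defer to ``targeted computations'': this is where most of the content of the paper's proof lies, and your stated filters (order divisible by $2184$, presence of $13\mt{A}$, class fusion of $\widehat{S}$) do not by themselves finish the job. After the easy eliminations (order not divisible by $13$, or non-trivial centre versus $C_\MM(\widehat{S})=1$), six candidates survive. Four of them ($2^{2}\udot{}^2\mt{E}_6(2){:}\mt{S}_3$, $3^{1+12}\udot 2\udot\mt{Suz}{:}2$, $\mt{S}_3\times\mt{Th}$, $13^2{:}2\mt{PSL}_2(13).4$) are killed by class-fusion data stored in {\sf GAP} (they miss $7\mt{B}$, $13\mt{A}$, $7\mt{B}$, $3\mt{B}$ respectively), consistent with your plan. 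But for $(3^2{:}2\times\mt{P}\Omega^+_8(3))\udot\mt{S}_4$ no stored fusion is available, and the paper needs an ad hoc argument: its unique class of order-$7$ elements has centraliser of order $2^6{\cdot}3^3{\cdot}7$, whose $3$-part exceeds that of a $7\mt{B}$-centraliser in $\MM$, so its order-$7$ elements are $7\mt{A}$. And $3^8\udot\mt{P}\Omega_8^-(3).2$ cannot be eliminated by fusion at all: it does contain $\PSL_2(13)$'s, and the paper resolves it by computing in the degree-$805896$ permutation representation that every such $\PSL_2(13)$ is centralised by an element of order $3$, contradicting $C_\MM(\widehat{S})=1$. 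So the trivial-centraliser invariant, which you list but do not deploy in this step, is essential there; without these two specific arguments your maximality proof is incomplete.
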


\begin{proof}
Let $G = \langle g_{13},g_6,i_2,a_{12} \rangle$. 
An \textcode{mmgroup} calculation shows that $u = g_{13}^6$ and $v=i_2$ satisfy the presentation $\langle u,v\mid u^{13}=v^2=(uv)^3=(u^2vu^7v)^3=1\rangle$ for $\PSL_2(13)$ given in \cite[Theorem~A]{BM}. 
Because $u$ and $v$ are non-trivial,  $\langle u,v \rangle \cong \mathrm{PSL}_2(13)$ by Von Dyck's Theorem \cite[Theorem~2.53]{handbook}. 
A further \textcode{mmgroup} calculation shows that $g_6, a_{12}^2 \in \langle u,v \rangle$, and that $a_{12}$ normalises $\langle u, v \rangle$. 
We know from the preceding discussion that $T = \langle g_{13},g_6\rangle < G$ has the form $13{:}6$ and that its centraliser in $\MM$ is equal to $\langle x_6 \rangle$, where $x_6$ is defined in Listing~\ref{fig:std_PGL}. 
An \textcode{mmgroup} calculation confirms that $i_2$ is one of the involutions $j_2$ discussed in the corresponding case above; that is, $i_2$ belongs to the $\MM$-class $2B$, inverts $g_6$, extends $T$ to a group $S$ isomorphic to $\PSL_2(13)$, and commutes with only the identity element of $\langle x_6 \rangle = C_\MM(T)$, so that $C_\MM(S)=1$. 
Because there are no elements of order 12 in $\mathrm{PSL}_2(13)$, it follows that $G \cong \PSL_2(13){:}2$.

Next, we justify the claims about conjugacy class fusion. 
The calculations described here are given in detail in our supporting code \cite{ourfile}. 
For the sake of exposition, we label $G$-classes by lowercase letters and $\MM$-classes by uppercase letters. 
The group $G$ has exactly $15$ conjugacy classes, labelled $1\mt{a}$, $2\mt{a}$--$\mt{b}$, $3\mt{a}$, $4\mt{a}$, $6\mt{a}$, $7\mt{a}$--$\mt{c}$, $12\mt{a}$--$\mt{b}$, $13\mt{a}$, and $14\mt{a}$--$\mt{c}$ in its character table in {\sf GAP}, constructed as {\sf CharacterTable("L2(13).2")}. 
We know from Propositions~\ref{prop_13a} and \ref{lem_13:6} that $6\mt{a}$ and $13\mt{a}$ fuse to $6\mt{E}$ and $13\mt{A}$, and the power maps in the character table of $\MM$ show that $6\mt{E}$-elements power to $3\mt{B}$-elements. 
The involutions in the socle of $G$ lie in $2\mt{b}$; one of them is $i_2$, so $2\mt{b}$ fuses to $2\mt{B}$. 
All elements of order $7$ in $G$ lie in a single $\MM$-class because the $\MM$-classes of elements of order $7$ are rational; cf. the argument at the beginning of Section~\ref{sec_PSLPGL}. 
The element $g_{14} = a_{12}i_2g_{13}^2$ has order $14$ and commutes with $g_{14}^7 \in 2\mt{B}$. 
We can therefore conjugate $g_7 = g_{14}^2$ into $\GG$ and check that $\chi_\MM(g_7)=1$. 
This shows that $7\mt{a}$--$\mt{c}$ fuse to $7\mt{B}$. 
(Note that this is consistent with the fusion restriction for order-$7$ elements mentioned in Section~\ref{sec_known_subgroups}.) 
The only elements of order $14$ in $\MM$ that power to $7\mt{B}$-elements are those in class $14\mt{C}$, so $14\mt{a}$--$\mt{c}$ all fuse to $14\mt{C}$. 
On the other hand, elements in $14\mt{a}$--$\mt{c}$ power to $2\mt{a}$, and $14\mt{C}$-elements power to $2\mt{B}$, so $2\mt{a}$ also fuses to $2\mt{B}$. 
All elements of order $12$ in $G$ lie in a single $\MM$-class because $G$ has a unique class of cyclic subgroups of order $12$ and all $\MM$-classes of elements of order $12$ are rational. 
It therefore suffices to show that $a_{12} \in 12\mt{H}$. 
Given that $a_{12}^6 \in 2\mt{B}$, this can be done by conjugating $a_{12}$ into $\GG$ and checking that its $\chi_\MM$-value is $13$. 
Finally, $12\mt{H}$-elements power to $4\mt{C}$-elements. 

It remains to establish that $G$ is maximal in $\MM$. 
We do this by showing that $G$ is not contained in any other maximal subgroup of $\MM$. 
Per Section~\ref{sec_intro}, the maximal subgroups of $\MM$ are classified subject to the condition that any further maximal subgroup is almost simple with socle $\PSU_3(4)$ or $\PSL_2(q)$, $q \in \{8,13,16\}$. 
The order of $G$ does not divide the order of the automorphism group of $\PSU_3(4)$, $\PSL_2(8)$, or $\PSL_2(16)$, so we just need to show that $G$ is not contained in any of the known maximal subgroups of $\MM$, which are listed in Table~\ref{tab:allmax} (bearing in mind Remark~\ref{remPSL259}b). 
Most known maximal subgroups $L$ of $\MM$ cannot contain $G$ either because $|G|$ does not divide $|L|$ (which is usually coprime to $13$), or because $L$ has non-trivial centre while $C_\MM(G)=1$. 
The remaining known maximal subgroups are $2^{2} \udot ^{2}\mt{E}_6(2){:}\mt{S}_3$, $3^{1+12}\udot 2\udot \mt{Suz}{:}2$, $\mt{S}_3\times\mt{Th}$, $13^2{:}2\mt{PSL}_2(13).4$, $3^8\udot\mt{P}\Omega_8^-(3).2$, and $(3^2{:}2\times\mt{P}\Omega^+_8(3))\udot\mt{S}_4$. 
(Note that, in the Atlas, the fourth of these groups is written as $13^2{:}4\mt{PSL}_2(13)\udot 2$, and the notation ``$\mt{O}$'' is used instead of ``$\mt{P}\Omega$''.)
In the first four cases, it is known how $L$-classes fuse in $\MM$, and one can look up this information in {\sf GAP} as demonstrated in Listing~\ref{figgap1}. 
One finds that $L$ does not intersect the $\MM$-class $7\mt{B}$, $13\mt{A}$, $7\mt{B}$, and $3\mt{B}$, in the four respective cases. 
The class fusion from $G$ to $\MM$ calculated above therefore shows that $G$ is not contained in any conjugate of $L$ in these cases. 
Now consider $L = (3^2{:}2\times\mt{P}\Omega^+_8(3))\udot\mt{S}_4$. 
We claim that $L$ does not intersect the $\MM$-class $7\mt{B}$ (whereas $G$ does). 
The group $L$ can be constructed in {\sf GAP} as {\sf AtlasGroup("(3\^{}2:2xO8+(3)).S4")}. 
It has a single class of elements of order $7$, with centraliser of order $2^6{\cdot}3^3{\cdot}7$. 
The order of the centraliser of a $7\mt{B}$-element in $\MM$ has $3$-part only $3^2$, so the order-$7$ elements in $L$ fuse to $\MM$-class $7\mt{A}$, as claimed. 
Finally, consider $L = 3^8\udot\mt{P}\Omega_8^-(3).2$. 
The online Atlas~\cite{atlas-web} provides generators for a permutation representation of $L$ of degree $805896$. 
Upon constructing this permutation representation in {\sc Magma}, one finds that $L$ contains a unique class of subgroups isomorphic to $\PSL_2(13)$. 
However, each such subgroup is centralised by an element of order $3$ in $L$, so it is not conjugate in $\MM$ to the group $G$, which has trivial centraliser. 
\end{proof}  

\begin{remark} \label{rem:N13}
We explain how we found the $13\mt{A}$-element $g_{13}$ and the index-$2$ subgroup of $N_\MM(\langle g_{13} \rangle)$ given in Proposition~\ref{prop_13a}. 
Note that these details have no bearing on the correctness of the proof of Theorem~\ref{thmL213}; they are provided in case they are of interest. 
Recall the discussion about computing in \textcode{mmgroup} from Sections~\ref{sec2.3}--\ref{ss:algo}. 
It is easy to find a $13\mt{A}$-element by random selection in $\MM$, e.g. every element of order $104$ powers to $13\mt{A}$. 
Constructing the normaliser is far more difficult. 
If we could find a $13\mt{A}$-element in $\GG$, then we could construct at least part of its normaliser by passing to $\mt{Co}_1$ in {\sc Magma}. 
However, per Listing~\ref{figgap1}, all elements of order $13$ in $\GG$ are of class $13\mt{B}$. 
The normalisers in $\MM$ of $13\mt{A}$- and $13\mt{B}$-elements are $N_\MM(13\mt{A}) = ((13{:}6)\times \PSL_3(3)).2$ and $N_\MM(13\mt{B}) = 13^{1+2}{:}(3\times 4\udot \mt{S}_4)$. 
We first constructed $N_\MM(13\mt{B})$, the idea being to find a $13\mt{A}$-element in $N_\MM(13\mt{B})$ and build its normaliser beginning in $N_\MM(13\mt{B})$. 
Wilson \cite[Sections~4--6]{W14} describes how to construct $N_\MM(13\mt{B})$ from standard generators for $\GG$. 
We had found such generators, so we were able to produce a copy $U$ of $N_\MM(13\mt{B})$. 
This group is called {\sf "13\^{}(1+2):(3x4S4)"} in the {\sf GAP} character table library, and known class fusion data shows that only one of its classes of order-$13$ elements fuses to $13\mt{A}$ in $\MM$.  
We found $g_{13}$ by random search in $U$, and confirmed that it was of the desired class by calculating part of its centraliser in $U$. 
We then constructed the index-$2$ subgroup $K \cong 13^2{:}(6 \times 4)$ of $N_U(\langle g_{13} \rangle) \cong 13^2{:}(12\times 4)$ by random search in $U$, with the aim of extending $K$ to the index-$2$ subgroup $N \cong (13{:}6)\times \PSL_3(3)$ of $N_\MM(\langle g_{13} \rangle)$. 
The group $N$ has three classes of involutions, one of which has centraliser $6 \times \PSL_3(3)$. 
If $h_2 \in N$ is such an involution, then $N = \langle g_{13},C_N(h_2) \rangle$. 
Per the discussion in the proof of Proposition~\ref{lem_13:6}, we can obtain $h_2$ as the cube of some $h_6 \in 6\mt{B}$. 
We found the latter by random search in $K$. 
A $6\mt{B}$-element powers to a $2\mt{B}$-involution, so we could confirm that $h_6 \in 6\mt{B}$ as described in the proof of Proposition~\ref{lem_13:6}, and conjugate $C_K(h_2) < K$ into $\GG$. 
This enabled us to pass to $\mt{Co}_1$ in {\sc Magma} to obtain further elements commuting with $h_2$ by following the general procedure described in Section~\ref{secQ}. 
Eventually, we were able to construct the whole of $C_N(h_2)$ and thereby obtain our generators for $N$. 
\end{remark}

\begin{remark}\label{rem_no13B} 
Wilson proved that $\MM$ has no subgroups isomorphic to $\PSL_2(13)$ containing $13\mt{B}$-elements by showing that the order-$6$ elements of every $13\mt{B}{:}6<\MM$ lie in $6\mt{F}$, and that there is no involution that extends such a subgroup to a group isomorphic to $\PSL_2(13)$. 
We reproduced his proof using the methodology described in this section, and can confirm that we reached the same conclusion. 
\end{remark}

\section{Intermezzo --- Subgroups of $\MM$ isomorphic to $\mt{A}_5$}\label{secA5}

\noindent Both of the groups $\PSL_2(16)$ and $\PSU_3(4)$ contain subgroups isomorphic to $\mt{A}_5$. 
The former contains $\mt{A}_5$ as a maximal subgroup, and the latter has a maximal subgroup $5 \times \mt{A}_5$. 
We can therefore attempt to generate subgroups $\PSL_2(16)$ or $\PSU_3(4)$ of $\MM$ by starting with (appropriate) subgroups  $\mt{A}_5$, which have been classified by Norton~\cite[Section~4]{N98}. 
We collect some information about those conjugacy classes of $\mt{A}_5 < \MM$ that could, in principle, lead to `new' subgroups $\PSL_2(16)$ or $\PSU_3(4)$. 
Recall that $\mt{A}_5$ has unique conjugacy classes of elements of orders $2$ and $3$, and two classes of elements of order~$5$. 
An argument similar to the one given in the first paragraph of Section~\ref{sec_PSLPGL} shows that all elements of order $5$ in a subgroup $\mt{A}_5$ of $\MM$ must belong to a single $\MM$-class of elements of order $5$. 

Per Section~\ref{sec_known_subgroups}, in every as-yet-unclassified subgroup of $\MM$ isomorphic to $\PSL_2(16)$ or $\PSU_3(4)$, the elements of order $5$ lie in the $\MM$-class $5\mt{B}$. 
By \cite[Table 3]{N98}, there are eight conjugacy classes of $\mt{A}_5 < \MM$, but only three contain $5\mt{B}$-elements. 
Table~\ref{tab_A5} shows the $\MM$-classes containing the elements of orders $2$, $3$, and $5$ in each such $\mt{A}_5$. 
The two classes of $\mt{A}_5$ that intersect $2\mt{B}$, $3\mt{B}$, and $5\mt{B}$ can be distinguished by their centralisers as indicated in Table~\ref{tab_A5}. 
Note that the centraliser of each such $\mt{A}_5$ contains only $2\mt{A}$-involutions, so it is not possible to find such an $\mt{A}_5$ in any conjugate of the maximal subgroup $\GG$ of $\MM$. 
Per Norton~\cite{N98} and Holmes and Wilson~\cite{HW08}, these two $\mt{A}_5 < \MM$ are said to be of {\em type} $\mt{T}$ and $\mt{B}$, respectively. 
The third $\mt{A}_5 < \MM$ in Table~\ref{tab_A5} intersects $2\mt{B}$, $3\mt{C}$, and $5\mt{B}$, so Holmes and Wilson~\cite{HW08} say that it has type $\mt{BCB}$; we shall say that it has type $\mt{G}$ because we find a copy in $\GG$. 

\renewcommand{\arraystretch}{1.12}
\begin{table}[!t]
\begin{tabular}{l|cccccc}
\hline
Type & $A_\mt{G}< \GG$ &\;& $A_\mt{T}< \mt{Th} < (\mt{S}_3 \times \mt{Th}) \cap 2\udot\BB$ &\;& $A_\mt{B}< 2\udot\BB$\\
\hline
Class fusion in $\MM$ & $(2\mt{B},3\mt{C},5\mt{B})$ && $(2\mt{B},3\mt{B},5\mt{B})$ && $(2\mt{B},3\mt{B},5\mt{B})$ \\
$C_\MM(\mt{A}_5)$ & $\mt{D}_{10}$ && $\mt{S}_3$ && $2$ \\ 
$C_\MM(C_\MM(\mt{A}_5))$ & $5^3 \udot (4 \times \mt{A}_5)$ && $\mt{Th}$ && $2 \udot \BB$ \\ \hline
\end{tabular}\\
\caption{The conjugacy classes of $\mt{A}_5 < \MM$ containing $5\mt{B}$-elements; see \cite[Table~3]{N98}. 
}\label{tab_A5}
\end{table}

Let us generically denote by $A_\mt{G}$, $A_\mt{T}$, or $A_\mt{B}$, respectively, a subgroup $\mt{A}_5$ of $\MM$ of type $\mt{G}$, $\mt{T}$, or $\mt{B}$. 
We reiterate that each such $\mt{A}_5$ is unique up to conjugacy. 
We were able to find a subgroup $A_\mt{G}$ in $\GG$ via random search; generators for such a subgroup are given in Proposition~\ref{prop_A5}. 

It was also relatively straightforward to find a subgroup $A_\mt{T}$ of $\MM$. 
The construction is summarised in Proposition~\ref{prop_A5}, but we first describe the basic strategy, which resembles that of Remark~\ref{rem:N13}. 
We first used Bray's method \cite{bray} to find various elements in the centraliser $2\udot \BB < \MM$ of a certain $2\mt{A}$-involution. 
We were able to deduce that, amongst these elements, we had a pair of generators $a$ and $b$ for $2\udot \BB$ that project to standard generators $c$ and $d$ for $\BB$ itself, in the sense defined by Wilson~\cite{W96} and the online Atlas~\cite{atlas}; namely, generators $c,d\in\BB$ such that $c$ belongs to $\BB$-class $2\mt{C}$, $d$ belongs to $\BB$-class $3\mt{A}$, $cd$ has order $55$, and $(cd)^4(dc)^2d^2cd^2$ has order $23$. 
The online Atlas provides SLPs for constructing various subgroups of $\BB$ from standard generators. 
We were thereby able to construct all subgroups in the chain $\mt{S}_5 < \mt{Th} < 2\udot \BB$, and then we finally found a copy of $A_\mt{T}$ in the $\mt{S}_5$. 
Proposition~\ref{prop_A5} shows that our copy of $A_\mt{T}$ does indeed have type $\mt{T}$. 
(Our generators for $2\udot \BB$ and the maximal subgroup $\mt{S}_3 \times \mt{Th}$ of $\MM$, i.e. the normaliser of a $3\mt{C}$-element, are provided in our GitHub repository \cite{ourfile}.) 

Constructing a copy of $A_\mt{B}$ was significantly more difficult. 
We first constructed a subgroup $\mt{A}_5\times \mt{A}_{12}$ of $\MM$, which has index $2$ in a maximal subgroup $(\mt{A}_5\times \mt{A}_{12}){:}2$. 
As explained by Norton~\cite[Section~4]{N98}, the group $\mt{A}_5\times \mt{A}_{12}$ contains subgroups $\mt{A}_5$ of both types $\mt{T}$ and $\mt{B}$ as diagonal subgroups, and it is possible to distinguish between the two types by considering orbit lengths in the natural $12$-point permutation representation of the $\mt{A}_{12}$. 
This allowed us to find a copy of $A_\mt{B}$ in $\mt{A}_5\times \mt{A}_{12}$. 
Proposition~\ref{prop_A12} provides generators for our copy of $\mt{A}_5\times \mt{A}_{12} < \MM$. 
Proposition~\ref{prop_A5} shows that our $A_\mt{B} < \mt{A}_5\times \mt{A}_{12}$ does indeed have type $\mt{B}$. 
Remark~\ref{remA12} explains how we constructed the $\mt{A}_5\times \mt{A}_{12}$ in the first place. {We note that \cite[Lemma 6.1]{pispop} provides an alternative proof that this subgroup does  indeed have type $\mt{B}$.}

\begin{proposition}\label{prop_A12}
The elements $x_3$ and $x_{10}$ defined in \textcode{mmgroup} format in Listing~\ref{fig:A5A12} generate a subgroup of $\MM$ isomorphic to $\mt{A}_{12}$. 
The elements $a_2$ and $a_3$ defined in Listing~\ref{fig:A5A12} generate a subgroup of $\MM$ isomorphic to $\mt{A}_5$ that commutes with $\langle x_3, x_{10} \rangle$. 
In particular, $\langle x_3,x_{10},a_2,a_3 \rangle \cong \mt{A}_5 \times \mt{A}_{12} < \MM$.
\end{proposition}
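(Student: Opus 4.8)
The plan is to verify every assertion of Proposition~\ref{prop_A12} by direct computation in \textcode{mmgroup}, using the pattern of argument already employed in Section~\ref{sec_PSLPGL}: pin down an abstract presentation satisfied by the given elements, apply Von Dyck's Theorem \cite[Theorem~2.53]{handbook} to obtain an epimorphism onto the subgroup they generate, and then exploit simplicity of the target to upgrade this to an isomorphism.

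First I would deal with $\langle x_3,x_{10}\rangle$. An \textcode{mmgroup} calculation confirms that $x_3$ and $x_{10}$ have orders $3$ and $10$, and, crucially, that they satisfy the relations of a fixed two-generator presentation of $\mt{A}_{12}$ --- for instance the one obtained from the Atlas standard generators \cite{atlas-web} after a suitable change of generators, having first checked that an appropriate pair of words in $x_3$ and $x_{10}$ has the element orders characterising those standard generators. Since $\mt{A}_{12}$ is simple, Von Dyck's Theorem then forces the induced map $\mt{A}_{12}\to\langle x_3,x_{10}\rangle$ to be either an isomorphism or to have trivial image; the latter is impossible because $x_3\neq 1$. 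Hence $\langle x_3,x_{10}\rangle\cong\mt{A}_{12}$. Note that the enumeration procedure of Section~\ref{ss:algo} is useless here, since $|\mt{A}_{12}|=239\,500\,800$, so the presentation argument is unavoidable.

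Next I would deal with $\langle a_2,a_3\rangle$. Because $|\mt{A}_5|=60$ is small, this subgroup can simply be enumerated as in Section~\ref{ss:algo}, after checking $|a_2|=2$ and $|a_3|=3$; alternatively one checks that $a_2$, $a_3$, and $a_2a_3$ have orders $2$, $3$, $5$ and argues exactly as above (Von Dyck plus simplicity of $\mt{A}_5$) that $\langle a_2,a_3\rangle\cong\mt{A}_5$. It then remains to show that the two subgroups commute, for which it suffices to verify in \textcode{mmgroup} that the four commutators $[x_3,a_2]$, $[x_3,a_3]$, $[x_{10},a_2]$, $[x_{10},a_3]$ are trivial: the generators of each subgroup then centralise the other, so the subgroups centralise one another elementwise. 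Finally, $\langle x_3,x_{10}\rangle\cap\langle a_2,a_3\rangle$ is centralised by both factors, hence lies in $Z(\mt{A}_{12})\cap Z(\mt{A}_5)=1$, so $\langle x_3,x_{10},a_2,a_3\rangle$ is the internal direct product $\langle x_3,x_{10}\rangle\times\langle a_2,a_3\rangle\cong\mt{A}_5\times\mt{A}_{12}$.

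I expect the main obstacle to be the identification $\langle x_3,x_{10}\rangle\cong\mt{A}_{12}$. Unlike every other group constructed in the paper by brute-force enumeration, $\mt{A}_{12}$ is far too large for that, so one is forced to locate a concrete two-generator presentation of $\mt{A}_{12}$ --- most conveniently one tied to its Atlas standard generators --- and then confirm by machine that the images in $\MM$ of the relevant elements satisfy every defining relation. Everything else --- order computations, the four commutator checks, and the elementary direct-product criterion above --- is routine. (How the elements $x_3$, $x_{10}$, $a_2$, $a_3$ were found in the first place is addressed separately in Remark~\ref{remA12}.)
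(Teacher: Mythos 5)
Your proposal is correct and follows essentially the same route as the paper: verify by machine that $x_3,x_{10}$ (and $a_2,a_3$) satisfy an explicit two-generator presentation of $\mt{A}_{12}$ (resp.\ $\mt{A}_5$), apply Von Dyck's Theorem together with simplicity, check the four commutators, and conclude the internal direct product from triviality of the centres. The only difference is cosmetic: the paper uses a Coxeter--Moser presentation of $\mt{A}_{12}$ directly on generators of orders $3$ and $10$, rather than passing through Atlas standard generators.
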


\begin{proof}
A direct calculation in \textcode{mmgroup} shows that $x_3$ and $x_{10}$ satisfy the presentation
\begin{align*}
\langle x_3, x_{10} \mid 
&x_3^3 = x_{10}^{10} = (x_3x_{10})^{11} = [x_3,x_{10}]^2 = \\
&(x_3x_{10}^{-2}x_3x_{10}^2)^2 = [x_3,x_{10}^3]^2 = (x_3x_{10}^{-4}x_3x_{10}^4)^2 = [x_3,x_{10}^5]^2 = 1 \rangle
\end{align*}
for $\mt{A}_{12}$; see \cite[p.~67]{CoxeterMoser}. 
Similarly, $a_2$ and $a_3$ satisfy the presentation $\langle a_2, a_3 \mid a_2^2=a_3^3=(a_2a_3)^5=1 \rangle$ for $\mt{A}_5$, and $a_2$ and $a_3$ commute with $x_3$ and $x_{10}$. 
Given that both $\mt{A}_{12}$ and $\mt{A}_5$ are simple groups, the result follows from Von Dyck's Theorem \cite[Theorem~2.53]{handbook}. 
\end{proof}

The following result summarises our constructions of the $\mt{G}$-, $\mt{T}$-, and $\mt{B}$-type subgroups $\mt{A}_5$ of $\MM$.

\begin{proposition}\label{prop_A5}
The subgroups of $\MM$ generated by the elements $g_2$ and $g_3$ given in Listing~\ref{fig:A5} are all isomorphic to $\mt{A}_5$ and have types $\mt{G}$, $\mt{T}$, and $\mt{B}$, according to the ``type'' indicated in the listing. 
\end{proposition}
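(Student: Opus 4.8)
The plan is to split the proposition into its two assertions --- that each $\langle g_2,g_3\rangle$ is isomorphic to $\mt{A}_5$, and that the three copies have the claimed types --- and to dispatch the first by a presentation argument and the second by class-fusion and centraliser computations. For the first assertion, a direct \textcode{mmgroup} calculation verifies in each of the three cases that $|g_2|=2$, $|g_3|=3$, and $|g_2g_3|=5$, so that $g_2$ and $g_3$ satisfy the standard $(2,3,5)$-presentation $\langle x,y\mid x^2=y^3=(xy)^5=1\rangle$ of $\mt{A}_5$ (see \cite[p.~67]{CoxeterMoser}), exactly as in the proof of Proposition~\ref{prop_A12}. Von Dyck's Theorem \cite[Theorem~2.53]{handbook} then provides a surjection $\mt{A}_5\twoheadrightarrow\langle g_2,g_3\rangle$, and since $g_2\neq 1$ and $\mt{A}_5$ is simple, this is an isomorphism.

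For the types, I would first recall from the discussion around Table~\ref{tab_A5} that a subgroup $\mt{A}_5<\MM$ containing $5\mt{B}$-elements has exactly one of three types, separated by its triple of $\MM$-classes of elements of orders $2$, $3$, $5$ together with the isomorphism type of $C_\MM(\mt{A}_5)$: type $\mt{G}$ has triple $(2\mt{B},3\mt{C},5\mt{B})$ and centraliser $\mt{D}_{10}$, while types $\mt{T}$ and $\mt{B}$ share the triple $(2\mt{B},3\mt{B},5\mt{B})$ and are told apart by $C_\MM(\mt{A}_5)\cong\mt{S}_3$ versus $C_\MM(\mt{A}_5)\cong 2$. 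This triple is well defined because $\mt{A}_5$ has unique classes of involutions and of order-$3$ elements and all of its order-$5$ elements fall in a single $\MM$-class (the argument indicated at the start of Section~\ref{sec_PSLPGL}: cyclic Sylow $5$-subgroups, and rationality of $5\mt{A}$, $5\mt{B}$ in $\MM$). For the copy of type $\mt{G}$, the generators in Listing~\ref{fig:A5} lie in $\GG$ --- confirmed by \textcode{mmgroup}'s membership test --- so the $\GG$-classes of $g_2$, $g_3$, $g_2g_3$ can be read off with \textcode{chi\_G\_x0()} and the known $\GG$-to-$\MM$ fusion of Section~\ref{sec_MM}; this yields the triple $(2\mt{B},3\mt{C},5\mt{B})$, which already forces type $\mt{G}$, since that is the only $5\mt{B}$-containing $\mt{A}_5<\MM$ meeting $3\mt{C}$.

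For the copies of types $\mt{T}$ and $\mt{B}$ I would exploit the explicit overgroups constructed earlier in the section. The type-$\mt{T}$ copy lies inside our explicit $\mt{Th}<2\udot\BB<\MM$, so the $\mt{Th}$-to-$\MM$ class fusion available in the {\sf GAP} character table library identifies its triple as $(2\mt{B},3\mt{B},5\mt{B})$; the type-$\mt{B}$ copy lies inside the group $\mt{A}_5\times\mt{A}_{12}$ of Proposition~\ref{prop_A12}, whose diagonal $\mt{A}_5$-subgroups of types $\mt{T}$ and $\mt{B}$ are distinguished by their orbit lengths on the $12$ points permuted by the $\mt{A}_{12}$-factor (see \cite[Section~4]{N98}), and \textcode{conjugate\_involution()} independently confirms $g_2\in 2\mt{B}$ in both cases. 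It then remains to separate $\mt{T}$ from $\mt{B}$, which I would do by computing $|C_\MM(\langle g_2,g_3\rangle)|$ with the random-search and enumeration routines of Section~\ref{ss:algo} (using, where helpful, the projection-to-$\mt{Co}_1$ trick of Section~\ref{secQ} after conjugating a commuting involution to standard position): one obtains $6$ for the $\mt{T}$-copy and $2$ for the $\mt{B}$-copy, and Table~\ref{tab_A5} then fixes the types. All of these computations are recorded in our repository \cite{ourfile}.

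The main obstacle is the type identification for the $\mt{T}$- and $\mt{B}$-copies. Because the centralisers of these $\mt{A}_5$'s contain only $2\mt{A}$-involutions, they cannot be conjugated into any copy of $\GG$, so the efficient \textcode{mmgroup} functionality (\textcode{chi\_G\_x0()} and its relatives) is unavailable for their order-$3$ and order-$5$ elements, and there is no similarly cheap way to compute $C_\MM(\mt{A}_5)$ inside $\MM$ from scratch. This is exactly why the argument must be routed through the explicitly built copies of $\mt{Th}$, $2\udot\BB$, and $\mt{A}_5\times\mt{A}_{12}$, whose class fusion to $\MM$ is already known, and why the centraliser orders are obtained with our own (admittedly unoptimised) group-enumeration code rather than by a direct calculation in $\MM$.
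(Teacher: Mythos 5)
Your reduction of the isomorphism statement to the $(2,3,5)$-presentation plus simplicity, and your treatment of the type-$\mt{G}$ copy (membership in $\GG$, \textcode{chi\_G\_x0()}, known fusion), coincide with the paper's proof. The gap is in how you separate types $\mt{T}$ and $\mt{B}$. You propose to \emph{compute} $|C_\MM(\langle g_2,g_3\rangle)|$, obtaining $6$ and $2$, using the random-search and enumeration routines of Section~\ref{ss:algo}; but those routines can only certify a \emph{lower} bound: you can check that exhibited elements centralise the $\mt{A}_5$ and enumerate the group they generate, but nothing in your method shows you have found \emph{all} of $C_\MM(\langle g_2,g_3\rangle)$. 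For the $\mt{T}$-copy this is harmless: once the order-$5$ elements are known to lie in $5\mt{B}$ (the paper gets this by conjugating $g_5$ into $\GG$ via the listed $i_2$ and $h$), Table~\ref{tab_A5} limits the centraliser to $\mt{D}_{10}$, $\mt{S}_3$ or $2$, and exhibiting the order-$3$ centralising element $c_3$ of Listing~\ref{fig:A5} already forces type $\mt{T}$ --- a pure lower-bound argument, and exactly what the paper does. For the $\mt{B}$-copy, however, your conclusion rests on the \emph{non-existence} of an order-$3$ element centralising the group (otherwise it would be of type $\mt{T}$), and failure of a random search to find one proves nothing; nor is there a feasible ambient group in which the full centraliser could be provably computed (it lies in a copy of $2\udot\BB$, far beyond enumeration). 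So, as written, the type-$\mt{B}$ identification is not established; note also that you never actually verify that the order-$5$ elements of this copy lie in $5\mt{B}$, which is needed before Table~\ref{tab_A5} applies at all.

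The paper closes exactly this point by the ingredient you mention but do not deploy: it first shows $g_5\in5\mt{B}$ (again via $i_2$ and $h$), rules out type $\mt{G}$ because the listed $c_2\in2\mt{A}$ centralises the group whereas a type-$\mt{G}$ centraliser $\mt{D}_{10}$ contains only $2\mt{B}$-involutions, and then identifies the type \emph{positively} from the construction: the copy is a diagonal subgroup of $\mt{A}_5\times\overline{A}<\mt{A}_5\times\mt{A}_{12}$, where $\overline{A}\cong\mt{A}_5$ has orbits of lengths $6$ and $6$ on the $12$ points (a property preserved by every automorphism of $\mt{A}_{12}$), and \cite[Table~4, rows~5--7]{N98} says that such diagonal subgroups have type $\mt{B}$. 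If you keep your outline, the ``$|C_\MM(S)|=2$'' step must be replaced by this (or some other positive) criterion. A smaller issue: your route to the triple $(2\mt{B},3\mt{B},5\mt{B})$ for the $\mt{T}$-copy via the $\mt{Th}\to\MM$ fusion requires knowing the $\mt{Th}$-class of $g_3$ ($\mt{Th}$ has three classes of order-$3$ elements), which you do not explain how to determine; this step is in any case unnecessary, since knowing $2\mt{B}$ and $5\mt{B}$ together with Norton's classification already reduces to the three types of Table~\ref{tab_A5}.
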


\begin{proof}
In each case, an \textcode{mmgroup} calculation confirms that $g_2$ and $g_3$ satisfy the presentation for $\mt{A}_5$ given in the proof of Proposition~\ref{prop_A12}, namely, $g_2^2=g_3^3=(g_2g_3)^5=1$. 
It remains to verify that $\langle g_2,g_3 \rangle$ has type $\mt{G}$, $\mt{T}$, and $\mt{B}$ in the three respective cases. 
Note that the arguments that follow involve the auxiliary elements $c_i$, $i_2$, and $h$ defined in Listing~\ref{fig:A5}. 
Write $g_5 = g_2g_3$ in each case. 

In the first case (type $\mt{G}$), $c_2$ is the central involution $z$ in the fixed copy of the maximal subgroup $\GG \cong \Gx$ of $\MM$ in \textcode{mmgroup}; see Section~\ref{sec_MM}. 
A direct calculation shows that $c_2$ centralises $g_2$ and $g_3$, i.e. $\langle g_2,g_3 \rangle < \GG$. 
The method \textcode{conjugate\_involution()} shows that $g_2 \in 2\mt{B}$, and \textcode{chi\_G\_x0()} shows that the character values of $g_3$ and $g_5$ in the $196883$-dimensional complex representation of $\MM$ are $-1$ and $8$, so $g_3 \in 3\mt{C}$ and $g_5 \in 5\mt{B}$. 
It therefore follows from Table~\ref{tab_A5} that $\langle g_2,g_3 \rangle$ has type $\mt{G}$. 

In the second case (type $\mt{T}$), \textcode{conjugate\_involution()} confirms that $g_2 \in 2\mt{B}$. 
The element $i_2$ is a $2\mt{B}$-involution that centralises $g_5$, but not $g_2$ or $g_3$. 
The element $h$ conjugates $i_2$ to $z$, and therefore conjugates $g_5$ into $\GG$. 
The method \textcode{chi\_G\_x0()} shows that $g_5 \in 5\mt{B}$, so it follows from Table~\ref{tab_A5} that $\langle g_2,g_3 \rangle$ has type $\mt{G}$, $\mt{T}$, or $\mt{B}$. 
The elements $c_2$ and $c_3$ centralise $\langle g_2,g_3 \rangle$, and have orders $2$ and $3$, respectively. 
In particular, $\langle g_2,g_3 \rangle$ is centralised by an element of order $3$, so it must be of type $\mt{T}$. 

In the third case (type $\mt{B}$), $g_2 \in 2\mt{B}$, and $i_2$ and $h$ have the same properties as in the second case, so proceeding as in that case confirms that $g_5 \in 5\mt{B}$, whence $\langle g_2,g_3 \rangle$ has type $\mt{G}$, $\mt{T}$, or $\mt{B}$. 
The element $c_2$ centralises $\langle g_2,g_3 \rangle$ and lies in the $\MM$-class $2\mt{A}$, so $\langle g_2,g_3 \rangle$ is contained in a conjugate of $2 \udot \BB$. 
This shows that $\langle g_2,g_3 \rangle$ does not have type $\mt{G}$, because $A_\mt{G}$ is contained in a $2\mt{B}$-centraliser and $C_\MM(A_\mt{G}) \cong \mt{D}_{10}$ has a unique class of involutions. 
To show that $\langle g_2,g_3 \rangle$ has type $\mt{B}$, we explain how we constructed it as a diagonal subgroup of the copy of $\mt{A}_5 \times \mt{A}_{12}$ given in Proposition~\ref{prop_A12}. 
The proof of Proposition~\ref{prop_A12} says (in particular) that the elements $x_3$ and $x_{10}$ defined in Listing~\ref{fig:A5A12} satisfy a certain presentation for $\mt{A}_{12}$. 
The elements $y_3=(1,2,3)$ and $y_{10} = (1,3)(2,4,5,6,7,8,9,10,11,12)$ of the symmetric group on $\Omega = \{1,\ldots,12\}$  satisfy the same presentation. 
We constructed $\langle y_3,y_{10} \rangle \cong \mt{A}_{12}$ in {\sc Magma} \cite{magma} and found a subgroup $A \cong \mt{A}_5$ such that a diagonal subgroup of $\mt{A}_5 \times A < \mt{A}_5 \times \mt{A}_{12}$ has type $\mt{B}$. 
This was done by considering the orbits of $A$ on $\Omega$. 
According to \cite[Table~4, rows~5--7]{N98}, $A$ should have orbit lengths $12$; $6$ and $6$; or $6$, $5$, and~$1$. 
We found $A \cong \mt{A}_5$ with orbit lengths $6$ and $6$, and used the {\sc Magma} function {\sc InverseWordMap} to record generators for $A$ as words in $y_3$ and $y_{10}$. 
This allowed us to use our generators $x_3$ and $x_{10}$ for $\text{A}_{12}$ in \textcode{mmgroup} to construct a subgroup $\overline{A}$ of $\langle x_3,x_{10} \rangle$ that is an image of $A$ under some automorphism of $\mt{A}_{12}$. 
(For brevity, we do not include the SLPs here; see \cite{ourfile} instead.)
Because all automorphisms of $\mt{A}_{12}$ preserve cycle structure in the natural $12$-point representation, the group $\overline{A} < \langle x_3,x_{10} \rangle$ also has the required orbit-length property. 
The group $\overline{A}$ is generated by the elements $b_2$ and $b_3$ in Listing~\ref{fig:A5A12}, which satisfy the aforementioned presentation for $\mt{A}_5$, i.e. $b_2^2=b_3^3=(b_2b_3)^5=1$. 
The elements $g_2$ and $g_3$ given under ``type $\mt{B}$'' in Listing~\ref{fig:A5} generate a diagonal subgroup of $\mt{A}_5 \times \overline{A} < \mt{A}_5 \times \mt{A}_{12} < \MM$. 
\end{proof}

\begin{remark}
Note that neither of the groups $\langle a_2,a_3 \rangle \cong \mt{A}_5$ nor $\langle b_2,b_3 \rangle \cong \mt{A}_5$ given in Listing~\ref{fig:A5A12} has type $\mt{G}$, $\mt{T}$, or $\mt{B}$. 
Per \cite[Tables~3--4]{N98}, $\langle a_2,a_3 \rangle$ belongs to the unique class of $\mt{A}_5 < \MM$ intersecting $2\mt{A}$, $3\mt{A}$, and $5\mt{A}$; and $\langle b_2,b_3 \rangle$ belongs to the unique class intersecting $2\mt{B}$, $3\mt{A}$, and $5\mt{A}$. 
\end{remark}

\begin{remark}\label{remA12}
Although they are not needed for the proof of Proposition~\ref{prop_A12}, we provide some details as to how we constructed the subgroup $\mt{A}_5\times\mt{A}_{12}$ of $\MM$ given in Listing~\ref{fig:A5A12}. 
Recall that we had found generators for a copy of $2 \udot \BB$ in $\MM$ that project to standard generators for $\BB$, enabling us to use SLPs from the online Atlas to construct various subgroups of $\BB$ or $2 \udot \BB$. 
To construct $\mt{A}_5\times\mt{A}_{12}$, we considered the chain of subgroups $\mt{A}_{12} < \mt{HN}<\mt{HN}{:}2<\BB$. 
We constructed $2 \times \text{A}_{12}<2\udot \BB$, and then used a random search to find the elements $x_3$ and $x_{10}$ given in Listing~\ref{fig:A5A12}, which generate a copy $X$ of $\text{A}_{12}$. 
To find the group $\langle a_2,a_3 \rangle \cong \mt{A}_5$ given in Listing~\ref{fig:A5A12}, which centralises $X \cong \mt{A}_{12}$, we first found a $2\mt{B}$-involution in $X$ and conjugated it to the central involution in $\GG$ via some $h \in \MM$. 
We used {\sc Magma} to construct the centraliser of $\pi(X^h \cap \GG)$ in $\pi(\GG) \cong \mt{Co}_1$, where $\pi \colon \GG\to \GL_{24}(2)$ is the homomorphism described in Section~\ref{secQ}, and pulled back generators of this centraliser to $\GG$. 
We then adjusted the pulled-back generators by elements of the normal subgroup $\QQ$ of $\GG$ to obtain coset representatives centralising $X^h \cap \GG$. 
Finally, we used a random search in the group generated by these elements to find sufficiently many elements that commute with the whole of $X^h$ and generate a group isomorphic to $\mt{A}_5$. 
\end{remark}

\section{Proof of Theorem~\ref{thmL216}}\label{sec_L16}

\noindent The general strategy of our proof of Theorem~\ref{thmL213} applies also to the proofs of Theorems~\ref{thmU34}--\ref{thmL28}, but there are various key differences in each case. 
We aim to provide details on the latter, while keeping the general discussion more concise than in Section~\ref{sec_PSLPGL}. 
We first deal with Theorem~\ref{thmL216}. 

The group $P = \PSL_2(16)$ has order $2^4{\cdot}3{\cdot}5{\cdot}17$ and maximal subgroups $2^4{:}15$, $\mt{A}_5$, $\mt{D}_{34}$, and $\mt{D}_{30}$, all unique up to conjugacy. 
There are two classes of elements of order $5$ in $P$, both of which intersect a maximal $\mt{A}_5$ non-trivially. 
For each element $g_5$ of order $5$ in a fixed maximal $\mt{A}_5 < P$, there are exactly $10$ involutions $j_2\in P$ such that $\langle g_5,j_2\rangle\cong \mt{D}_{10}$ and $\langle \mt{A}_5,j_2\rangle=P$. 
As explained in Sections~\ref{sec_known_subgroups} and~\ref{secA5}, every as-yet-unclassified subgroup of $\MM$ isomorphic to $P$ must have its elements of orders $2$ and $5$ lying in the $\MM$-classes $2\mt{B}$ and $5\mt{B}$. 
In particular, a maximal $\mt{A}_5$ in such a subgroup must have type $\mt{G}$, $\mt{T}$, or $\mt{B}$. 
Given that $A_\mt{G}$, $A_\mt{T}$, and $A_\mt{B}$ are unique up to conjugacy in $\MM$, it suffices to classify the subgroups of $\MM$ isomorphic to $P$ that contain one of the {\em fixed} groups $A_\mt{G}$, $A_\mt{T}$, or $A_\mt{B}$ defined in Listing~\ref{fig:A5}.

Let $A = \langle g_2,g_3 \rangle$ be one of the groups $A_\mt{G}$, $A_\mt{T}$, or $A_\mt{B}$ in Listing~\ref{fig:A5}, and note that we also refer to some of the other elements defined there.
Recall from the proof of Proposition~\ref{prop_A5} that $g_5 = g_2g_3$ has order $5$ in each case, and that, in the second and third cases, $i_2$ is a $2\mt{B}$-involution centralising $g_5$. 
Let us define $i_2$ to be the central involution in $\GG$ when $A=A_\mt{G}$, so that we can discuss all three cases at the same time.
We need to find all involutions $j_2 \in \MM$ that invert $g_5$ by conjugation, because such involutions are precisely those yielding $\langle g_5,j_2 \rangle \cong \mt{D}_{10}$. 
All such involutions lie in the normaliser $N=N_\MM(\langle g_5\rangle)$ of $\langle g_5 \rangle$ in $\MM$, so we need to find a subgroup of $N$ that contains all of them. 
Because $g_5 \in 5\mt{B}$, it follows from \cite[Theorem 5]{W88} that $N \cong 5^{1+6}{:}2\udot\mt{J}_2{:}4$, where $\mt{J}_2$ is the second Janko group. 
Recall that $A_\mt{G} < \GG$, and that, in the other two cases, $h$ conjugates $\langle g_5,i_2 \rangle$ into $\GG$. 
Let us set $h=1$ in the case $A = A_\mt{G}$. 

The group $\GG \cong \Gx$ intersects the $\MM$-class $5\mt{B}$ in two $\GG$-classes. 
These $\GG$-classes are labelled $5\mt{A}$ and $5\mt{C}$ in the character table of $\GG$ in {\sf GAP}~\cite{GAPbc,gap}. 
They project to classes also labelled $5\mt{A}$ and $5\mt{C}$ (respectively) in the character table of the quotient $\mt{Co}_1$, and can be distinguished by the dimension of their fixed-point spaces on the $24$-dimensional module for $\mt{Co}_1$ in characteristic $2$. 
Specifically, the $5\mt{A}$-elements have a trivial fixed-point space, and the $5\mt{C}$-elements have a $4$-dimensional fixed-point space. 
It turns out that if $A=A_\mt{G}$ or $A_\mt{B}$ then $g_5^h$ belongs to the $\GG$-class $5\mt{C}$, and if $A=A_\mt{T}$ then $g_5^h$ belongs to the $\GG$-class $5\mt{A}$. 
This can be verified using the \textcode{mmgroup} method \textcode{chi\_G\_x0()}, specifically from the values of the character $\chi_{24}$ defined in Listing~\ref{fig:mmgroup1}; see \cite{ourfile}.

To find a sufficiently large subgroup of $N$, we proceed as follows. 
The element $h$ gives us a conjugate $\langle g_5^h \rangle$ of the subgroup $\langle g_5 \rangle$ of $N$ inside $\GG$. 
(Recall that $h=1$ when $A=A_\mt{G}$.) 
We use the homomorphism $\pi \colon \GG\to \GL_{24}(2)$ described in Section~\ref{secQ} to construct the normaliser of $\pi(g_5^h)$ in $\text{Co}_1$, pull back generators of this normaliser to $\GG$, and adjust the pulled-back generators by elements of $\QQ$ to obtain coset representatives normalising $g_5^h$ in $\GG$. 
Conjugating these elements by $h^{-1}$ yields some subgroup $Y_1$ of $(N^h \cap \GG)^{h^{-1}} < N$, which we seek to extend. 
We find a second $2\mt{B}$-involution $i_2'$ centralising $g_5$ by random search in $N^h \cap \GG$, conjugate $i_2'$ to the central involution in $\GG$ via some $h' \in \MM$, and repeat the process of passing to $\mt{Co}_1$ and pulling back to $\GG$ to produce a subgroup $Y_2$ of $(N^{h'} \cap \GG)^{(h')^{-1}} < N$. 
At this point, we have generated at least a subgroup $Y = \langle Y_1,Y_2 \rangle$ of $N$. 

We then conduct a random search in $Y$ for all involutions $j_2 \in \MM$ that invert $g_5$ and have the property that all involutions in $\langle g_5,j_2 \rangle \cong \mt{D}_{10}$ are of class $2\mt{B}$. 
The $(2\mt{B},2\mt{B},5\mt{B})$ class multiplication coefficient of $\MM$, which can be calculated from the character table of $\MM$ in {\sf GAP}, tells us that there are $3150000$ such involutions. 
We are able to find all of them without having to check whether $Y=N$, and we test each one to determine whether $S = \langle g_2,g_3,j_2 \rangle$ is isomorphic to $P$. 
As in the corresponding calculation in Section~\ref{sec_PSLPGL}, most of the involutions $j_2$ are quickly eliminated; see Remark~\ref{remPSL2(16)}. 
For $A = A_\mt{G}$ and $A = A_\mt{T}$, it is never the case that $S \cong P$. 
For $A = A_\mt{B}$, we find precisely $40$ involutions $j_2$ such that $S \cong P$. 
Moreover, each of these $40$ involutions commutes with the $2\mt{A}$-involution $c_2$ in the ``type $\mt{B}$'' case of Listing~\ref{fig:A5}. 
Given that $c_2$ generates the centraliser of the subgroup $A_\mt{B}$ of $S$ (see Table~\ref{tab_A5}), it follows that $C_\MM(S) = C_\MM(A_\mt{B})$. 
In particular, $S < C_\MM(C_\MM(S)) \cong 2 \udot \BB$, so $S$ is not maximal in $\MM$. 

It remains to show that no almost simple extension of one of the $40$ groups $S = \langle A_\mt{B},j_2 \rangle \cong P$ can be maximal in $\MM$, in the event that such an extension arises in $\MM$. 
Every almost simple extension $E<\MM$ of $S$ normalises $S$, and $N_\MM(S)$ normalises $C_\MM(S)$, so $S \leq E \leq N_\MM(S) \leq N_\MM(C_\MM(S))$. 
As~explained above, $C_\MM(S)$ is generated by a $2\mt{A}$-involution, so $N_\MM(C_\MM(S)) = C_\MM(C_\MM(S)) \cong 2 \udot \BB$. 
In particular, $E$ is certainly not maximal in $\MM$.

\begin{remark} \label{remPSL2(16)}
  \begin{iprf}
    \item Our files containing the involutions $j_2$ are too large to upload to our GitHub repository \cite{ourfile}, but the code given there includes generators for the `large' subgroups $Y$ of $N$, from which the $j_2$ can be recovered by random search; cf. Remark~\ref{remPSL2(13)}. 
To check whether each $S = \langle g_2,g_3,j_2 \rangle$ is isomorphic to $P$, we first checked whether $j_2g_2$ and $j_2g_3$ have orders that arise in $P$. 
This ruled out a vast majority of cases. 
In the remaining cases, we checked whether $|S|=|P|$ by enumerating $S$ as described in Section~\ref{ss:algo}. 
Every group $S$ that passed that test also satisfied $S \cong P$, which we checked by computing the multi-set of element orders in $S$ and applying the Main Theorem of \cite{PSL2qCharacterisation} (which can be verified by constructing all groups of order $4080$ in {\sf GAP} using the {\sf GrpConst} package \cite{grpconst}).
\item 
  It follows from \cite[Proposition~5.1]{W99B} that the projection into $\BB$ of every subgroup $S\cong \PSL_2(16)$ constructed above lies in a maximal subgroup of $\BB$ of shape $2^{9+16}.\mt{PSp}_8(2)$. 
  \end{iprf}
\end{remark}

\section{Proof of Theorem~\ref{thmU34}} \label{sec_U34}

\noindent The group $P = \PSU_3(4)$ has order $2^6{\cdot}3{\cdot}5^2{\cdot}13$, maximal subgroups $(2^2.2^4){:}15$, $5\times \mt{A}_5$, $5^2{:}\mt{S}_3$, and $13{:}3$, all unique up to conjugacy, and a unique class of subgroups isomorphic to $\mt{A}_5$. 
The maximal subgroup $5\times \mt{A}_5$ has $14$ classes of elements $g_5$ of order $5$. 
A direct calculation in {\sf GAP}~\cite{gap} shows that exactly four of these classes are `good' in the sense that there exist involutions $j_2\in P$ such that $\langle g_5,j_2 \rangle \cong \mt{D}_{10}$ and $\langle 5 \times \mt{A}_5, j_2\rangle = P$. 
Every good $g_5 \in 5 \times \mt{A}_5$ projects non-trivially to both direct factors and admits exactly five such $j_2$. 
Moreover, for every element $x_5$ of order $5$ in the direct factor $\mt{A}_5$, there are exactly two elements $y_5$ and $y_5'$ in the cyclic direct factor such that $x_5y_5$ and $x_5y_5'$ are good, and $y_5' = y_5^{-1}$. 
In particular, if we choose any $x_5$ and any $y_5$, then exactly one of $x_5y_5$ and $x_5y_5^2$ is good.

Per Sections~\ref{sec_known_subgroups} and~\ref{secA5}, every as-yet-unclassified subgroup of $\MM$ isomorphic to $P$ has its elements of orders $2$, $3$, and $5$ lying in the $\MM$-classes $2\mt{B}$, $3\mt{C}$, and $5\mt{B}$. 
In particular, every $\mt{A}_5$ in such a subgroup must have type $\mt{G}$, by Table~\ref{tab_A5}. 
Because $A_\mt{G}$ is unique up to conjugacy, it suffices to classify the subgroups of $\MM$ isomorphic to $P$ containing the copy of $A_\mt{G}$ defined in the ``type $\mt{G}$'' case in Listing~\ref{fig:A5}. 
Note that fixing $A_\mt{G}$ also fixes $5 \times A_\mt{G}$, because $C_\MM(A_\mt{G}) \cong \mt{D}_{10}$ has a unique cyclic subgroup of order $5$.

Define $A_\mt{G} = \langle g_2,g_3 \rangle < \GG$ per Listing~\ref{fig:A5}, and consider also the elements $c_2$, $c_5$, $h_u$, and $h_v$ defined there. 
An \textcode{mmgroup} calculation shows that $c_5$ belongs to the $\MM$-class $5\mt{B}$; cf. the proof of Proposition~\ref{prop_A5}. 
Moreover, $c_5$ is inverted by $c_2 \in 2\mt{B}$, so $\langle c_2,c_5 \rangle = C_\MM(A_\mt{G})$.
We therefore have our $5 \times \mt{A}_5$. 
If we now take the element $g_5 = g_2g_3$ of order $5$ in the direct factor $A_\mt{G} \cong \mt{A}_5$, then one of $u_5 = g_5c_5$ and $v_5 = g_5c_5^2$ will be `good' --- assuming that $\langle g_2,g_3,c_5 \rangle \cong 5 \times \mt{A}_5$ extends to $P$ --- but we do not know which one. 
We therefore need to find all involutions that invert $u_5$ and all involutions that invert $v_5$, and test all of them. 
The elements $h_u$ and $h_v$ satisfy $u_5^{h_u} = v_5^{h_v} = g_5$, so the required involutions can be obtained by conjugating the involutions inverting $g_5$ obtained in Section~\ref{sec_L16}. 

We tested each involution $j_2$ that inverts $u_5$ or $v_5$ to see whether it extends $T = \langle g_2,g_3,c_5 \rangle$ to a group isomorphic to $P$. 
This was done by checking the orders of up to $100$ random elements of the group $\langle T,j_2 \rangle$ and discarding $j_2$ if an element of order not arising in $P$ was found. 
This test eliminated all of the involutions inverting $v_5$, and all but five of the involutions inverting $u_5$. 
One of the latter involutions is the element $j_2$ given in Listing~\ref{fig:A5}; the other four lie in $\langle u_5,j_2 \rangle \cong \mt{D}_{10}$. 
An \textcode{mmgroup} calculation confirms that $j_2$ does indeed invert $u_5$. 
To establish that the group $S = \langle T, j_2 \rangle$ is isomorphic to $P$, we check a presentation for $P$. 
The elements $j_2$ and $g_3$ satisfy the following presentation for $P$, the correctness of which can be verified in {\sc GAP} using the functions {\sc IsomorphismFpGroup} and {\sc RelatorsOfFpGroup}:
\[
\langle j_2,g_3 \mid j_2^2=g_3^3=(j_2g_3^{-1}j_2g_3)^5=(j_2g_3)^{15}=((j_2g_3)^3(j_2g_3^{-1})^3)^3=(j_2g_3^{-1}(j_2g_3)^5)^4=1\rangle.
\]
Because $P$ is simple, Von Dyck's Theorem \cite[Theorem~2.53]{handbook} implies that $S \cong P$. 
Moreover, $C_\MM(S)$ is trivial: $C_\MM(T) = \langle c_5 \rangle$, and an \textcode{mmgroup} calculation shows that $c_5$ does not commute with $j_2$. 
Note also that enumerating $S$ as described in Section~\ref{ss:algo} confirms that it has order $62400 = |P|$; see \cite{ourfile}.

We reiterate that $S = \langle g_2,g_3,c_5,j_2 \rangle$ is the unique subgroup of $\MM$ isomorphic to $P \cong \PSU_3(4)$ containing $A_\mt{G} = \langle g_2,g_3 \rangle$, and that, up to conjugacy, it is the unique subgroup of $\MM$ isomorphic to $P$ containing $5\mt{B}$-elements. 
We now show that $S$ extends to a subgroup of $\MM$ isomorphic to the full automorphism group of $P$, i.e. $\PSU_3(4){:}4$. 
By uniqueness of $S$, this also implies that $S$ does {\em not} extend to a {\em maximal} subgroup of $\MM$ isomorphic to $\PSU_3(4){:}2$, as asserted in Theorem~\ref{thmU34}. 

\begin{proposition} \label{prop:U34:4}
The subgroup $U$ of $\MM$ generated by the elements $g_2$, $g_3$, $c_5$, $j_2$, and $a_{12}$ defined in \textcode{mmgroup} format under ``type $\mt{G}$'' in Listing~\ref{fig:A5} is isomorphic to $\PSU_3(4){:}4$ and has trivial centraliser in $\MM$. 
\end{proposition}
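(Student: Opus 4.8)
The centraliser statement comes for free: the discussion preceding the proposition establishes that $S=\langle g_2,g_3,c_5,j_2\rangle\cong P=\PSU_3(4)$ and that $C_\MM(S)=1$, and since $S\le U$ we immediately get $C_\MM(U)\le C_\MM(S)=1$. So the whole task is to identify the isomorphism type of $U$.

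The plan is to show first that $U$ is an epimorphic image of $\PSU_3(4){:}4$, and then to pin down which one. For the first step I would take a two-generator presentation of $\PSU_3(4){:}4$ --- either a known one, or one produced in {\sf GAP} via {\sc IsomorphismFpGroup} and {\sc RelatorsOfFpGroup}, exactly as was done for $P$ itself earlier in this section --- and exhibit an explicit pair of words $u,v$ in $g_2,g_3,c_5,j_2,a_{12}$ that generate $U$ and satisfy these relators; an \textcode{mmgroup} calculation verifies the relations, and Von Dyck's Theorem \cite[Theorem~2.53]{handbook} then supplies an epimorphism $\PSU_3(4){:}4\twoheadrightarrow U$. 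For the second step, set $G=\PSU_3(4){:}4$ and note that if $N\trianglelefteq G$ then $N\cap\PSU_3(4)$ is normal in the simple group $\PSU_3(4)$, hence trivial or all of it: in the first case $[N,\PSU_3(4)]\le N\cap\PSU_3(4)=1$, so $N\le C_G(\PSU_3(4))=1$ because $\mathrm{Aut}(\PSU_3(4))$ acts faithfully on the centreless group $\PSU_3(4)$; in the second case $\PSU_3(4)\le N\le G$, and the subgroups in this range --- all normal in $G$, since $G/\PSU_3(4)$ is cyclic of order $4$ --- are $\PSU_3(4)$, $\PSU_3(4){:}2$, and $\PSU_3(4){:}4$. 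Hence the proper quotients of $G$ are $1$, $C_2$, $C_4$, $\PSU_3(4)$, and $\PSU_3(4){:}2$. Since $U$ contains the nonabelian simple group $S$, it must be one of $\PSU_3(4)$, $\PSU_3(4){:}2$, or $\PSU_3(4){:}4$; and an \textcode{mmgroup} computation shows that $a_{12}$ has order $12$, whereas neither $\PSU_3(4)$ nor $\PSU_3(4){:}2$ has an element of order $12$ (read off from the character tables {\sf CharacterTable("U3(4)")} and {\sf CharacterTable("U3(4).2")} in {\sf GAP}). Therefore $U\cong\PSU_3(4){:}4$.

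The only genuinely non-routine ingredient above is locating a convenient presentation of $\PSU_3(4){:}4$ together with generating words $u,v$ of $U$ realising it --- everything else is a short computation or a standard fact about normal subgroups of an almost simple group. If this proves awkward, there is an alternative that avoids presentations entirely: enumerate $U$ directly by the orbit method of Section~\ref{ss:algo} to find $|U|=4\,|\PSU_3(4)|=249600$ (the copy $S$ of order $62400$ was already enumerated this way); observe that $S\trianglelefteq U$, since the core of $S$ in $U$ is trivial or $S$ and the trivial option is impossible as $|U|>24$; and use $C_U(S)\le C_\MM(S)=1$ to get a faithful embedding $U\hookrightarrow\mathrm{Aut}(S)\cong\PSU_3(4){:}4$ of groups of equal order, hence an isomorphism. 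Either way, the expected bottleneck is computational rather than conceptual.
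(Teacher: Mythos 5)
Your argument is correct, but it takes a genuinely different route from the paper, whose proof is much lighter: since $C_\MM(S)=1$ was already established, a single \textcode{mmgroup} check that $a_{12}\notin S$ and that $a_{12}$ conjugates each of $g_2$, $g_3$, $c_5$, $j_2$ back into $S$ (membership is decidable because $S$ has already been enumerated) gives $S< U\leq N_\MM(S)\leq \mathrm{Aut}(S)=\PSU_3(4){:}4$ at once, and then $|a_{12}|=12$ eliminates the only intermediate possibilities $\PSU_3(4)$ and $\PSU_3(4){:}2$, which contain no elements of order $12$. So the paper needs neither a presentation of $\PSU_3(4){:}4$ nor an enumeration of $U$. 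Your primary plan (presentation plus Von Dyck) leaves its one non-routine ingredient unresolved --- producing a two-generator presentation of $\PSU_3(4){:}4$ together with words $u,v$ that demonstrably generate $U$, where the generation claim is itself a further computation --- but your fallback is complete and sound: enumerate $U$ (order $249600$, roughly four times the enumeration of $S$ already carried out), deduce $S\trianglelefteq U$ from the core/index argument, and use $C_U(S)\leq C_\MM(S)=1$ to embed $U$ into $\mathrm{Aut}(S)$, where equality of orders finishes the job. That route buys independence from any normaliser reasoning at the price of a larger enumeration, whereas the paper's normalising check is essentially instantaneous. One small slip in your first branch: the proper quotients of $G=\PSU_3(4){:}4$ are $1$, $C_2$, and $C_4$ (the quotients by the normal subgroups $G$, $\PSU_3(4){:}2$, and $\PSU_3(4)$), not $\PSU_3(4)$ and $\PSU_3(4){:}2$ themselves, which are not quotients of $G$ at all; the slip is harmless since it only strengthens your conclusion --- any epimorphic image of $G$ containing a nonabelian simple subgroup is already all of $G$, so in that branch the order-$12$ argument is not even needed.
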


\begin{proof}
{Recall that  $S= \langle g_2,g_3,c_5,j_2 \rangle\cong P$. As shown above, $S$} has trivial centraliser in $\MM$, so $N_\MM(S)$ is isomorphic to a subgroup of $\mt{Aut}(P) = \PSU_3(4){:}4$. {An \textcode{mmgroup} calculation shows that $a_{12}$ conjugates each of $g_2$, $g_3$, $c_5$, and $j_2$ into $S$, thus $U\leq N_\MM(S)$. Since $|a_{12}|=12$ and $\PSU_3(4){:}2$ has no elements of order $12$, we  deduce $U \cong \PSU_3(4){:}4$.} 
\end{proof}

\begin{proposition} \label{prop:U34_fusions}
The group $U < \MM$ defined in Proposition~\ref{prop:U34:4} has non-trivial intersection with precisely the conjugacy classes $2\mt{B}$, $3\mt{C}$, $4\mt{C}$, $4\mt{D}$, $5\mt{B}$, $6\mt{F}$, $8\mt{E}$, $10\mt{E}$, $12\mt{J}$, $13\mt{B}$, $15\mt{D}$, $16\mt{B}$, and $16\mt{C}$ of $\MM$. 
The elements of order $4$ in $S<U$ lie in $4\mt{C}$, and the two outer classes of elements of order $4$ lie in $4\mt{D}$. 
Up to conjugacy, $U$ has two cyclic subgroups of order $16$, and each one intersects exactly one of $16\mt{B}$ and $16\mt{C}$.
\end{proposition}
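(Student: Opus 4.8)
The plan is to enumerate the conjugacy classes of $U\cong\PSU_3(4){:}4$ and determine, for each one, the $\MM$-class into which it fuses. The abstract group $U$ has a known character table, available in the {\sf GAP} character table library, from which we read off the number of classes, the element order of each class, the power maps, and the class fusion of the socle $S\cong\PSU_3(4)$ into $U$; this tells us precisely which element orders occur in $U$ and which classes lie inside $S$. Since $S$ has trivial centraliser in $\MM$ by Proposition~\ref{prop:U34:4}, and explicit generators $g_2,g_3,c_5,j_2,a_{12}$ for $U$ in \textcode{mmgroup} format are already in hand (Listing~\ref{fig:A5}), we can realise a representative of each $U$-class as a word in these generators, either by enumerating $U$ as in Section~\ref{ss:algo} or by sampling until every $U$-class has been met.

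To identify the $\MM$-class of a representative $g$ of even order $n$, we note that $t=g^{n/2}$ is an involution commuting with $g$; applying \textcode{conjugate\_involution()} both confirms that $t\in 2\mt{B}$ (so that only $2\mt{B}$ occurs among the involutions of $U$) and produces $h\in\MM$ with $t^h=z$. Then $g^h\in C_\MM(z)=\GG$, so \textcode{chi\_G\_x0()} lets us evaluate $\chi_\MM$ on $g^h$, together with auxiliary characters such as $\chi_{299}$ and $\chi_{24}$ when more information is needed; combining these values with the element order and with the power maps of the $\MM$ character table in {\sf GAP} pins down the class. The odd-order classes of $U$ have orders $3$, $5$, $13$, and $15$: order $3$ gives $3\mt{C}$ and order $5$ gives $5\mt{B}$ by the arguments already made in this section and in Section~\ref{secA5}, while for orders $13$ and $15$ we either multiply $g$ by a commuting $2\mt{B}$-involution to reduce to the even case, or realise $g$ as a power of an even-order element of $U$ (of order $2^k\cdot 13$ or $2^k\cdot 15$), after which the power maps of $\MM$ force the classes $13\mt{B}$ and $15\mt{D}$. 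Carrying this out for every class of $U$ yields the asserted list; in particular $S$ meets only $1\mt{A}$, $2\mt{B}$, $3\mt{C}$, $4\mt{C}$, $5\mt{B}$, $10\mt{E}$, $13\mt{B}$, $15\mt{D}$, while the outer elements of $U$ additionally contribute $4\mt{D}$, $6\mt{F}$, $8\mt{E}$, $12\mt{J}$, $16\mt{B}$, $16\mt{C}$.

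The delicate points, and what I expect to be the main obstacle, are the separations inside the pairs $\{4\mt{C},4\mt{D}\}$ and $\{16\mt{B},16\mt{C}\}$. For order $4$ we must verify that the socle class fuses to $4\mt{C}$ whereas both outer classes fuse to $4\mt{D}$; the power maps do not detect this, so we rely on a character value --- computed in $\GG$ after conjugating into $C_\MM(z)$ --- that distinguishes $4\mt{C}$ from $4\mt{D}$ in $\MM$, and check that the socle and outer representatives realise the two different known values. For order $16$ the two $U$-classes power into the same order-$8$ class, so the separation of $16\mt{B}$ from $16\mt{C}$ again rests entirely on character values in $\GG$; since $16\mt{B}$ and $16\mt{C}$ may be algebraically conjugate in $\MM$, one must take care to choose an invariant (a suitable character value, or the order of a product with a fixed known element) that genuinely distinguishes them. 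The closing assertion --- that $U$ has, up to conjugacy, exactly two cyclic subgroups of order $16$, each meeting exactly one of $16\mt{B}$ and $16\mt{C}$ --- then follows by reading off from the character table of $U$ that there are precisely two classes of order-$16$ elements and checking that the $\phi(16)=8$ generators of each such cyclic subgroup all lie in a single $\MM$-class. Since these fusion computations are intricate, it is prudent to confirm them with an independent implementation, as is done here using code contributed by G.~H\"ohn.
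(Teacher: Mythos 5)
Your overall strategy (enumerate the classes of $U$ from its {\sf GAP} character table, realise representatives as words in the given generators, and identify $\MM$-classes by conjugating into $\GG$ via a commuting $2\mt{B}$-involution and evaluating $\chi_\MM$ with \textcode{chi\_G\_x0()}, with power maps and rationality of the relevant $\MM$-classes tying up $4\mt{C}$ versus $4\mt{D}$, $12\mt{J}$, $8\mt{E}$, and $16\mt{B}$/$16\mt{C}$) is essentially the paper's proof for all classes except one, and for those classes it is sound. The genuine gap is the class of elements of order $13$. Both of the reductions you offer fail there: $U$ has no elements of order $2^k\cdot 13$ (its element orders are $1,2,3,4,5,6,8,10,12,13,15,16$), so the order-$13$ elements are not powers of even-order elements of $U$; and they are self-centralising in $U$, so no involution of $U$ commutes with them, while finding a commuting $2\mt{B}$-involution elsewhere in $\MM$ by search is precisely what Remark~\ref{rem:13inU} explains to be impractical. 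Power maps give nothing for order $13$, since $13\mt{A}$ and $13\mt{B}$ are not separated by their powers. The paper's actual resolution is different in kind: it computes the trace of $g_{13}=j_2g_2g_3^{-1}$ on the $196884$-dimensional module over $\mathbb{F}_3$ using \textcode{mmgroup}'s representation functionality (adapted from H\"ohn's code), which suffices because the $\chi_\MM$-values $11$ and $-2$ of $13\mt{A}$ and $13\mt{B}$ differ modulo $3$. In your write-up H\"ohn's code appears only as an optional cross-check, so the $13\mt{B}$ entry of the asserted fusion list is not actually established by the methods you describe.

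A secondary, smaller point: the same two fallbacks also fail literally for the order-$15$ class ($U$ has no elements of order $30$, hence no involution commuting with a $15$-element), but there the fix is immediate and is what the paper does --- a $15$-element powers into $3\mt{C}$ and $5\mt{B}$, and the power maps of $\MM$ leave a unique order-$15$ class with both properties --- so you only need to state that argument directly rather than route through an even-order overelement. You should also say a word about how a sampled or enumerated element is matched to a specific $U$-class (e.g.\ distinguishing $2\mt{a}$ from $2\mt{b}$, $5\mt{a}$ from $5\mt{b}$, $10\mt{a}$ from $10\mt{b}$ by centraliser orders or commuting elements of order $3$), since several of your fusion claims, such as which order-$4$ class lies in the socle, depend on that identification; the paper carries this out explicitly. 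Finally, your worry about algebraic conjugacy of $16\mt{B}$ and $16\mt{C}$ is unnecessary: all $\MM$-classes of order-$16$ elements are rational, so $\chi_\MM$ alone separates them, as the paper uses.
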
 

\begin{proof}
The calculations described here are given in detail in \cite{ourfile}. 
As before, $\chi_\MM$ is the character of the $196883$-dimensional $\mathbb{C}\MM$-module. 
Label $U$-classes by lowercase letters and $\MM$-classes by uppercase letters. 
The group $U$ has $22$ conjugacy classes, labelled $1\mt{a}$, $2\mt{a}$--$\mt{b}$, $3\mt{a}$, $4\mt{a}$--$\mt{c}$, $5\mt{a}$--$\mt{b}$, $6\mt{a}$, $8\mt{a}$--$\mt{b}$, $10\mt{a}$--$\mt{b}$, $12\mt{a}$--$\mt{b}$, $13\mt{a}$, $15\mt{a}$, and $16\mt{a}$--$\mt{d}$ in its character table in {\sf GAP}, constructed as {\sf CharacterTable("U3(4).4")}. 

We already know that $g_3 \in 3\mt{C}$, so $3a$ fuses to $3\mt{C}$. 
The socle $S \cong P$ of $U$ has one class of involutions. 
Comparing centraliser orders in $S$ and $U$ indicates that these involutions comprise the $U$-class $2\mt{a}$. 
We know that $g_2 \in S \cap 2\mt{B}$, so $2\mt{a}$ fuses to $2\mt{B}$. 
All elements of order $12$ in $U$ power to $2\mt{b}$, so $a_{12}^6 \in 2\mt{b}$, and \textcode{chi\_G\_x0()} confirms that $2\mt{b}$ also fuses to $2\mt{B}$. 
The element $a_{12}^2$ commutes with $a_{12}^6$, so we can conjugate it into $\GG$ and apply \textcode{chi\_G\_x0()} to find that its $\chi_\MM$-value is $-1$. 
This shows that $6\mt{a}$ fuses to $6\mt{F}$. 
The socle $S<U$ has a unique class of elements of order $4$; based on centraliser orders, these elements comprise the class $4\mt{a}$. 
The element $j_2g_3^{-1}(j_2g_3)^5 \in S$ has order $4$, and its $\chi_\MM$-value is $19$, so $4\mt{a}$ fuses to $4\mt{C}$. 
The classes $4\mt{b}$--$\mt{c}$ lie outside $S$, and they fuse to a single $\MM$-class because they yield a single class of cyclic subgroups in $U$ and because the $\MM$-classes of order-$4$ elements are rational. 
All elements of order $12$ in $U$ power to $4\mt{b} \cup 4\mt{c}$. 
The $\chi_\MM$-value of $a_{12}^3 \in 4\mt{b} \cup 4\mt{c}$ is $-13$, so $4\mt{b}$--$\mt{c}$ fuse to $4\mt{D}$. 
There is a unique class of cyclic subgroups of order $12$ in $U$, so all order-$12$ elements lie in a single $\MM$-class because all classes of order-$12$ elements in $\MM$ are rational. 
The only order-$12$ elements in $\MM$ that power to both $6\mt{F}$ and $4\mt{C} \cup 4\mt{D}$ are those in $12\mt{J}$, so $12\mt{a}$--$\mt{b}$ fuse to $12\mt{J}$. 
The classes $10\mt{a}$--$\mt{b}$ power to $2\mt{a}$--$\mt{b}$ and $5\mt{a}$--$\mt{b}$, respectively. 
The classes $5\mt{a}$--$\mt{b}$ are distinguished by their centraliser orders; in particular, only a $5\mt{a}$-element is centralised by an element of order $3$. 
Given that $c_5$ commutes with $g_3$, it lies in $5\mt{a}$, and we already know that $c_5 \in 5\mt{B}$. 
The element $g_{10} = c_5^3g_2$ has order $10$ and squares to $c_5$, so $g_{10} \in 10\mt{a}$. 
The $\chi_\MM$-value of $g_{10}$ is $0$, so it lies in $10\mt{E}$. 
The classes $2\mt{a}$--$\mt{b}$ are distinguished similarly: only a $2\mt{b}$-element is centralised by an element of order $3$. 
The element $h_{10} = a_{12}^2c_5$ has order $10$, and the involution $h_{10}^5$ commutes with $g_3^{j_2}[h_{10}^5,g_3^{j_2}]^2$, which has order~$3$. 
Therefore, $h_{10}^5 \in 2\mt{b}$, so $h_{10} \in 10\mt{b}$. 
The $\chi_\MM$-values of $h_{10}$ and $h_{10}^2$ are the same as those of $g_{10}$ and $g_{10}^2$, so $10\mt{b}$ and $5\mt{b}$ also fuse to $10\mt{E}$ and $5\mt{B}$. 
The only elements of order $15$ in $\MM$ that power to both $5\mt{B}$ and $3\mt{C}$ are those in class $15\mt{C}$, so $15\mt{a}$ fuses to $15\mt{C}$. 
The classes $16\mt{a}$--$\mt{b}$ yield a single class of cyclic subgroups of order $16$ in $U$, as do the classes $16\mt{c}$--$\mt{d}$. 
The three $\MM$-classes of order-$16$ elements are rational, so each of the pairs $16\mt{a}$--$\mt{b}$ and $16\mt{c}$--$\mt{d}$ fuses to a single $\MM$-class. 
We are claiming that all order-$16$ elements in $U$ lie in $16\mt{B} \cup 16\mt{C}$, so it suffices to exhibit one element that lies in $16\mt{B}$ and one element that lies in $16\mt{C}$. 
The element $g_{16} = a_{12}j_2$ has order $16$ and $\chi_M$-value $7$, which indicates that it lies in $16\mt{C}$. 
The element $h_{16} = c_5^3a_{12}j_2$ has order $16$ and $\chi_\MM$-value $-1$, so it lies in $16\mt{B}$. 
The elements of order $8$ in $U$ yield a single class of cyclic subgroups of order $8$, and all $\MM$-classes of order-$8$ elements are rational, so $8\mt{a}$--$\mt{b}$ fuse to a single $\MM$-class. 
The square of $g_{16}$ has $\chi_\MM$-value $3$, which indicates that $8\mt{a}$--$\mt{b}$ fuse to $8\mt{E}$. 

It remains to show that $13\mt{a}$ fuses to $13\mt{B}$.  
Note that $g_{13} = j_2g_2g_3^{-1} \in S$ has order $13$. 
Although we have not needed to use them anywhere else in the paper, \textcode{mmgroup} includes functions that allow one to act with elements of $\MM$ on its reducible module $V$ of dimension $196884$ over certain `small' rings, including $\mathbb{F}_3$. 
In particular, one can compute the trace of $g \in \MM$ in this representation by computing the image of a (certain) basis for $V$ under $g$. 
This is sufficient for distinguishing between the classes $13\mt{A}$ and $13\mt{B}$, because their $\chi_\MM$-values $11$ and $-2$ differ modulo $3$. 
We calculated the trace of $g_{13}$ modulo $3$ to be $-1$ by adapting some code provided to us by Gerald H\"ohn \cite{hohn}; see Remark~\ref{rem:13inU}. 
This indicates that $g_{13} \in 13\mt{B}$. 
Our (adapted) code is included in \cite{ourfile}. 
\end{proof}

We now complete the proof of Theorem~\ref{thmU34} by showing that $U \cong \PSU_3(4){:}4$ is not contained in any other maximal subgroup $\MM$; cf. Table~\ref{tab:allmax} and the proof of Proposition~\ref{thm_PGL}. 
Most maximal subgroups $L$ are ruled out because $|U|$ does not divide $|L|$, or because $C_\MM(L) \neq 1 = C_\MM(U)$. 
The remaining candidates are $2^{2} \udot ^{2}\mt{E}_6(2){:}\mt{S}_3$, $3^{1+12}\udot 2\udot \mt{Suz}{:}2$, $\mt{S}_3\times\mt{Th}$, $(3^2{:}2\times\mt{P}\Omega^+_8(3))\udot\mt{S}_4$, and $5^4{:}(3\times 2\udot\mt{PSL}_2(25)){:}2$. 
The last three of these groups contain no elements of order $16$. 
Upon checking the class fusion from the first two groups to $\MM$ in {\sf GAP}, we see that $2^{2} \udot ^{2}\mt{E}_6(2){:}\mt{S}_3$ does not intersect the $\MM$-class $15\mt{D}$, and that $3^{1+12}\udot 2\udot \mt{Suz}{:}2$ does not intersect the $\MM$-classes $16\mt{B}$ or $16\mt{C}$. 
Hence, Proposition~\ref{prop:U34_fusions} implies that $U$ is not contained in $L$ in these cases. 
Therefore, $U$ is a maximal subgroup of $\MM$. 

\begin{remark} \label{rem:13inU}
The code that we used in the final case in the proof of Proposition~\ref{prop:U34_fusions} was adapted (for the sake of brevity) from more general code, kindly provided to us by Gerald H\"ohn \cite{hohn}, that determines the conjugacy class of an element $g$ of $\MM$ in \textcode{mmgroup}, assuming that $|g| \neq 27$. 
We note that H\"ohn's code is based on a paper of Barraclough and Wilson~\cite{BW}. 
Functions for acting on the aforementioned $\MM$-module $V$ are described in the section ``The representation of the Monster group'' in the \textcode{mmgroup} documentation~\cite{sey_python}. 
Note also that, in principle, we could find a $2\mt{B}$-involution commuting with $g_{13}$, conjugate $g_{13}$ into $\GG$, and apply \textcode{chi\_G\_x0()}. 
However, finding such an involution is difficult: $g_{13}$ is self-centralising in $U$, and a random search outside of $U$ has little chance of success. 
(Although we have previously constructed the normaliser a $13\mt{B}$-element as described in Remark~\ref{rem:N13}, this construction was based on {\em starting} with the commuting involution, namely $z \in Z(\GG)$.)
\end{remark}

\section{Proof of Theorem~\ref{thmL28}}\label{sec_L28}

\noindent The group $P = \PSL_2(8)$ has order $2^3{\cdot}3^2{\cdot}7$ and maximal subgroups $2^3{:}7$, $\mt{D}_{14}$, and $\mt{D}_{18}$, all unique up to conjugacy. 
For each element $g_7$ of order $7$ in a fixed maximal $2^3{:}7 < P$, there are exactly $7$ involutions $j_2\in P$ such that $\langle g_7,j_2\rangle\cong \mt{D}_{14}$ and $\langle 2^3{:}7,j_2\rangle=P$. 
As explained in Section~\ref{sec_known_subgroups}, every as-yet-unclassified subgroup of $\MM$ isomorphic to $P$ must have its elements of orders $2$ and $7$ lying in the $\MM$-classes $2\mt{B}$ and $7\mt{B}$. 
(Note that all elements of order $7$ in $P$ fuse to a single $\MM$-class.) 
We therefore aim to construct subgroups $P$ of $\MM$ by first constructing a Borel subgroup $B = 2^3{:}7$ with the correct conjugacy class fusion, and then extending $B$ to $P$ via an involution that extends the cyclic $7$ to a $\mt{D}_{14}$. 

Our first task is to find all $B<\MM$ up to conjugacy. 
Rob Wilson suggested to us that every such subgroup could be found in a conjugate of $\GG \cong \Gx$. 
We begin by proving this.

\begin{lemma} \label{lem_7B}
If $B$ is a subgroup of $\MM$ isomorphic to $2^3{:}7$ that intersects only the $\MM$-classes $2\mt{B}$ and $7\mt{B}$ non-trivially, then, up to conjugacy, $B$ is contained in the maximal subgroup $\GG \cong 2^{1+24}\udot \mt{Co}_1$ of $\MM$. 
\end{lemma}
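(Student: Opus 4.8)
The plan is to reduce the statement to a question about the centraliser $C_\MM(B)$, and then resolve that question using the structure of the normaliser in $\MM$ of a $2\mt{B}$-pure elementary abelian $2$-subgroup of rank $3$.

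First I would unwind the structure of $B\cong 2^3{:}7$. Its Fitting subgroup $E=O_2(B)\cong 2^3$ is the unique minimal normal subgroup of $B$, and a Sylow $7$-subgroup $\langle g_7\rangle$ of $B$ acts on $E$ as a Singer cycle, hence without nontrivial fixed points on $E$. Consequently the seven involutions of $E$ form a single $B$-conjugacy class, and, being the only involutions of $B$, they all lie in the $\MM$-class $2\mt{B}$ by hypothesis; that is, $E$ is a \emph{$2\mt{B}$-pure} $2^3$. I would also record the elementary reformulation of the goal: $B$ lies in a conjugate of $\GG\cong\Gx$ if and only if $C_\MM(B)$ contains a $2\mt{B}$-involution. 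Indeed, if $B\leq\GG^{g}=C_\MM(z^{g})$ then $z^{g}\in C_\MM(B)$ is a $2\mt{B}$-involution; conversely, any $2\mt{B}$-involution $w\in C_\MM(B)$ gives $B\leq C_\MM(w)\cong\GG$.

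Next I would pin down $N:=N_\MM(E)$. Since $B$ normalises $E$, we have $B\leq N$, and $B$ induces on $E$ a group containing a Singer $7$-cycle of $\Aut(E)\cong\GL_3(2)$, so $7\mid|N/C_\MM(E)|$. Using known facts about $2\mt{B}$-pure elementary abelian subgroups of $\MM$, which can be extracted from the classification of the maximal $2$-local subgroups of $\MM$ \cite{MS02}, one shows that, up to conjugacy, $E$ is the normal $2^3$ at the foot of $O_2(L)$, where $L:=2^{3+6+12+18}\udot(\PSL_3(2)\times 3\mt{S}_6)$ is the corresponding maximal subgroup of $\MM$, with the $\PSL_3(2)$-part inducing the full $\GL_3(2)$ on $E$ and the remaining chief factors centralising $E$. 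Then $N_\MM(E)=L$, hence $B\leq L$, and $C_\MM(E)=C_L(E)$ has the explicit shape $2^{3+6+12+18}\udot 3\mt{S}_6$.

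Then I would compute $C_\MM(B)=C_{C_\MM(E)}(g_7)$ and locate a $2\mt{B}$-involution in it. Since $|g_7|=7$ does not divide $|\Aut(3\mt{S}_6)|$, the automorphism of $C_\MM(E)$ induced by $g_7$ acts trivially on the quotient $C_\MM(E)/O_2(C_\MM(E))\cong 3\mt{S}_6$, so by coprime action $C_\MM(B)$ maps \emph{onto} $3\mt{S}_6$; its kernel is the fixed-point subgroup of $g_7$ on the $2$-group $2^{3+6+12+18}$, which one reads off from the $\mathbb{F}_2\langle\overline{g_7}\rangle$-module structure of the chief factors $2^3,2^6,2^{12},2^{18}$ (each a known $\PSL_3(2)$-module, restricted to a Singer cycle). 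In particular $C_\MM(B)$ has even order, and it remains only to exhibit one $2\mt{B}$-involution in it: one takes a suitable involution of the $\mt{S}_6$-section (e.g. a lift of a double transposition) and confirms it lies in $2\mt{B}$ either from the known class fusion of $L$ into $\MM$, or by conjugating it into $\GG$ with \textcode{conjugate\_involution()} and evaluating a character via \textcode{chi\_G\_x0()}. Alternatively, once $L$ is realised in \textcode{mmgroup}, the last two paragraphs can be replaced by a direct enumeration of the subgroups $2^3{:}7$ of $L$ up to conjugacy and a computation of their centralisers.

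The main obstacle is the identification of $N_\MM(E)$, and hence of $C_\MM(E)$: it rests on knowing the $\MM$-classes of $2\mt{B}$-pure $2^3$ (via the $2$-local analysis) and on carefully tracking the chief-factor structure of the relevant maximal $2$-local subgroup. By contrast, the coprime-action computation of $C_\MM(B)$ and the verification that a surviving involution is of class $2\mt{B}$ rather than $2\mt{A}$ are comparatively routine, the latter amounting to a short \textcode{mmgroup} computation or a lookup in tabulated fusion data.
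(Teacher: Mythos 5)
Your reduction in the first paragraph is fine (all involutions of $B$ lie in $E=O_2(B)$, so $E$ is $2\mt{B}$-pure, and $B$ lies in a conjugate of $\GG$ if and only if $C_\MM(B)$ contains a $2\mt{B}$-involution), but the second step contains a genuine error that the rest of the argument inherits. You claim that, up to conjugacy, every $2\mt{B}$-pure $2^3$ in $\MM$ is the normal $2^3$ at the bottom of $O_2(L)$ with $L = 2^{3+6+12+18}\udot(\PSL_3(2)\times 3\mt{S}_6)$, so that $N_\MM(E)=L$. What Meierfrankenfeld--Shpectorov actually give is uniqueness up to conjugacy of the $2\mt{B}$-pure \emph{singular} $2^3$ (singular meaning $E\leq \QQ_u$ for every non-trivial $u\in E$, where $\QQ_u$ is the normal $2^{1+24}$ of $C_\MM(u)$); $L$ is the normaliser of that singular $2^3$. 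Purity alone does not imply singularity, and there are several $\MM$-classes of $2\mt{B}$-pure $2^3$ (just as there are three classes of $2\mt{B}$-pure $2^2$), so the identification $N_\MM(E)=L$ is unjustified. Worse, under the hypothesis of the lemma it is impossible: a Sylow $7$-subgroup of $L$ is centralised by a group of order $3^3$, while the centraliser in $\MM$ of a $7\mt{B}$-element has $3$-part only $3^2$, so $L$ meets only the class $7\mt{A}$ and cannot contain $B$. The same numerology shows your intended conclusion that $C_\MM(B)$ surjects onto $3\mt{S}_6$ cannot hold, since $C_\MM(B)\leq C_\MM(g_7)$ for $g_7\in 7\mt{B}$. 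So the groups $E$ relevant to the lemma are precisely the \emph{non-singular} $2\mt{B}$-pure $2^3$'s, and your route collapses at the point where you pin down $N_\MM(E)$.

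The paper's proof goes around exactly this difficulty: it does not identify $N_\MM(E)$ at all, but only uses $B\leq N_\MM(E)\leq H_i$ for one of the seven maximal $2$-local subgroups $H_1,\dots,H_7$ of \cite{MS02,MS03}, and then eliminates every $H_i$ other than $\GG$ by showing it contains no $7\mt{B}$-elements (via class-fusion data for $2\udot\BB$ and $2^2\udot{}^2\mt{E}_6(2){:}\mt{S}_3$, an explicit \textcode{mmgroup} computation for $2^{2+11+22}\udot(\mt{M}_{24}\times\mt{S}_3)$, the $3$-part argument above for $L$, and reductions of the $2^{5+10+20}$ and $2^{10+16}$ cases to the previous ones). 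If you want to salvage your strategy, you would need either to classify the non-singular $2\mt{B}$-pure $2^3$'s and their normalisers (which is essentially what Proposition~\ref{prop_2^3:7} does computationally, after the lemma has placed $B$ inside $\GG$), or to switch to the paper's elimination-by-fusion argument over the list of maximal $2$-locals.
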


\begin{proof}
The subgroup $B$ of $\MM$ is contained in the normaliser of its normal subgroup $2^3$. 
The normaliser of the $2^3$ is contained in some $2$-local subgroup of $\MM$ that is maximal with respect to being $2$-local. 
The latter subgroups were classified by Meierfrankenfeld and Shpectorov \cite{MS02,MS03}; they are precisely the maximal subgroups $H_1 = 2\udot\BB$, $H_2 = 2^{2}\udot {}^2\mt{E}_6(2){:}\mt{S}_3$, $H_3 = \GG$, $H_4 = 2^{2+11+22}\udot(\mt{M}_{24}\times\mt{S}_3)$, $H_5 = 2^{3+6+12+18}\udot (\mt{PSL}_3(2)\times 3\mt{S}_6)$, $H_6 = 2^{5+10+20}\udot(\mt{S}_3\times \mt{PSL}_5(2))$, and $H_7 = 2^{10+16}\udot\mt{P}\Omega_{10}^+(2)$.
It suffices to show that, among these subgroups, only $H_3$ contains $7\mt{B}$-elements. 
We first note that $H_i \cap 7\mt{B} = \emptyset$ for $i \in \{1,2\}$ and $H_3 \cap 7\mt{B} \neq \emptyset$, by the conjugacy class fusion data stored in {\sf GAP}~\cite{GAPbc,gap}. 
Next, we recall the following from \cite[Section~4]{MS02}. 
Let $U < \MM$ be an elementary abelian $2$-group that is $2\mt{B}$-pure, i.e. does not intersect $2\mt{A}$. 
The group $U$ is said to be {\em singular} if $U \leq \QQ_u$ for all $u \in U \setminus \{1\}$, where $\QQ_u$ denotes the normal subgroup $2^{1+24}$ of $C_\MM(u) \cong \GG$. 
Up to conjugacy, $\MM$ contains unique $2\mt{B}$-pure singular subgroups $2^2$,  $2^3$, and $2^4$, and two classes of $2\mt{B}$-pure singular subgroups $2^5$. 

The group $H_4$ is the normaliser of a $2\mt{B}$-pure singular subgroup $U \cong 2^2$ of $\MM$. 
By \cite[Lemma~4.5]{MS02}, $H_4/C_\MM(U) \cong \mt{S}_3$, so every $h_7 \in H_4$ of order $7$ lies in $C_\MM(U)$. 
A Sylow $7$-subgroup of $H_4$ has order $7$, so it suffices to exhibit such an element and check that every non-trivial power of it lies in $7\mt{A}$. 
Recall that the central involution $z \in \GG$ is labelled \textcode{M<x\_1000h>} in \textcode{mmgroup}.  
The element $y$ with label \textcode{M<x\_800h>} lies in $\QQ \cap 2\mt{B}$, as does $zy$. 
Hence, $U = \langle z,y \rangle \cong 2^2$ is $2\mt{B}$-pure, and \cite[Lemma~4.1]{MS02} (or a further calculation) implies that $U$ is singular. 
The element \textcode{M<y\_3d6h*x\_125dh*d\_259h*p\_81455268>} has order $7$  and centralises $U$. 
Its non-trivial powers have $\chi_\MM$-value $50$, so they all lie in $7\mt{A}$. 

The group $H_5$ is the normaliser of a $2\mt{B}$-pure singular $2^3$. 
Consider a subgroup of $H_5$ of shape $2^{3+6+12+18}.(7 \times (3^2{:}3))$, where the quotient is the direct product of a Sylow $7$-subgroup of $\PSL_3(2)$ and a Sylow $3$-subgroup of $3\mt{S}_6$. 
The Schur--Zassenhaus Theorem implies that the extension splits, so a Sylow $7$-subgroup of $H_5$ is centralised by a group of order $3^3$. 
The order of the centraliser of a $7\mt{B}$-element in $\MM$ has $3$-part only $3^2$, so it follows that $H_5 \cap 7\mt{B} = \emptyset$. 

The group $H_6$ is the normaliser of a $2\mt{B}$-pure singular $2^5$ of type ``$2$''.  
By \cite[Lemma~4.2]{MS02}, the $2^5$ can be regarded as a faithful irreducible module for the subquotient $\PSL_5(2)$ of $H_6$. 
Every element of order $7$ in $\PSL_5(2)$ normalises a $3$-dimensional subspace of such a module, so every element of order $7$ in $H_6$ normalises a $2\mt{B}$-pure singular $2^3$, hence lies in a conjugate of $H_5$, and therefore in $7\mt{A}$. 

The group $H_7$ is the normaliser of a subgroup $A \cong 2^{10}$ of $\MM$ known as an {\em ark}; see \cite[Section~4.5]{MS02}.
By \cite[Lemma~5.7]{MS02}, the subgroup $A$ admits a non-degenerate quadratic form of $+$ type defined by declaring the singular vectors to be the identity and the $2\mt{B}$-involutions in $A$.  
(Note that the usage of ``singular'' here is not the same as above.)
By \cite[Lemma 5.8]{MS02}, the quotient $G = \mt{P}\Omega_{10}^+(2)$ of $H_7$ acts naturally on $A$. 
Upon constructing the natural module for $G$ in {\sf GAP}, one can check that every element of order $7$ in $G$ centralises a $4$-dimensional subspace containing both singular and non-singular vectors. 
Hence, every element of order $7$ in $H_7$ centralises a $2\mt{A}$-involution, and so lies in a conjugate of $H_1$.
\end{proof}

\begin{proposition} \label{prop_2^3:7}
The elements $e_1$, $e_2$, $e_3$, and $g_7$ of $\MM$ given in \textcode{mmgroup} format in Listing~\ref{fig:2^3:7} satisfy $e_1,e_2,e_3 \in \QQ \cap 2\mt{B}$ and $g_7 \in \GG \cap 7\mt{B}$. 
Every subgroup $B$ of $\MM$ that is isomorphic to $2^3{:}7$ and intersects only the $\MM$-classes $2\mt{B}$ and $7\mt{B}$ non-trivially is conjugate in $\MM$ to one of $B_i = \langle e_i,g_7 \rangle$, $i \in \{1,2,3\}$. 
\end{proposition}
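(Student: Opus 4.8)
The plan is to verify the assertions about the specific elements directly, and then to use Lemma~\ref{lem_7B} to reduce the classification of all $B$ as in the statement to a manageable computation inside $\GG$. First I would check by \textcode{mmgroup} computation that each $e_i$ lies in $\QQ$ (using \textcode{in\_Q\_x0()}), is an involution different from $1$ and from the central involution $z$, and belongs to the $\MM$-class $2\mt{B}$ (using \textcode{conjugate\_involution()}); and that $g_7\in\GG$ has order $7$, with its $\GG$-class identified from \textcode{chi\_G\_x0()} and seen to fuse to $7\mt{B}$ via the class-fusion data of Section~\ref{sec_MM}. Enumerating each $B_i=\langle e_i,g_7\rangle$ as in Section~\ref{ss:algo} (the groups have order $56$) then shows $B_i\cong 2^3{:}7$, and a short further computation confirms that $B_i$ meets only the $\MM$-classes $2\mt{B}$ and $7\mt{B}$. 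This proves the first sentence and exhibits $B_1,B_2,B_3$ as subgroups of the kind under consideration.

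For the converse, let $B\leq\MM$ be any subgroup isomorphic to $2^3{:}7$ meeting only $2\mt{B}$ and $7\mt{B}$; write $V=O_2(B)\cong 2^3$, and note that an order-$7$ element $t$ of $B$ acts irreducibly and fixed-point-freely on $V$, so that $\langle t,v\rangle=B$ for every $v\in V\setminus\{1\}$. By Lemma~\ref{lem_7B} we may assume $B\leq\GG$. Since, by Schur--Zassenhaus, the order-$7$ elements of $\GG$ over a given $\mt{Co}_1$-class form a single $\GG$-class, $\GG$ has exactly two classes of order-$7$ elements, and the fusion data of Section~\ref{sec_MM} shows that one fuses to $7\mt{A}$ and the other to $7\mt{B}$; conjugating within $\GG$, we may therefore assume $t=g_7$, so that $B=\langle V,g_7\rangle$ with $V$ a $2\mt{B}$-pure elementary abelian $2^3$ normalised by $g_7$ and having no non-trivial $g_7$-fixed point. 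I would next show that $V$ may be taken inside $\QQ$: writing $\bar V$ for the image of $V$ in $\GG/\QQ\cong\mt{Co}_1$, coprime action of $\langle g_7\rangle$ on $V$ together with the triviality of any order-$7$ automorphism of an elementary abelian $2$-group of order $2$ or $4$ forces either $\bar V=1$ (so $V\leq\QQ$) or $\bar V\cong 2^3$; in the latter case $\langle\bar V,\bar g_7\rangle\cong 2^3{:}7$ would embed in $\mt{Co}_1$ with $\bar g_7$ in a prescribed $\mt{Co}_1$-class, and I would exclude this possibility (or show it produces nothing new) by examining the $2^3{:}7$ subgroups of $\mt{Co}_1\leq\GL_{24}(2)$ in {\sc Magma} and their preimages in $\GG$.

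Granting $V\leq\QQ$, I would use the matrix $\pi(g_7)\in\GL_{24}(2)$ (Section~\ref{secQ}) to decompose $\QQ/Z(\QQ)\cong\mathbb{F}_2^{24}$ as a module for $\langle g_7\rangle$ over $\mathbb{F}_2$, list its finitely many three-dimensional submodules on which $g_7$ acts fixed-point-freely (these are the submodules isomorphic to one of the two three-dimensional irreducibles of $\mathbb{F}_2[\mathbb{Z}/7]$), lift each to $\QQ$, and retain those that lift to a $2\mt{B}$-pure elementary abelian $2^3$. For each surviving $V$ I would form $B=\langle V,g_7\rangle$; since $N_\GG(\langle g_7\rangle)$ is readily constructed by passing to $\mt{Co}_1$ (as in Section~\ref{secQ}), and two of these submodules related by an element of $N_\GG(\langle g_7\rangle)$ give $\GG$-conjugate, hence $\MM$-conjugate, groups $\langle V,g_7\rangle$, it then suffices to compute the $N_\GG(\langle g_7\rangle)$-orbits on the surviving $V$'s. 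I expect to find exactly three, with representatives $\langle e_i,e_i^{g_7},e_i^{g_7^2}\rangle$, so that $B$ is $\MM$-conjugate to one of $B_1,B_2,B_3$.

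The main obstacle, I think, is establishing completeness rather than performing the individual \textcode{mmgroup} checks: one must correctly pin down the $\GG$-class of $g_7$ (so that the reduction to $t=g_7$ is valid), dispose convincingly of the case $\bar V\cong 2^3$, and then carry out the substantial bookkeeping of lifting, $2\mt{B}$-purity testing, and orbit computation for all of the candidate submodules $V\leq\QQ$.
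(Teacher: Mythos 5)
Your overall architecture is the same as the paper's: verify the explicit elements by \textcode{mmgroup} computation, invoke Lemma~\ref{lem_7B} to push $B$ into $\GG$, reduce to $t=g_7$ via the fact that $\GG$ has exactly two classes of order-$7$ elements of which only one fuses to $7\mt{B}$, observe that transitivity of $\langle g_7\rangle$ on $V\setminus\{1\}$ forces $V\leq\QQ$ or $V\cap\QQ=1$, and finally enumerate the candidate $2\mt{B}$-pure lifts of fixed-point-free $3$-dimensional $\langle g_7\rangle$-submodules of $\QQ/Z(\QQ)$ up to the action of a suitable normaliser. Your variant of the last step (listing $3$-dimensional submodules and taking $N_\GG(\langle g_7\rangle)$-orbits, rather than the paper's $N_{\mt{Co}_1}(\pi(g_7))$-orbits on $1$-spaces followed by lifting and testing both elements of each coset $xZ(\QQ)$) is essentially the same computation, and your Schur--Zassenhaus argument for the reduction to $t=g_7$ is sound and consistent with the class data quoted in Section~\ref{sec_MM}.

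The one substantive divergence, and the one genuine soft spot, is the case $\bar V\cong 2^3$ (i.e.\ $V\cap\QQ=1$). The paper disposes of it in one line: if $V\cap\QQ=1$ then $B\cong\pi(B)$ is a $2^3{:}7$ in $\mt{Co}_1$ whose order-$7$ elements lie in $\mt{Co}_1$-class $7\mt{A}$ (because $\mt{Co}_1$-class $7\mt{A}$ lifts to $\MM$-class $7\mt{B}$), and no such subgroup of $\mt{Co}_1$ exists by \cite[Lemma~5]{W16}. You instead defer this to ``examining the $2^3{:}7$ subgroups of $\mt{Co}_1\leq\GL_{24}(2)$ in {\sc Magma} and their preimages in $\GG$'', with no indication of how that classification would actually be carried out. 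This is not a routine {\sc Magma} computation on a group of order $\approx 4\cdot 10^{18}$ (it is precisely the content of a lemma in Wilson's published work), and even if you obtained the classification you would then still have to analyse complements to $\QQ$ over each such image to show nothing new arises. So as written, the exclusion of this case is asserted rather than proved; either cite \cite[Lemma~5]{W16} together with the fusion of $\mt{Co}_1$-$7\mt{A}$ to $\MM$-$7\mt{B}$, or supply a concrete method for the $\mt{Co}_1$ computation. A further minor caution: your conjugacy bookkeeping requires the orbits to be taken under a group you can actually construct in full (or at least you must account for the possibility that a proper subgroup of $N_\GG(\langle g_7\rangle)$ yields finer orbits, in which case additional merging is needed before you can conclude that exactly three $\MM$-classes arise); the paper sidesteps this by working with $N_{\mt{Co}_1}(\pi(g_7))$, which is straightforward to obtain.
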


\begin{proof}
An \textcode{mmgroup} calculation confirms the first assertion. 
Hence, up to conjugacy, we can take the cyclic $7$ in $B$ to be generated by $g_7$. 
By Lemma~\ref{lem_7B}, every $B<\MM$ of the desired form can be found, up to conjugacy, in $\GG$. 
We claim that the normal subgroup $E \cong 2^3$ of $B$ is then contained in $\QQ$. 
Consider again the projection $\pi : \GG \rightarrow \mt{Co}_1 < \mt{GL}_{24}(2)$ described in Section~\ref{secQ}. 
Given that elements of order $7$ in $B$ act transitively on the non-trivial elements of $E$, we have either $E < \QQ$ or $E \cap \QQ = \{1\}$.
Assume, for a contradiction, that $E\cap \QQ=\{1\}$. In this case,  $B \cong \pi(B) < \mt{Co}_1$. 
A {\sf GAP} calculation similar to the first calculation in Listing~\ref{figgap1} shows that the $\mt{Co}_1$-classes $7\mt{A}$ and $7\mt{B}$ lift to the $\MM$-classes $7\mt{B}$ and $7\mt{A}$, so $\pi(B)\cong 2^3{:}7$ is a subgroup of $\mt{Co}_1$ containing elements of the $\mt{Co}_1$-class $7\mt{A}$. 
However, by \cite[Lemma~5]{W16}, no such subgroup exists in $\mt{Co}_1$. 
Therefore, $E<\QQ$, as claimed.

The following calculations are documented in \cite{ourfile}.  
Passing to {\sc Magma} \cite{magma} as described in Section~\ref{secQ}, we compute the orbits of $N_{\mt{Co}_1}(\pi(g_7))$ on the set of $1$-dimensional subspaces of $\QQ/Z(\QQ) \cong 2^{24}$. 
(Note that an element of $\mt{Co}_1$-class $7\mt{A}$ preserves a decomposition of $2^{24}$ into eight $3$-dimensional irreducible sub-modules.)
We then lift a representative of each orbit back to an element $x$ of $\QQ$, and check whether either element of the coset $x Z(\QQ)$ extends $\langle g_7 \rangle$ to a group of the form $2^3{:}7$ in which the normal $2^3$ is $2\mt{B}$-pure. 
Exactly three orbits pass this test, and they yield the elements $e_1$, $e_2$, and $e_3$. 
\end{proof}

\begin{remark}
By construction, the groups $B_i$ in Proposition~\ref{prop_2^3:7} are not conjugate in $\GG$. 
We have not attempted to check whether they are conjugate in $\MM$, but this is not needed in order to complete the proof of Theorem~\ref{thmL28}. 
Out of interest, we do note the following. 
The subgroups $2^2$ of each $B_i$ are clearly conjugate in $\MM$, given that they are conjugate under $\langle g_7 \rangle$. 
It turns out that subgroups $2^2$ of {\em distinct} $B_i$ are also conjugate in $\MM$. 
By \cite[Lemmas 7.1 and 7.7]{MS02}, there are exactly three classes of $2\mt{B}$-pure $2^2 < \MM$. 
Up to conjugacy, each such $2^2$ contains $z \in Z(\GG)$, and its $\MM$-class is determined by the $\GG$-class of either of its other two involutions, which can be one of $2\mt{C}$, $2\mt{F}$, or $2\mt{G}$ (in the labelling used in the character table of $\GG$ in {\sf GAP}, which differs from the labelling used in \cite{MS02}). 
The $\GG$-classes $2\mt{C}$, $2\mt{F}$, and $2\mt{G}$ are distinguished by the character $\chi_{299}$ for $\GG$ (see Listing~\ref{fig:mmgroup1}), which evaluates to $299$, $43$, or $11$, respectively. 
Upon conjugating each $B_i$ so that it contains $Z(G) = \langle z \rangle$, and selecting an involution $y \neq z$ in each resulting conjugate, we find that $\chi_{299}(y) = 43$ in all cases.
(Equivalently, $y$ projects to the $\mt{Co}_1$-class $2A$, with centraliser $2^{1+8}.\mt{P}\Omega_8^+(2)$, and $C_\MM(\langle z,y \rangle) \cong (2^9 \times 2^{1+6}).2^{1+8}.2^6\mt{A}_8$.)
\end{remark}

To complete the proof of Theorem~\ref{thmL28}, we attempt to extend each $B_i \cong 2^3{:}7$ to a subgroup of $\MM$ isomorphic to $P = \PSL_2(8)$ by adjoining a $2\mt{B}$-involution $j_2$ that inverts $g_7$. 
All such involutions can be found in $N = N_\MM(\langle g_7 \rangle)$, and the $(2\mt{B},2\mt{B},7\mt{B})$ class multiplication coefficient of $\MM$ indicates that there are $72030$ of them. 
As usual, constructing $N$ in \textcode{mmgroup} is a non-trivial task. 
We constructed generators for a `large' subgroup $Y$ of $N$ by proceeding as described in Remark~\ref{remN7B}, and managed to find all $j_2$ via random search in $Y$ without having to determine whether $Y = N$. 
We then checked each $j_2$ to confirm that none of them extends any $B_i$ to $P$. 
This was done by checking the orders of random elements of $\langle B_i,j_2 \rangle$ and discarding $j_2$ once an element of order not arising in $P$ was found. 

\begin{remark} \label{remN7B}
We had hoped to construct a `large' subgroup of $N_7 = N_\MM(\langle g_7 \rangle) \cong 7^{1+4}{:}(3 \times 2\mt{S}_7)$ via the strategy that we used to construct (part of) the normaliser of a $5\mt{B}$-element; see the fifth paragraph of Section~\ref{sec_L16}. 
The idea there was to find two commuting $2\mt{B}$-involutions centralising the $5\mt{B}$-element, and thereby construct two subgroups of its normaliser by passing to the quotient $\mt{Co}_1$ in two conjugates of $\GG$. 
This strategy failed for $7\mt{B}$-elements because, although all involutions in $N_7$ are of class $2\mt{B}$, there is no pair of commuting involutions that also centralise $g_7$ (and a single such involution produced a subgroup of $N_7$ that was too small for our purposes).  
Instead, we first constructed the normaliser $N_3 \cong 3\udot \mt{Fi}_{24}$ of an element of class $3\mt{A}$, which can be obtained e.g. by powering any element of order $48$. 
The latter is easily found by random selection and also powers to a $2\mt{B}$-involution, so we could construct generators for $N_3$ via the `two commuting involutions' strategy. 
We chose $g_7 \in N_3$ and constructed our subgroup $Y$ of $N_7$ by first generating the normaliser of $g_7$ in $N_3$ using an SLP from the {\sf GAP} package {\sf AtlasRep} associated with the online Atlas~\cite{atlas-web}, 
and then conjugating into $\GG$ using a second commuting $2\mt{B}$-involution and building more of $N_7$ by passing to $\mt{Co}_1$. 
Generators for $Y$ are included in \cite{ourfile}, so the reader wishing to reproduce our proof can recover the $72030$ involutions $j_2$ inverting $g_7$ via random search in $Y$. 
(Our generators for $3\udot\mt{Fi}_{24}$ are also included in \cite{ourfile}.)
\end{remark}

\begin{remark} \label{robL28}
After we finished our work, Rob Wilson made his notes \cite{robL28} dealing with the $\PSL_2(8)$ case publicly available; these notes provide an alternative proof of Theorem~\ref{thmL28}.
\end{remark}

{\small

}

\newpage
\appendix
\section{Code listings} \label{app:listings}

\noindent The various code listings referred to throughout the paper are collected here; cf. Remark~\ref{rem:mmgroup}. 
We briefly comment on the listings containing \textcode{mmgroup} elements. 
Although these objects are cryptic in appearance, each one uniquely defines an element of $\MM$, per the \textcode{mmgroup} documentation~\cite{sey_python}. 
We list several of them explicitly so that the reader wishing to verify our proofs can do so as easily as possible. 
As noted in e.g. Remark~\ref{remPSL2(13)}, some of our more intensive calculations are not easily documented, so additional supplementary code is included in our GitHub repository \cite{ourfile}. 
Given the effort that was required to construct some of our elements (see e.g. Remark~\ref{rem:N13}), we record them here for posterity.


\vspace{0.7cm}

{\footnotesize\begin{lstlisting}[captionpos=b,caption={{\sf GAP} code. 
Output~1 shows that the unique class of elements of order $13$ in $\GG$ lifts to the $\MM$-class $13\mt{B}$. 
Output~2 shows all classes in $\GG$ whose projection to the quotient $\GG/\QQ \cong \text{Co}_1$ lies in the class $3\mt{C}$. 
Output~3 shows that $4\mt{F}$ and $4\mt{G}$ are the only $\GG$-classes of order-$4$ elements whose character values in the $299$-dimensional irreducible representation of $\GG$ over $\mathbb{C}$ are $-13$. Outputs~4 and~5 demonstrate how to calculate class multiplication coefficients from the character table of $\MM$.\\},label=figgap1, frame=lines]   
chM    := CharacterTable("M");;
ch2Co1 := CharacterTable("2^(1+24).Co1");;
chCo1  := CharacterTable("Co1");;

fus   := FusionConjugacyClasses(ch2Co1,chM);;
pos13 := Positions(OrdersClassRepresentatives(ch2Co1),13);;
List(pos13, t-> [ClassNames(ch2Co1)[t],ClassNames(chM)[fus[t]]]);
### Output 1: [["13a","13b"]]

fus := FusionConjugacyClasses(ch2Co1,chCo1);;
nms := List([1..Size(fus)], t-> [ClassNames(ch2Co1)[t],ClassNames(chCo1)[fus[t]]]);;
Filtered(nms, x-> x[2]="3c");
### Output 2: [["6e","3c"],["3c","3c"],["6f","3c"],["12b","3c"]]

pos4 := Positions(OrdersClassRepresentatives(ch2Co1),4);;
c299 := Filtered(Irr(ch2Co1), x-> x[1]=299)[1];;
ClassNames(ch2Co1){Filtered(pos4, x-> c299[x]=-13)};
### Output 3: ["4f","4g"]

ClassNames(chM){[3,14,17,18]}; 
### Output 4: ["2b","6b","6e","6f"]
List([14,17,18], x-> ClassMultiplicationCoefficient(chM,3,3,x));
### Output 5: [14152320,466560,91530]
\end{lstlisting}}

{\footnotesize\begin{lstlisting}[captionpos=b,escapeinside={/*@}{@*/},caption={\textcode{mmgroup} commands, cf.\ \cite{sey_python}. 
The object \textcode{g} represents $g \in \MM$. 
In the final line, $\chi_\MM$ is the complex character of the irreducible $196883$-dimensional representation of $\MM$. 
Per \cite{Conway}, the associated module restricts as $196883 = 299 \oplus 98280 \oplus (24 \otimes 4096)$ to $\GG \cong \Gx$. 
The $299$ and the $98280$ are irreducible modules for $\GG$, the $4096 = 2^{12}$ is an irreducible module for the normal subgroup $\QQ \cong 2^{1+24}$ of $\GG$, and the $24$ is an irreducible module for $2 \udot \mt{Co}_1$. 
The characters $\chi_{299}$, $\chi_{24}$, and $\chi_{4096}$ correspond to the $299$, the $24$, and the $4096$. 
Note that $\chi_{299}$ is also a character for $\mt{Co}_1$, and that the $2$-modular Brauer character for $\mt{Co}_1$ acting on $2^{24}$ can be calculated from $\chi_{24}$.
},label=fig:mmgroup1, frame=lines]
MM("M<1>")               /*@ the identity element of $\MM$ in \textcode{mmgroup} @*/
MM("M<x_1000h>")         /*@ the central $2\mt{B}$-involution $z$ in the maximal subgroup $\GG = C_\MM(z) \cong \Gx$ @*/
MM('r','M')              /*@ produces a random element in $\MM$ @*/ 
MM('r','G_x0')           /*@ produces a random element in $\GG$ @*/
g.in_G_x0()              /*@ decides whether $g\in\GG$@*/
g.in_Q_x0()              /*@ decides whether $g$ lies in the normal subgroup $\QQ\cong 2^{1+24}$ of $\GG$ @*/
g.conjugate_involution() /*@ determines the class of an involution $g$; if $g\in 2\mt{B}$, returns $h\in\MM$ such that $g^h=z$@*/
g.chi_G_x0()             /*@ for $g \in \GG$ (only), returns character values $(\chi_\MM(g),\chi_{299}(g),\chi_{24}(g),\chi_{4096}(g))$@*/
\end{lstlisting}}

{\footnotesize
\begin{lstlisting}[language=python, captionpos=b,frame=lines, caption={Python/\textcode{mmgroup} code that produces a $24\times 24$ matrix describing the action of $g\in \GG \cong \QQ.{\rm Co}_1$ on $\QQ/Z(\QQ) \cong 2^{24}$; see also \cite{ourfile}.\\}, label=fig:mmgroup2]    
def elt_to_24_mat(g):
  mat = []
  for i in range(24):
    x = generators.gen_leech2_op_word_leech2(2**(23-i),g.mmdata,len(g.mmdata),0)
    mat.append([int(d) for d in format((x %2**24), '#026b')[2:]])
  return mat
\end{lstlisting}}


\vspace{0.3cm}

{\footnotesize
\begin{lstlisting}[language=Clean,captionpos=b,texcl=false,frame=lines,caption={Standard generators for $\GG \cong \Gx < \MM$ in  \textcode{mmgroup}; see also \cite{ourfile}.\\}, label=fig:std_ab]    
a = MM("M<y_2feh*x_51h*d_6f2h*p_199553794*l_2*p_1900800*l_2*p_684120>")
b = MM("M<y_32bh*x_0e4h*d_30fh*p_81928987*l_2*p_2880*l_1*p_21312*l_1*p_10455360>")
\end{lstlisting}}


\vspace{0.8cm}

{\footnotesize
\begin{lstlisting}[breaklines=true,language=Clean,captionpos=b,texcl=false,frame=lines,caption={
The elements of $\MM$ described in Propositions~\ref{prop_13a} and \ref{lem_13:6} in \textcode{mmgroup} format; see also \cite{ourfile}. 
The elements $g_{13}$ and $y_6$ lie in the $\MM$-classes $13\mt{A}$ and $6\mt{B}$, and $\langle g_{13},y_6 \rangle \cong 13{:}6$ with the cyclic $6$ acting faithfully on the cyclic $13$. 
The elements $c$ and $d$ generate a group $H \cong \PSL_3(3)$ that commutes with $\langle g_{13},y_6 \rangle$. 
The roles of the elements $x_6,x_3 \in H$ are described in Proposition~\ref{lem_13:6}.\\}, label=fig:std_PGL]  
g13 = MM("M<y_519h*x_0cb8h*d_3abh*p_178084032*l_2*p_2344320*l_2*p_471482*l_1*t_1*l_2*p_2830080*l_2*p_22371347*l_2*t_2*l_1*p_1499520*l_2*p_22779365*l_2*t_1*l_2*p_2597760*l_1*p_11179396*t_1*l_1*p_1499520*l_2*p_85838017*t_2*l_1*p_1499520*l_1*p_64024721*t_2*l_2*p_2386560*l_2*p_21335269>")

y6 = MM("M<y_4fh*x_1331h*d_0d46h*p_79853974*l_2*p_1943040*l_2*p_2398522*t_1*l_2*p_2344320*l_2*p_1858757*l_2*t_1*l_1*p_960*l_2*p_3120*l_2*p_517440*t_2*l_2*p_2597760*l_1*p_12132032*t_2*l_2*p_2880*l_1*p_465840*l_1*p_1565760*t_1*l_2*p_960*l_1*p_63994992*t_1*l_1*p_2027520*l_1*p_50146>")

c = MM("M<y_0fh*x_0bc4h*d_59h*p_207376512*l_2*p_1943040*l_2*p_22272232*l_2*t_1*l_1*p_1499520*l_1*p_22439*l_1*t_1*l_1*p_1394880*l_1*p_21456*l_2*p_4776960*t_2*l_1*p_1499520*l_2*p_53357227*t_1*l_2*p_960*l_2*p_10665792*l_1*p_6086400*t_1*l_2*p_1943040*l_2*p_64043939*t_2*l_2*p_2956800*l_1*p_64017049>")

d = MM("M<y_5a8h*x_0bcdh*d_941h*p_205645390*l_2*p_2830080*l_2*p_8690*l_2*t_1*l_2*p_1900800*l_2*p_10675420*t_1*l_2*p_2597760*l_1*p_42728016*t_2*l_2*p_2597760*l_1*p_10729207*t_1*l_2*p_1985280*l_1*p_21338086*t_2*l_2*p_2597760*l_1*p_21359269*t_1*l_1*p_1499520*l_1*p_42755907>")

x6 = MM("M<y_44eh*x_1906h*d_2d9h*p_173881751*l_1*p_2640000*l_1*p_1925314*l_1*t_1*l_1*p_2999040*l_1*p_2392772*l_1*t_1*l_1*p_1499520*l_1*p_32461673*l_1*t_1*l_2*p_2344320*l_2*p_84794*t_2*l_2*p_2956800*l_1*p_85413707*t_2*l_2*p_1985280*l_1*p_96477721*t_1*l_2*p_1985280*l_1*p_64023741>")

x3 = MM("M<y_492h*x_1fdeh*d_57ch*p_149571126*l_2*p_2787840*l_2*p_12996808*l_2*t_2*l_2*p_1943040*l_2*p_1463363*l_1*t_1*l_2*p_1394880*l_2*p_21312*l_1*p_1524480*t_2*l_2*p_2386560*l_2*p_21468896*t_2*l_1*p_1499520*l_1*p_956371*l_1*t_2*l_1*p_1920*l_2*p_467856*l_2*p_10315200>")
\end{lstlisting}}


\newpage

{\footnotesize
\begin{lstlisting}[breaklines=true,language=Clean,captionpos=b,texcl=false,frame=lines,caption={
Generators $g_{13}$, $g_6$, $i_2$, and $a_{12}$ for a maximal subgroup of $\MM$ isomorphic to $\PSL_2(13){:}2$ in \textcode{mmgroup} format; see also Proposition~\ref{thm_PGL} and \cite{ourfile}. Note that $g_{13}$ is the same element as in Listing~\ref{fig:std_PGL}, and that $g_6=y_6x_6$ with $y_6$ and $x_6$ as in Listing~\ref{fig:std_PGL}.\\}, label=fig:std_PGL213final]  
g13 = MM("M<y_519h*x_0cb8h*d_3abh*p_178084032*l_2*p_2344320*l_2*p_471482*l_1*t_1*l_2*p_2830080*l_2*p_22371347*l_2*t_2*l_1*p_1499520*l_2*p_22779365*l_2*t_1*l_2*p_2597760*l_1*p_11179396*t_1*l_1*p_1499520*l_2*p_85838017*t_2*l_1*p_1499520*l_1*p_64024721*t_2*l_2*p_2386560*l_2*p_21335269>")

g6 = MM("M<y_764h*x_590h*d_0bf6h*p_63465756*l_1*p_24000*l_2*p_528432*t_1*l_2*p_1457280*l_1*p_23214136*l_1*t_2*l_2*p_2344320*l_2*p_13038217*l_2*t_1*l_2*p_2956800*l_1*p_85332887*t_2*l_2*p_2830080*l_2*p_85335745*t_2*l_2*p_1900800*l_2*p_13472*t_2*l_2*p_2386560*l_2*p_85413728*t_1*l_2*p_2386560*l_2*p_53803593>")

i2 = MM("M<y_6ch*x_7ch*d_52ah*p_115885662*l_2*p_2787840*l_2*p_12552610*l_2*t_1*l_2*p_1900800*l_2*p_31998118*l_2*t_2*l_2*p_80762880*l_1*p_243091248*l_2*t_1*l_2*p_2597760*l_1*p_42794439*t_1*l_1*p_1394880*l_2*p_64015152*t_1*l_1*p_2027520*l_1*p_177984*t_1*l_2*p_79432320*l_1*p_161927136>")

a12 = MM("M<y_1afh*x_1661h*d_2ddh*p_208095583*l_2*p_1943040*l_2*p_1974295*l_2*t_2*l_2*p_1900800*l_2*p_10778*l_2*t_2*l_2*p_1900800*l_2*p_1868387*l_1*t_1*l_2*p_2956800*l_1*p_11159238*t_1*l_2*p_1985280*l_1*p_86275805*t_2*l_2*p_2386560*l_2*p_42712609*t_2*l_1*p_1499520*l_1*p_106699812>")
\end{lstlisting}}


\vspace{0.8cm}

{\footnotesize
\begin{lstlisting}[breaklines=true,language=Python,captionpos=b,texcl=false,frame=lines,
caption={Generators for $\mt{A}_5\times\mt{A}_{12} < \MM$ in \textcode{mmgroup} format; see also \cite{ourfile} and the proofs of Propositions~\ref{prop_A12} and~\ref{prop_A5}.}, 
label=fig:A5A12]
# generators for A12
x3 = MM("M<y_31h*x_0d92h*d_85ah*p_240874113*l_1*p_80762880*l_1*p_221802288*t_1*l_2*p_50160000*l_1*p_232003248*l_2*t_2*l_1*p_78988800*l_1*p_182328960*l_1*t_1*l_2*p_118018560*l_1*t_1*l_1*p_183216000*l_1>")

x10 = MM("M<y_491h*x_18h*d_77ah*p_179668320*l_1*p_68344320*l_2*p_159709440*l_2*t_1*l_1*p_70561920*l_2*p_242647728*l_2*t_1*l_1*p_79875840*l_1*p_182772480*l_1*t_1*l_1*p_4012800*l_2*t_1*l_2*p_117575040*l_1>")

# generators for A5 commuting with A12
a2 = MM("M<y_511h*x_19e5h*d_0f88h*p_175676956*l_2*p_127776000*t_2*l_1*p_60360960*l_1*p_193416960*l_2*t_1*l_1*p_69231360*l_2*p_162370608*l_2*t_2*l_1*p_67457280>")

a3 = MM("M<y_411h*x_158eh*d_64fh*p_160702030*l_2*p_1900800*l_2*p_684131*t_1*l_1*p_1499520*l_1*p_32064306*l_2*t_1*l_2*p_1394880*l_1*p_22320*l_2*p_98880*t_2*l_2*p_2830080*l_2*p_21469865*t_2*l_2*p_2830080*l_2*p_106661290*t_1*l_2*p_2597760*l_1*p_43613421*t_2*l_2*p_2830080*l_2*p_96456578>")

# generators for A5 < A12 with orbits of size 6 and 6 on 12 points
b2 = MM("M<y_599h*x_41ah*d_6b7h*p_240430467*l_1*p_70561920*l_1*p_140194560*t_1*l_1*p_81206400*l_2*p_169023408*l_1*t_1*l_2*p_79432320*l_2*p_212044848*l_2*t_1*l_2*p_59917440*l_1*p_157048416>")

b3 = MM("M<y_1eeh*x_15e7h*d_0d65h*p_141989494*l_1*p_59473920*l_2*p_131767728*l_2*t_2*l_2*p_50160000*l_2*p_179224368*l_2*t_2*l_1*p_71005440*l_1*p_243091248*l_1*t_2*l_1*p_58143360*l_2*p_179667936>")
\end{lstlisting}}


\newpage

{\footnotesize
\begin{lstlisting}[breaklines=true,language=Python,captionpos=b,texcl=false,frame=lines,
caption={Generators $g_2$ and $g_3$ for subgroups $A_\mt{G},A_\mt{T},A_\mt{B} \cong \mt{A}_5$ of $\MM$ in \textcode{mmgroup} format; see also \cite{ourfile}. 
In the third case, the $g_i$ are defined in terms of the elements $a_i$ and $b_i$ given in Listing~\ref{fig:A5A12}.
The elements $c_i$ generate the centraliser of $\langle g_2,g_3 \rangle$ in $\MM$. 
In the second and third cases, $i_2$ is a $2\mt{B}$-involution centralising the element $g_2g_3$ of order~$5$, and $h$ is an element conjugating $i_2$ to $z \in Z(\GG)$. 
See also the proof of Proposition~\ref{prop_A5}. 
The roles of the other elements in the ``type $\mt{G}$'' case are explained in Section~\ref{sec_U34}.}, 
label=fig:A5]
# type G
g2 = MM("M<y_4f6h*x_1f98h*d_0b7h*p_67615847*l_1*p_2999040*l_1*p_86264262*l_2*p_11172480>")

g3 = MM("M<y_4e1h*x_19cbh*d_9c8h*p_19643307*l_1*p_2999040*l_1*p_64003504*l_2*p_1478400>")

c2 = MM("M<x_1000h>")

c5 = MM("M<y_548h*x_34ah*d_0a9ch*p_243281095*l_1*p_1457280*l_2*p_43255315*t_2*l_1*p_3840*l_2*p_465936*l_2*p_1101120*t_1*l_2*p_2787840*l_2*p_32009429*l_1*t_2*l_2*p_2956800*l_1*p_64018007*t_1*l_2*p_2880*l_2*p_3120*l_2*p_2579520*t_2*l_2*p_2830080*l_2*p_42706069*t_1*l_2*p_2787840*l_2*p_148289>")

hu = MM("M<y_51h*x_319h*d_0d15h*p_65451314*l_2*p_2344320*l_2*p_23241234*l_1*t_1*l_1*p_2027520*l_1*p_33397753*l_1*t_1*l_2*p_1393920*l_1*p_10666752*l_1*p_3847680*t_2*l_2*p_1943040*l_2*p_21374615*t_1*l_2*p_2880*l_2*p_10666896*l_2*p_2959680*t_1*l_1*p_1499520*l_1*p_85838981*t_1*l_2*p_506880>")

hv = MM("M<y_152h*x_33ah*d_0f3h*p_183194050*l_2*p_71005440*l_2*p_85198272*t_1*l_2*p_2386560*l_2*p_2355444*l_1*t_1*l_2*p_1920*l_2*p_467712*l_2*p_10371840*t_2*l_2*p_2597760*l_1*p_96018816*t_2*l_2*p_1985280*l_1*p_21419925*t_1*l_2*p_1900800*l_2*p_135669*t_2*l_1*p_1499520*l_1*p_64088183>")

j2 = MM("M<y_538h*x_170fh*d_0a21h*p_129570785*l_1*p_1499520*l_2*p_12610123*t_2*l_1*p_2640000*l_1*p_1867433*l_1*t_2*l_2*p_1900800*l_2*p_2798997*l_2*t_2*l_1*p_1499520*l_2*p_127989729*t_2*l_2*p_2830080*l_2*p_21352704*t_2*l_2*p_1985280*l_1*p_127995457*t_1*l_2*p_2597760*l_1*p_64002648*t_1*l_2*p_1943040*l_2*p_96485380>")

a12 = MM("M<y_578h*x_309h*d_1f8h*p_113794596*l_1*p_2027520*l_1*p_10863009*t_1*l_1*p_2999040*l_1*p_1871269*l_2*t_1*l_1*p_1457280*l_2*p_12556433*l_1*t_1*l_1*p_2999040*l_1*p_6743*t_1*l_1*p_1499520*l_1*p_127990661*t_1*l_1*p_1858560*l_1*p_21408*l_2*p_240000*t_2*l_1*p_4797120*l_1*t_2*l_2*p_2830080*l_2*p_106700769>")


# type T
g2 = MM("M<y_82h*x_140eh*d_327h*p_130881367*l_1*p_80319360*l_1*p_131324208*l_1*t_1*l_1*p_69674880*l_2*p_160152960*l_1*t_1*l_1*p_48829440*l_1*p_230229120*l_2*t_1*l_1*p_70561920*l_1*p_87859296>")

g3 = MM("M<y_430h*x_0d4h*d_8a2h*p_242204766*l_2*p_60804480*l_2*p_11552640*l_2*t_1*l_2*p_49272960*l_1*p_172128000*l_2*t_1*l_1*p_59917440*l_1*p_239986560*l_1*t_2*l_2*p_3125760*l_2*t_1*l_2*p_47055360>")

c2 = MM("M<d_200h>")

c3 = MM("M<y_4cdh*x_1274h*d_499h*p_8151915*l_2*p_1900800*l_2*p_43255347*t_2*l_2*p_2597760*l_1*p_479249*l_2*t_2*l_1*p_4654080*t_1*l_2*p_2956800*l_1*p_53436116*t_2*l_2*p_2386560*l_2*p_85412773*t_1*l_1*p_1499520*l_1*p_106661296>")

i2 = MM("M<y_1d9h*x_1d53h*d_170h*p_157936168*l_2*p_68344320*l_2*p_202730880*l_2*t_1*l_1*p_78545280*l_1*p_212044848*l_2*t_2*l_1*p_80762880*l_2*p_149508480*l_2*t_1*l_1*p_81206400*l_1*p_85198176>")

h = MM("M<y_17eh*x_143ah*d_0c93h*p_48068830*l_2*p_2956800*l_1*p_43160055*t_2*l_2*p_1943040*l_2*p_1471043*l_1*t_2*l_1*p_1499520*l_1*p_32513830*l_1*t_1*l_2*p_2830080*l_2*p_85329986*t_2*l_2*p_1985280*l_1*p_96485399*t_1*l_2*p_2386560*l_2*p_85330945>")




# type B
g2, g3 = a2*b2, a3*b3

c2 = MM("M<y_15h*x_1c83h*d_955h*p_191219869*l_2*p_48829440*l_2*p_85198080*t_1*l_2*p_7560960*l_2*p_1795200*t_2*l_1*p_67013760*l_1>")

i2 = MM("M<y_487h*x_1426h*d_602h*p_173036153*l_2*p_47055360*l_1*p_53264640*t_1*l_1*p_60360960*l_1*p_182772480*l_2*t_1*l_1*p_59473920*l_2*p_192086400*l_2*t_1*l_1*p_3569280*l_2*t_2*l_2*p_66570240*l_1>")

h = MM("M<y_4f1h*x_9bch*d_0f77h*p_106507260*l_1*p_80762880*l_2*p_213375504*t_2*l_1*p_1499520*l_2*p_583047*t_2*l_2*p_1900800*l_2*p_1040998*t_2*l_2*p_2386560*l_2*p_21331401*t_1>")
\end{lstlisting}}


\vspace{0.8cm}

{\footnotesize
\begin{lstlisting}[breaklines=true,language=Python,captionpos=b,texcl=false,frame=lines,
caption={Generators for the subgroups $B_i = \langle e_i,g_7 \rangle \cong 2^3{:}7$ of $\GG$ in Proposition~\ref{prop_2^3:7}.}, 
label=fig:2^3:7]
e1 = MM("M<x_1920h*d_4c8h>")
e2 = MM("M<x_1d90h*d_5b0h>")
e3 = MM("M<x_0ec8h*d_3c5h>")
g7 = MM("M<y_5d3h*x_0a6dh*d_8d4h*p_111142481*l_1*p_2999040*l_1*p_43234193>")
\end{lstlisting}}

\end{document}